\documentclass[a4paper, 12pt, oneside]{amsart}

\usepackage[english]{babel}

\usepackage[a4paper,
            textwidth=155mm, textheight=215mm,
            hcentering,
            top=20.4mm, includehead]{geometry}

\usepackage{amsmath, amsfonts, amssymb, amscd, amsthm}
\usepackage{mathtools}
\usepackage{mathrsfs}
\usepackage[thinc]{esdiff}
\usepackage{xspace}

\usepackage{enumitem}

\usepackage{tikz}
\usetikzlibrary{calc}

\usepackage{xurl}   

\usepackage[hypertexnames=false, linktocpage=true]{hyperref}
\hypersetup{
  pdftitle  = {Fatou Components of Attracting Skew Products: From Polynomial Classification to Transcendental Complexity},
  pdfauthor = {Tom Potthink},
}
\usepackage{cleveref}

\usepackage{thmtools}
\usepackage{thm-restate}

\declaretheorem[name=Lemma,       Refname=Lemma,       parent=section]{lemma}
\declaretheorem[name=Theorem,     Refname=Theorem,     sibling=lemma]{theorem}

\declaretheorem[name=Claim,       Refname=Claim,       sibling=lemma]{claim}

\declaretheorem[style=definition, name=Definition, Refname=Definition, sibling=lemma]{definition}

\declaretheorem[style=remark, name=Remark, Refname=Remark, sibling=lemma]{remark}

\newenvironment{subproof}[1][\proofname]{%
  \begin{proof}[#1]%
}{%
  \end{proof}%
}

\DeclarePairedDelimiter\abs{\lvert}{\rvert}

\DeclarePairedDelimiter\norm{\lVert}{\rVert}
\DeclarePairedDelimiter\paren{(}{)}

\DeclarePairedDelimiter\set{\lbrace}{\rbrace}

\newcommand{\CC}{\mathbb{C}}
\newcommand{\DD}{\mathbb{D}}

\newcommand{\NN}{\mathbb{N}_{0}}

\newcommand{\RR}{\mathbb{R}}

\newcommand{\Chat}[1]{\widehat{\CC^{#1}}}
\newcommand{\PC}[1]{\mathbf{P}_{#1}{\mathbb{C}}}

\newcommand{\cF}{\mathcal{F}}

\newcommand{\F}{\mathcal{F}}
\newcommand{\J}{\mathcal{J}}

\newcommand{\dist}{\operatorname{dist}}
\newcommand{\Hol}{\operatorname{Hol}}
\newcommand{\id}{\operatorname{id}}
\newcommand{\im}{\operatorname{Im}}
\newcommand{\re}{\operatorname{Re}}
\newcommand{\wind}{\operatorname{wind}}
\newcommand{\interior}[1]{\mathring{#1}}

\newcommand{\e}{\varepsilon}

\usepackage{microtype}

\title[Fatou Components of Skew Products]{Fatou Components of attracting Skew Products: \\From Polynomial Classification \\to Transcendental Complexity}

\author{Tom Potthink}
\address{Tom Potthink, Univ. Bordeaux, CNRS, Bordeaux INP, IMB, UMR 5251, F-33400 Talence, France}
\email{tom.potthink@math.u-bordeaux.fr}

\subjclass[2020]{Primary 37F80; Secondary 30D05, 32H50, 37F10}
\keywords{Transcendental entire functions, polynomials, skew products, Fatou set}

\begin{document} 
	
\begin{abstract}
    We study the Fatou components of holomorphic skew products $F \colon \CC^2 \to \CC^2, F(z,w) = (f(z) + r(z,w), g(w))$ near an attracting invariant fiber, introducing a refined classification of components on the fiber: \emph{bulging}, \emph{partial bulging}, and \emph{nowhere bulging}.
    For polynomial skew products, we give a complete characterization of when the basin at infinity is non-bulging, and show that in this case non-bulging implies nowhere bulging, ruling out partial bulging entirely.
    In the transcendental setting, the behavior is richer: we construct partially bulging wandering domains and Baker domains, show that any orbitally unbounded Fatou component can be made nowhere bulging by choosing an appropriate function $r$, and prove that for any nonconstant $r$ there exists an entire function $f$ with an escaping wandering domain that fails to be bulging. 
    Bulging thus depends on the interplay between fiber dynamics and perturbation.
\end{abstract}
\maketitle  

\section{Introduction}
A skew product on $\CC^2$ is a holomorphic map $F \colon \CC^2 \to \CC^2$ of the form
\begin{equation*}
    F(z,w) = (f(z,w), g(w))
\end{equation*}
where $f\colon \CC^2 \to \CC$ and $g \colon \CC \to \CC$.
Note that the dynamics in the second coordinate is purely one-dimensional.

Innocent as they appear, the dynamics of skew products is far from trivial and shows features not present in one-dimensional dynamics.
For example, it was shown in \cite{Astorg2016} that there are polynomial skew products (i.e., skew products where both $f$ and $g$ are polynomial) with wandering domains.
This stands in contrast to Sullivan's No Wandering Domains Theorem \cite{Sullivan1985}, which implies that polynomials in one dimension cannot have wandering domains (see also the discussion in \cite{Dujardin2023}).

Wandering domains illustrate one way two-dimensional skew product dynamics departs from the one-dimensional theory.
This paper is concerned with the behavior near invariant fibers.
An \emph{invariant fiber} is a set $\set{w=c} \subset \CC^2$ where $c \in \CC$ is a fixed point of $g$.
On such a fiber, the skew product restricts to the one-dimensional function $F(z,c) = (f(z,c), c)$.
This raises the natural question of how the dynamics on the fiber and the full two-dimensional system relate to each other.
This setting has been explored by several authors, including Lilov \cite{Lilov2004}, Peters and Vivas \cite{Peters2016}, Peters and Smit \cite{Peters2018}, Peters and Raissy \cite{Peters2019}, Ji \cite{Ji2020,Ji2023}, and Ji and Shen \cite{Ji2026} (an overview can be found in \cite{Raissy2017}); the wandering domains of \cite{Astorg2016} also arise near an invariant fiber $\set{w = c}$, in their case with $g'(c) = 1$. 
Much of the focus has been on polynomial skew products near \emph{attracting fibers}, that is, fibers $\set{w=c}$ where $c$ is an \emph{attracting fixed point} of $g$ (i.e., $g(c)=c$ and $\abs{g'(c)}<1$). 
This paper will also focus on attracting fibers, but will treat transcendental skew products as well.

Since the dynamics is invariant under translation, we shall, without loss of generality, assume throughout that $c = 0$, and we write the skew product $F$ as
\begin{equation}\label{eq:skew-product-form}
    F(z,w) = (f(z) + r(z,w), g(w))
\end{equation}
where, with a slight abuse of notation, $f(z) \coloneqq f(z,0)$ denotes the fiber dynamics, known as the \textit{fiber map}, and $r\colon\CC^2\to\CC$ is a holomorphic function vanishing on the fiber $\set{w=0}$. 
Therefore, $r$ can be written as $r(z,w) = w h(z,w)$ for some holomorphic function $h$.
We refer to $r$ as the \emph{perturbation}, since it measures the deviation from the one-dimensional dynamics on the fiber.

As in one dimension, we define the Fatou set $\F(F)$ of $F$ as the locus of normality of its iterates.
Unlike in one dimension, however, this definition is not choice free: normality requires the target space to be compact, and there are several natural compactifications of $\CC^2$ to choose from, namely, the one-point compactification $\Chat{2}$, projective space $\PC{2}$, and the product $\Chat{} \times \Chat{}$.
In this paper, we work with the one-point compactification.
This choice is immaterial for the results presented here, as will be discussed in \Cref{sec:preliminaries}.
The Julia set $\J(F)$ is defined as the complement of the Fatou set in $\CC^2$.

From a geometric perspective, when investigating the dynamics near an invariant fiber, it is natural to study whether stability of a one-dimensional region is inherited by the full two-dimensional system. 
More precisely, we seek to understand under which conditions a one-dimensional Fatou component $U$ of $f$ on the invariant fiber is \emph{bulging}, meaning that $U\times\set{0}$ is contained in the two-dimensional Fatou set of $F$.
Bulging creates a direct link between the dynamics on the fiber and the full skew product near the fiber, potentially reducing the analysis to the well-understood one-dimensional case. 
For \textit{orbitally bounded} Fatou components (in the sense of \Cref{def:orbitally-unbounded}), bulging over an attracting invariant fiber is completely understood as every such component is bulging. 
This was shown by Lilov \cite{Lilov2004} for attracting basins and Siegel disks, and also for parabolic basins over a \emph{superattracting} fiber, that is, for $g'(0) = 0$.
The parabolic case for $g'(0) \neq 0$ was treated in \cite{Ji2020}. 
Consequently, the main objects of interest for this paper are \emph{orbitally unbounded} Fatou components, that is, those with a subsequence of iterates diverging to infinity (see \Cref{def:orbitally-unbounded}).

We refine the notion of bulging with the following definition.
\begin{definition}\label{def:bulging}
    Let $F$ be a skew product as in \eqref{eq:skew-product-form}, and let $U\subset \CC$ be a Fatou component of $f$.
    We say that $U$ is
    \begin{enumerate}
        \item \emph{bulging} if $U \times \set{0} \subset \F(F)$, and
        \item \emph{non-bulging} if $U \times \set{0} \not\subset \F(F)$.
    \end{enumerate} 
    If $U$ is non-bulging, we call it
    \begin{enumerate}
        \setcounter{enumi}{2}
        \item \emph{nowhere bulging} if $U \times \set{0} \subset \J(F)$, and
        \item \emph{partially bulging} if both $(U\times \set{0}) \cap \J(F) \neq \emptyset$ and $(U \times \set{0}) \cap \F(F) \neq \emptyset$. 
    \end{enumerate}
\end{definition}
Observe that the trichotomy of bulging, partial bulging, and nowhere bulging exhausts all possibilities for a Fatou component.
The case of partial bulging appears to be the most intricate, as for both bulging and nowhere bulging, the set $U\times\set{0}$ lies completely within either the Fatou set or the Julia set of $F$.
The three cases are visualized in Figure~\ref{fig:bulging-cases}.

\begin{figure}[ht]
    \centering
    \begin{tikzpicture}
        \newcommand{\panelbase}[1]{%
            \draw (0,-2.1) -- (0,1.9) node[above] {$z$};
            \node at (0,-2.3) {$\set{w=0}$};
            \draw (-1.2,-1.75) -- (1.2,-1.75) node[right] {$w$};
            \draw[line width=1.2pt] (0,-1.3) -- (0,1.3);
            \draw[line width=1.2pt] (-0.05,-1.3) -- (0.05,-1.3);
            \draw[line width=1.2pt] (-0.05,1.3) -- (0.05,1.3);
            \draw[thin] (-0.05,0.95) -- (-0.9,1.55) node[left] {$U$};
            \node at (0,-2.9) {#1};
        }

        \begin{scope}
            \draw[fill=gray!25, draw=gray!60] (0,0) ellipse (0.9 and 1.3);
            \node at (0.48,0) {$\Omega$};
            \panelbase{(a) bulging}
        \end{scope}

        \begin{scope}[xshift=4.6cm]
            \draw[fill=gray!25, draw=gray!60] (0,0.75) ellipse (0.8 and 0.55);
            \draw[fill=gray!25, draw=gray!60] (0,-0.75) ellipse (0.8 and 0.55);
            \node at (0.45,0.75) {$\Omega_1$};
            \node at (0.45,-0.75) {$\Omega_2$};
            \panelbase{(b) partial bulging}
        \end{scope}

        \begin{scope}[xshift=9.2cm]
            \panelbase{(c) nowhere bulging}
        \end{scope}
    \end{tikzpicture}
    \caption{Schematic of the three cases for a Fatou component $U$ of $f$ on the invariant fiber $\set{w=0}$.
    The thicker segment of the fiber is $U \times \set{0}$, and the shaded regions $\Omega$, $\Omega_1$, and $\Omega_2$ represent Fatou components of $F$.}
    \label{fig:bulging-cases}
\end{figure}

The present paper has three goals.
First, we complete the classification of bulging Fatou components for polynomial skew products by giving a characterization of the bulging of the basin at infinity.
Second, we explore the intricacies that partial bulging and nowhere bulging reveal for transcendental skew products, and give the first examples of partially bulging Fatou components.
Finally, we complement an earlier result from \cite{Potthink2026} to stress the dependence of bulging on the interaction between the fiber dynamics and the perturbation.

In the recent papers \cite{Benini2026} and \cite{Potthink2026}, it was already shown that the connection between the fiber dynamics and the full dynamics becomes significantly more intricate for orbitally unbounded Fatou components.
More precisely, in \cite{Benini2026}, the first example of a non-bulging Fatou component on an attracting fiber was given.
In the later paper~\cite{Potthink2026}, it was shown that for every entire function $f$ with an orbitally unbounded Fatou component $U$ and every nonconstant entire $g$ with a fixed point at $0$, there exists a perturbation $r$ such that $U$ is non-bulging for the resulting skew product given by~\eqref{eq:skew-product-form}. 
Furthermore, in \cite{Potthink2026}, examples of polynomial perturbations $r$ are given for which the basin at infinity of a polynomial $f$ fails to be bulging.
These papers did not address the finer structure of non-bulging, nor whether the polynomial case admits a complete classification.

In this paper, we first complete the analysis for polynomial skew products by characterizing when the basin at infinity is non-bulging in terms of the degree of $f$, the degree of $r$ in $z$ (see \Cref{def:polynomial-in-z}), and the multiplicity of $g$ at its attracting fixed point $0$.
Furthermore, we show that in the case of non-bulging, the basin at infinity is nowhere bulging.
In particular, for such skew products, the basin at infinity is never partially bulging.
Our first result is the following.

\begin{restatable}{theorem}{basin}\label{thm:basin-inf-classification}
    Let $f \colon \CC \to \CC$ be a polynomial of degree $d \geq 2$, let $r \colon \CC^2 \to \CC$ be a polynomial in $z$ of degree $\ell$ vanishing on $\set{w=0}$, and let $g$ be a nonconstant entire function with an attracting fixed point at $0$, where $m \geq 1$ denotes the multiplicity of $g$ at $0$.
    Consider the skew product
    \begin{equation*}
        F(z,w) = (f(z) + r(z,w), g(w)).
    \end{equation*}
    Then the basin at infinity of $f$ is non-bulging for $F$ if and only if $m < d < \ell$.
    Moreover, if the basin is non-bulging, then it is nowhere bulging.
\end{restatable}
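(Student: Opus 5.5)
The proof splits into two directions: bulging when $d \le m$ or $d \ge \ell$, and nowhere bulging when $m < d < \ell$. Throughout, let $A$ denote the basin at infinity of $f$. Write $r(z,w) = \sum_{j=0}^{\ell} a_j(w) z^j$ with $a_j(0)=0$ and $a_\ell \not\equiv 0$. Set $k_j \ge 1$ for the vanishing order of $a_j$ at $0$. Near $0$ we have $g(w) = b w^m(1+o(1))$ if $m\ge 2$, and $g(w)=\lambda w + O(w^2)$ with $0<|\lambda|<1$ if $m=1$. Write $w_n = g^n(w)$. Then $|w_n|$ decays geometrically when $m=1$ and like $\exp(-C m^n)$ when $m \ge 2$. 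The whole argument compares this decay with the growth $|z_n| \approx \exp(G(z) d^n)$ on $A$, where $G$ is the Green function of $f$.

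\textbf{Bulging case.} Fix $z_0 \in A$ and a small bidisk $D \times \{|w|<\delta\}$ around $(z_0,0)$. I aim to show that $F^n \to \infty$ locally uniformly there, which gives normality. Choose a large radius $R$ with $|f(z)| \ge 2|z|$ and $|f(z)| \ge c|z|^d$ for $|z| \ge R$. Since $z_0 \in A$, some iterate $f^N(D)$ lies in $\{|z| > 2R\}$, and by continuity the first coordinates of $F^N$ lie there for $|w|$ small. It then suffices to prove invariance of a region of the form $\{|z| > R',\ |w| < \delta'\}$ on which $|z_{n}| \to \infty$.

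If $\ell \le d$, this region can be taken with $R'$ large and $\delta'$ small: there $|f(z)| \ge c|z|^d$ dominates $|r(z,w)| \le C\delta' |z|^\ell$. The case $\ell = d$ needs $\delta'$ small relative to $c$.

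If $d \le m < \ell$ (or more generally $d\le m$), I track the orbit. The perturbation is at most $C|w_n| |z_n|^\ell$. With $|z_n| \le \exp(K d^n)$ and $|w_n| \le \exp(-C' m^n)$, where $C' \to \infty$ as $\delta \to 0$, the product $|w_n||z_n|^{\ell}$ stays below $\tfrac12 |f(z_n)|$ provided $C' > \ell K$. When $d=m$ this is a genuine comparison of constants: shrinking $\delta$ makes $C'$ large, while $K$ is controlled by $z_0$ and $R$. An inductive estimate then closes. The case $m=1<d$ does not occur here, since there $m<d$.

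\textbf{Non-bulging case, $m<d<\ell$.} I must show that every point $(z_0,0)$ with $z_0 \in A$ lies in $\J(F)$. The idea is that the perturbation eventually dominates the fiber map. Along the fiber, $z_n = f^n(z_0)$ has $\log|z_n| \sim G(z_0) d^n$. For a small $w \neq 0$, $\log|w_n|$ behaves like $-c\, m^n$ (or $-cn$ if $m=1$), which is much slower than $d^n$. Hence $|w_n| |z_n|^{\ell-d} \to \infty$, and the first coordinate becomes governed by $a_\ell(w_n) z^\ell$. The plan is to show that arbitrarily close to $(z_0,0)$ there are both points escaping to $\infty$ and points whose orbits stay bounded, or at least points whose first coordinate returns to a bounded set. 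Normality near $(z_0,0)$ would force the limit functions to be $\infty$, since the fiber orbit escapes. To produce bounded orbits, I fix $w$ small and consider the one-variable non-autonomous family $z \mapsto f(z)+r(z,w_n)$. Its $\ell$ roots and large critical structure near $|z|\approx |w_n|^{-1/(\ell-d)}$ create preimages of a fixed disk. An argument-principle or winding-number step, using the paper's $\wind$ machinery, should place such preimages in any small neighborhood $D\times\{|w|<\delta\}$: for fixed $w$, the maps $z\mapsto \pi_1 F^n(z,w)$ on $D$ have images covering huge annuli, so they hit a bounded absorbing region. Montel-type non-normality then follows. Because $z_0$ is arbitrary in $A$, this gives nowhere bulging.

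\textbf{Main obstacle.} The hard step is this last one: showing that for $(z,w)$ near $(z_0,0)$, some $F^n$ maps into a bounded set, with the branch switch from $f$-dominated to $r$-dominated growth handled uniformly. I would first try an explicit domination estimate. At the first time $n$ when $|a_\ell(w_n)| |z_n|^{\ell} \gg |f(z_n)|$, the map $z \mapsto \pi_1 F^{n+1}$ restricted to a small disk in $D$ has winding number large around $0$ on a circle, hence has a zero, hence returns near the origin. This yields non-normality directly, without needing full bounded orbits.
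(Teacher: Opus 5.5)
Your bulging direction is correct and essentially the paper's argument: for $\ell\le d$ you use an expanding region $\{\abs{z}>\rho\}\times D(0,\delta)$, and for $d\le m$ you track $\abs{z_n}\lesssim e^{Kd^n}$ on a bounded annulus against $\abs{w_n}\lesssim\delta^{m^n}$, followed by the absorbing-domain step. The gap is in the non-bulging direction, which is the real content of the theorem.

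\textbf{The winding-number step fails in the $z$-variable.} For fixed $w$, the point $w_n=g^n(w)$ does not depend on $z$. On a small circle $\partial\Delta\subset D$, the curve $z\mapsto\pi_zF^n(z,w)$ has winding number $0$ about $0$. That winding number equals the number of zeros of this holomorphic function in $\Delta$, and there are none, because up to the switching time $\abs{z_n}$ is huge on $\overline{\Delta}$. So if one monomial $c(w_n)z_n^{j}$ dominates on all of $\partial\Delta$, you get $\wind(\pi_zF^{n+1}(\cdot,w),0)=j\cdot 0=0$, not a large number. If it dominates only on part of the circle, which is typical since $\log\abs{z_n}$ varies enormously across $\Delta$, you get no count at all. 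The fact that $f^n(\Delta)$ covers a huge annulus does not put $0$ in the image. You would have to show that $\pi_zF^n(\Delta,w)$ contains one of the large roots of $\zeta\mapsto f(\zeta)+r(\zeta,w_n)$, which is a separate covering argument you have not supplied, and it would also require tuning $w$.

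The paper takes the positive winding from the other variable. It fixes $z_0$ with $\abs{z_0}\ge R$ and lets $w$ run over $\abs{w}=\delta$. Then $\wind(w_n,0)=m^n$, and $\wind(z_n,0)\ge 0$ by the argument principle for $w\mapsto\pi_zF^n(z_0,w)$. The dominant part $z_n^{d+\hat\mu}w_n^{\hat\nu}b_{\hat\mu}(w_n)(1+\phi)$ therefore winds at least $\hat\nu m^n>0$ times. A homotopy (Rouch\'e) argument then gives $\widetilde w\in D(0,\delta)$ with $\pi_zF^{n+1}(z_0,\widetilde w)=0$. Nowhere bulging at an arbitrary $\zeta$ in the basin needs one more step, a pull-back through $F^N$. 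By continuity of zeros, $F^N(D_1\times D_2)\supset\{f^N(\zeta)\}\times\psi(D_2)$, which transfers the conclusion to every neighborhood of $(\zeta,0)$.

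\textbf{The dominant term is misidentified.} The term that takes over is in general not $a_\ell(w)z^\ell$, and $\abs{w_n}\abs{z_n}^{\ell-d}$ is not the relevant quantity. Write $a_{d+\mu}(w)=w^{\nu}b_\mu(w)$ with $b_\mu(0)\neq0$. Then $\abs{w_n}^{\nu}\abs{z_n}^{\mu}=(\abs{w_n}\abs{z_n}^{\mu/\nu})^{\nu}$. So the first term to beat $\abs{z_n}^d$ is the one maximizing the slope $\mu/\nu$, with ties broken by the largest $\mu$. For example, for $r=wz^{d+1}+w^{100}z^{d+2}$ the $z^{d+1}$ term takes over long before your criterion ``$\abs{a_\ell(w_n)}\abs{z_n}^{\ell}\gg\abs{f(z_n)}$'' is met. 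By then the orbit no longer follows $f$.

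Rouch\'e also needs a \emph{lower} bound on the part of $r(z_n,w_n)$ of degree $>d$ in $z$, with no cancellation among comparable terms. This forces a two-sided choice $C_2^{d^n}\abs{z_0}^{-\hat q d^n}<\delta^{m^n}<C_1^{d^n}\abs{z_0}^{-\sigma d^n}$. Here $\hat q$ is the maximal slope, $q<\hat q$ the next one, and $\sigma=\max\{q,\tfrac{m}{d}\hat q\}<\hat q$.
\begin{itemize}
\item The upper bound keeps the perturbation negligible through step $n-1$; this is where $m<d$ is used. It also keeps lower-slope terms subordinate at step $n$.
\item The lower bound makes the dominant term beat both $f$ and the equal-slope terms.
\end{itemize}
This bookkeeping is missing from your plan, and it is what the non-bulging direction actually requires.
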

The characterizing inequality $m < d < \ell$ reflects the different growth behaviors of $f$ and $r$ and the contraction of $g$ near $0$.
Note that \Cref{thm:basin-inf-classification} does not require $F$ to be a polynomial skew product.
Only the fiber map $f$ needs to be a polynomial and $r$ needs to be polynomial in $z$, while the dependence of $r$ on $w$, as well as the function $g$, may be transcendental.
Combined with Lilov's and Ji's results that orbitally bounded periodic Fatou components on attracting fibers are always bulging and with Sullivan's Theorem on the absence of wandering domains for polynomials, this completes the classification of bulging Fatou components for polynomial skew products, since the basin at infinity is the only orbitally unbounded Fatou component of a polynomial.

The second part of the paper is devoted to contrasting this result with the transcendental setting, which is more nuanced than a simple dichotomy between nowhere bulging and bulging, admitting the intermediate state of partial bulging. 
In \Cref{thm:partial-wandering} and \Cref{thm:partial-baker}, we construct explicit examples of a partially bulging wandering domain and a partially bulging Baker domain.
We also show that any orbitally unbounded Fatou component is nowhere bulging for an appropriate perturbation, strengthening \cite[Theorem 1.1]{Potthink2026} from non-bulging to nowhere bulging.
More precisely, we prove the following.

\begin{restatable}{theorem}{nowhere}\label{thm:nowhere-bulging}
    Let $f$ be an entire function with an orbitally unbounded Fatou component $U \subset \CC$.
    Furthermore, let $g$ be a nonconstant entire function with $g(0) = 0$.
    Then there exists an entire function $h$ such that $U \times \set{0}$ lies in the Julia set of the skew product $F(z,w) = (f(z) + wh(z), g(w))$.
\end{restatable}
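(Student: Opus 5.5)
Fix a point $z_0\in U$ and a subsequence $n_k$ with $f^{n_k}(z_0)\to\infty$; by normality on $U$ we may assume $f^{n_k}\to\infty$ locally uniformly on $U$. The strategy is to make nearby points in $(z,w)$ with $w\neq 0$ tiny behave very differently from points on the fiber. Concretely, we show that every point $(z_1,0)$ with $z_1\in U$ is a limit of points $(z_1,w_j)$, $w_j\to 0$, whose orbit under $F$ stays bounded along a subsequence of the times $n_k$. Since $F^{n_k}(z_1,0)=(f^{n_k}(z_1),0)\to\infty$, this contradicts normality at $(z_1,0)$, because any limit map would have to be continuous at $(z_1,0)$. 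So $U\times\set{0}\subset\J(F)$. To get this for every point of $U$ simultaneously, fix a countable dense set $\set{\zeta_j}\subset U$ and an exhaustion of $U$ by compact sets $K_j$. Since the Julia set is closed, it suffices to handle points in a dense subset, so the $\zeta_j$ are enough.

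We build $h$ as a convergent series $h=\sum_k h_k$ of entire functions using Runge/Arakelian-type approximation, in the spirit of the construction in \cite{Potthink2026}. Write $\pi_1$ for the first-coordinate projection and $z_k=f^{n_k}(\zeta_{j(k)})$ for a chosen enumeration $j(k)$ that visits each $j$ infinitely often. The orbit on the fiber leaves every compact set. Choose small $w^{(k)}\neq0$ so that $g^{n}(w^{(k)})$ stays in the attracting region and $g^{n_k}(w^{(k)})\neq 0$. (If $g$ is locally injective near $0$, this holds automatically for $w^{(k)}\neq 0$ small. If $g$ has some multiplicity $m\ge 2$ at $0$, we still have $g^n(w)\neq0$ for generic small $w$.) The point $(\zeta_{j(k)},w^{(k)})$ is then shadowed by the fiber orbit up to time $n_k-1$, provided the earlier pieces $h_1,\dots,h_{k-1}$ are already fixed and $w^{(k)}$ is small enough, by continuity. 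At step $n_k$ we want the $z$-coordinate, namely $f(z')+g^{n_k-1}(w^{(k)})h(z')$ with $z'\approx f^{n_k-1}(\zeta_{j(k)})$, to land in a fixed bounded disk $D$ such as $\DD$. We require $h_k(z')\approx -f(z')/g^{n_k-1}(w^{(k)})$ near the point $z'$, which lies far out since the orbit escapes. We also require $h_k$ to be tiny on a large disk containing all points used in earlier steps and on the compact parts of the earlier orbits that must be preserved. This is a Runge approximation of the function equal to $0$ on a large disk and to the prescribed constant on a small disk around $z'$. The two sets are disjoint with connected complement of their union, since $z'$ lies outside the large disk because the orbit tends to $\infty$.

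The main obstacle is bookkeeping: the later perturbations $h_{k'}$, $k'>k$, must not destroy what was achieved at step $k$. Their smallness must be controlled against the possibly huge derivative of $F^{n_k}$ along the finite orbit piece of $(\zeta_{j(k)},w^{(k)})$. We handle this by choosing $n_{k+1}$ (a later element of the subsequence) so that $f^{n}(\zeta_j)$ for the relevant times leaves the disk $B_{k}$ of radius $R_k$ containing all previously used points. We then choose $\e_{k+1}$ small enough, using continuity of $F^{n_k}$ in $h$ on the finite orbit segment, so that $F^{n_k}(\zeta_{j(k)},w^{(k)})$ still lies in $2D$ after all remaining perturbations with $\sum_{k'>k}\e_{k'}$ are added.

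One subtlety is that the orbit point $f^{n_k-1}(\zeta_{j(k)})$ might come close to earlier orbit points. We avoid this by passing to a sparser subsequence using $f^{n}\to\infty$ locally uniformly, so that the new point lies outside $B_k$. If $f^{n}$ only tends to $\infty$ along a subsequence, then orbit points at intermediate times may return to $B_k$; this is harmless, since on $B_k$ the perturbation $h_{k+1}$ is tiny and we only need continuity there. Finally we may take $w^{(k)}\to0$ as $k\to\infty$. Then $(\zeta_j,w^{(k)})\to(\zeta_j,0)$ along the $k$ with $j(k)=j$, while the orbits remain in $2D$ at times $n_k$ and the fiber orbit escapes, which gives the non-normality and completes the proof.
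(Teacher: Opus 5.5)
Your overall strategy works, and in places it is leaner than the paper's, but the approximation step as you state it has a gap. At step $k$ you approximate $0$ on a large disk $B_{k-1}$ holding the data of earlier steps, and a constant of size about $\abs{f(z')}/\abs{g^{n_k-1}(w^{(k)})}$ on a small disk around $z'\approx f^{n_k-1}(\zeta_{j(k)})$, and nothing else.

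The \emph{current} test orbit, however, must shadow the fiber orbit at every time $0\le i\le n_k-2$. The points $f^i(\zeta_{j(k)})$ need not lie in $B_{k-1}$. For a Baker domain or an escaping wandering domain, all but finitely many of them lie outside it, possibly farther out than $z'$. On those points $h_k$ is unconstrained, so $g^i(w^{(k)})h_k$ can be large there, and the orbit need not reach the disk around $z'$ at time $n_k-1$. You cannot repair this afterwards by shrinking $w^{(k)}$, because the prescribed constant depends on $w^{(k)}$. Your remark that intermediate points returning to $B_k$ are harmless covers the benign case, not this one.

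The fix is what the paper does with the disks $\Delta^j_{m,k}$, $j\le n_k-1$, built into $H_k$:
\begin{itemize}
\item Add small pairwise disjoint closed disks around $f^i(\zeta_{j(k)})$, $0\le i\le n_k-2$, to the Runge set, with the previous function as target.
\item These can be chosen disjoint from the disk around $f^{n_k-1}(\zeta_{j(k)})$, because $\zeta_{j(k)}$ is not preperiodic (its orbit is unbounded), and the resulting compact set still has connected complement.
\item Choose the radii so that $f$ varies by less than a fixed small amount on each disk.
\end{itemize}
There are two smaller repairs. First, $h_1+\dots+h_{k-1}$ is uncontrolled at $z'\notin B_{k-1}$, so the target near $z'$ must compensate for it; it is cleaner to approximate the whole partial sum, as the paper does, rather than the increment. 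Second, the theorem does not assume $0$ is attracting for $g$, so replace ``attracting region'' by continuity of the finitely many iterates $g^i$, $i\le n_k$.

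Once repaired, your route differs from the paper's in a useful way. The paper perturbs at time $n_k$ and tests at time $n_k+1$. There $f^{n_k+1}$ may have finite limits on $U$ (for instance on oscillating wandering domains), so the paper needs two families of test points: $w_k$, sent into the unit ball, and $w'_k$, sent beyond radius $k$. These exclude uniform divergence and a finite limit respectively.

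You test at time $n_k$, where the fiber orbit diverges, so only uniform divergence is possible and one family suffices. This requires perturbing at time $n_k-1$. The reason that point is far out is not ``the orbit escapes'' but continuity of $f$: if $f^{n_k-1}(\zeta)$ had a subsequence converging to some $a\in\CC$, then $f^{n_k}(\zeta)\to f(a)$ along it.

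Treating one point per step also has two advantages. Each $\zeta_j$ is revisited infinitely often, and you apply normality to the subsequence $\set{n_k \mid j(k)=j}$, so you no longer need a dense set with pairwise disjoint orbits. Closedness of $\J(F)$ then replaces the paper's argument with the neighborhoods $\set{q_\kappa}\times D(0,\rho)$.
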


Note that \Cref{thm:nowhere-bulging} does not require $0$ to be an attracting fixed point of $g$, but holds for any invariant fiber $\set{w = 0}$.
For attracting fibers, \Cref{thm:nowhere-bulging} yields examples of nowhere bulging Baker domains and wandering domains.
Hence, in the transcendental setting, orbitally unbounded Fatou components can be bulging (see \cite{Benini2026} and \cite{Potthink2026}), partially bulging, and nowhere bulging.

Lastly, in \Cref{thm:tame-perturbation}, we give a statement complementary to \cite[Theorem 1.1]{Potthink2026}, recalled above: whereas there the perturbation $r$ is constructed for given $f$ and $g$, we show, conversely, that given any nonconstant perturbation $r$ and any nonconstant entire $g$ with an attracting fixed point at $0$, there exists an entire function $f$ with an escaping simply connected wandering domain that is non-bulging for the resulting skew product.
In particular, even perturbations that appear well-behaved---for instance, those depending only on $w$---can lead to non-bulging.
The construction relies on results from \cite{Benini2022} and works even when geometric restrictions, in the form of encapsulating Jordan domains, are imposed on the wandering domain (as described in \Cref{thm:tame-perturbation}).
Together with the earlier result from \cite{Potthink2026}, this shows that the bulging of orbitally unbounded Fatou components depends on a nuanced interaction between the fiber dynamics and the perturbation.

\subsection*{Outline of the paper}
In \Cref{sec:preliminaries}, we review several preliminary notions required for the rest of the paper, including results from complex approximation theory.
In \Cref{sec:basin-at-infinity}, we provide the proof of \Cref{thm:basin-inf-classification}.
In \Cref{sec:nowhere-bulging}, we prove the nowhere bulging result for transcendental skew products.
The examples for partial bulging are given in \Cref{sec:partial-bulging}.
Lastly, in \Cref{sec:tame-perturbation}, we construct an entire function $f$ producing a non-bulging wandering domain for any given nonconstant perturbation.

\subsection*{Notation} 
$\NN$ denotes the nonnegative integers.
$\DD$ denotes the open unit disk in $\CC$.
We denote by $A^c$ the complement of a set $A \subset \CC$, and by $\abs{A} \coloneqq \set{\abs{a} \mid a \in A}$ its elementwise absolute value.
Addition and multiplication of sets $A,B \subset \CC$ are understood to be elementwise, i.e., $A + B = \set{a + b \mid a \in A \text{ and } b\in B}$ and $AB = \set{ab \mid a \in A \text{ and } b\in B}$.
We denote by $\pi_z \colon \CC^2 \to \CC, \quad \pi_z(z,w) = z$ and $\pi_w \colon \CC^2 \to \CC, \quad \pi_w(z,w) = w$ the projections onto the $z$- and $w$-coordinates respectively.
Additionally, where there is no risk of confusion, we write $(z_k, w_k) \coloneqq F^{k}(z,w)$ for $k \in \NN$, $(z,w) \in \CC^2$, and $F\colon \CC^2 \to \CC^2$.
Given a function $\phi$ defined on a set $E \subset \CC$, we write $\norm{\phi}_{E} \coloneqq \sup_{z\in E}\abs{\phi(z)} \in [0,\infty]$. 
Since all norms on $\CC^2$ are equivalent, we use $\norm{(z,w)} = \abs{z} + \abs{w}$ for $(z,w) \in \CC^2$.
Given two complex manifolds $X$ and $Y$, we denote by $\Hol(X,Y)$ the space of all holomorphic maps from $X$ to $Y$. 
Lastly, we use $\dist(A,B)$ to denote the distance between two sets $A$ and $B$, where points are treated as singleton sets and curves as their images.

\subsection*{AI statement}
All mathematical results and proofs in this work are due to the author. 
During the preparation of this work the author used Claude Opus 5.5 (Anthropic) to scan the proofs for potential errors, to suggest minor improvements to the language, and to produce a first draft of the TikZ code for the figures. 
All issues raised were independently checked by the author, and any necessary corrections were made by the author, who takes full responsibility for the content of this publication.

\subsection*{Acknowledgments}
I am grateful to my supervisor, Jasmin Raissy, for her guidance and many helpful discussions.

This project was supported in part with funding from the ANR TIGerS/ANR-24-CE40-3604 and the Institut Universitaire de France (IUF).

The author would like to thank the Isaac Newton Institute for Mathematical Sciences, Cambridge, for support and hospitality during the programme Complex dynamics: interactions and influences, where work on this paper was undertaken. This work was supported by EPSRC grant EP/Z000580/1.

\section{Preliminaries}\label{sec:preliminaries}

First, we recall the following definitions and statements from one-dimensional dynamics.

Given an entire function $f$, the \emph{Fatou set} $\F(f)$ of $f$ is defined as the set of normality of the iterates $\lbrace f^k \mid k \in \NN \rbrace$ viewed as functions in $\Hol(\CC, \Chat{})$, where $\Chat{} = \CC \cup \set{\infty}$ is the Riemann sphere.
The connected components of the Fatou set are called \emph{Fatou components} of $f$. 
Given a Fatou component $U \subset \CC$ of $f$, the image $f^n(U)$ is contained in some Fatou component $U_n$ of $f$ for every $n \in \NN$, and we call $U$:
\begin{enumerate}
    \item \emph{periodic} if there exists $n \geq 1$ such that $U = U_n$, and \emph{invariant} if this holds for $n = 1$,
    \item \emph{preperiodic} if $U_n = U_m$ for some $n \neq m$, and
    \item \emph{wandering} if $U_n \neq U_m$ for all $n \neq m$.
\end{enumerate}

The Fatou Classification Theorem (see, for instance, \cite[Theorem 6]{Bergweiler1993} for more details) gives a complete classification for (pre-)periodic Fatou components. 
In particular, if $U$ is an invariant Fatou component of an entire function $f$, then either:
\begin{enumerate}
    \item[(i)] $U$ is an \emph{immediate attracting basin}, meaning that all points in $U$ converge to an \emph{attracting fixed point} $z_0 \in U$, that is, a fixed point of $f$ which satisfies $\abs{f'(z_0)}<1$, or
    \item[(ii)] $U$ is the \emph{basin at $\infty$}, meaning that $f^n \to \infty$ locally uniformly on $U$ as $n\to\infty$, where $f$ is a polynomial, so that $\infty$ is a \emph{superattracting fixed point} of $f$, i.e., $\phi(w) \coloneqq 1/f(1/w)$ satisfies $\phi(0)=0$ and $\phi'(0)=0$, or
    \item[(iii)] $U$ is a \emph{parabolic domain}, that is, all points in $U$ converge to a fixed point $z_0 \in \partial U$ which satisfies $f'(z_0) = 1$, or
    \item[(iv)] $U$ is a \emph{rotation domain}, that is, there exists a subsequence $n_k \xrightarrow{k\to\infty} \infty$ such that $f^{n_k} \to \id$ locally uniformly on $U$ as $k \to \infty$, or
    \item[(v)] $U$ is a \emph{Baker domain}, that is, $f^{n}(z) \to \infty$ for all $z \in U$ as $n \to \infty$ where $f$ is transcendental.         
\end{enumerate}

The No Wandering Domains Theorem due to Sullivan \cite{Sullivan1985} states that for rational functions of degree $d\geq 2$, every Fatou component is preperiodic. Therefore, for rational functions of degree $d\geq 2$, we have a complete description of the dynamics in the Fatou set. 
On the other hand, prior to Sullivan's result, the first example of an entire function with a wandering domain was found by Baker \cite{Baker1976}.
Wandering domains are called \emph{orbitally (or dynamically) bounded}, \emph{oscillating}, or \emph{escaping} according to whether the limit functions of the iterates are all bounded, include both infinity and bounded limits, or consist solely of infinity.
The question of the existence of orbitally bounded wandering domains, an open question in complex dynamics for a long time, has recently been answered in the negative.
It was shown in \cite{Prochorov2026}, \cite{Drach2026}, and \cite{Ye2026} that orbitally bounded wandering domains do not exist.

The notion of orbital boundedness extends to arbitrary Fatou components as follows.

\begin{definition}\label{def:orbitally-unbounded}
    Let $f$ be an entire function and $U \subset \CC$ a Fatou component of $f$.
    We call $U$ \emph{orbitally unbounded} if there exist $z \in U$ and a subsequence $(f^{n_k})_{k\in\NN}$ of iterates such that $f^{n_k}(z) \to \infty$ as $k \to \infty$.
    Otherwise, we call $U$ \emph{orbitally bounded}.
\end{definition}
Note that the condition for being orbitally unbounded is independent of the choice of $z \in U$, since on a Fatou component, every limit function is either in $\Hol(\CC,\CC)$ or identically infinity.
An invariant Fatou component is orbitally unbounded if and only if it is a basin at infinity or a Baker domain, and a wandering domain is always orbitally unbounded.

Also recall the definition of an absorbing domain.

\begin{definition}\label{def:absorbing-domain}
    Let $f$ be an entire function and $U \subset \CC$ a Fatou component of $f$.
    A domain $V \subset U$ is called \emph{absorbing in $U$ for $f$} if $f(V) \subset V$ and if for every $z \in U$, there exists $n \in \NN$ such that $f^n(z) \in V$.
\end{definition}

We now turn to holomorphic maps from $\CC^2$ to $ \CC^2$.
In higher dimensions, the notion of normality, and hence the Fatou set, depends on the choice of compactification. 
For $\CC^2$, the natural compactifications are the one-point compactification $\Chat{2}$, the projective space $\PC{2}$, and the product $\Chat{} \times \Chat{}$ of two Riemann spheres.
In this paper, we work with $\Chat{2}$, which leads to the following definition.

\begin{definition}[{\cite[Definition 1.1]{Wu1967}}]
    Let $X$ be a complex manifold. 
    A family $ \cF \subset \Hol(X, \CC^2) $ of holomorphic maps is called \emph{normal} if every sequence in $\cF$ has a subsequence that either converges uniformly on compact subsets to a limit in $\Hol(X, \CC^2)$ or diverges uniformly to infinity on compact subsets.
\end{definition}

Since divergence to infinity corresponds to convergence to the constant map $\infty$ in $\Chat{2}$, this definition captures normality with respect to the one-point compactification.
We can now define the Fatou set in two dimensions.

\begin{definition}\label{def:fatou-2d}
    Let $F\colon \CC^2 \to \CC^2 $ be a holomorphic map.
    The \emph{Fatou set} $\F(F)$ of $F$ is the set of points in $\CC^2$ that have a neighborhood on which the family of iterates $(F^n)_{n\in\NN}$ is normal. 
    The \emph{Julia set} $\J(F)$ of $F$ is the complement $\CC^2 \setminus \F(F)$. 
\end{definition}

Two observations show that this choice of compactification does not weaken our results, and a third shows that it causes no technical difficulties.
First, by \cite[Lemma 2.4]{Arosio2019}, normality with respect to $\PC{2}$ implies normality with respect to $\Chat{2}$.
The same holds for $\Chat{} \times \Chat{}$, with an analogous proof.
Hence every point we show to lie in $\J(F)$ also lies in the Julia set with respect to $\PC{2}$ and $\Chat{} \times \Chat{}$.
This covers our statements about Julia sets.

Second, if the iterates $(F^n)_{n\in\NN}$ are locally bounded in the $w$-coordinate on an open set $U$, then the three notions of normality agree on $U$.
This is the case, for instance, near an attracting fiber, where the $w$-coordinate converges locally uniformly.
This covers our statements about Fatou sets.

Third, unlike $\PC{2}$ and $\Chat{}\times \Chat{}$, the space $\Chat{2}$ does not carry a complex structure.
Nevertheless, no problems arise with limit functions: on a Fatou component $U\subset \CC^2$, every limit function of $(F^n)_{n\in\NN}$ either lies in $\Hol(U, \CC^2)$ or is identically $\infty$.
The interested reader can find more details in \cite[Section 2]{Potthink2026}.

Next, we recall that the backward invariance of the Fatou set transfers from one- to higher-dimensional dynamics with an analogous proof.

\begin{lemma}\label{lemma:backward-invariance}
    For any holomorphic map $F\colon \CC^2 \to \CC^2$, the Fatou set is backward invariant, that is,
    \begin{equation*}
        F^{-1}(\F(F)) \subset \F(F).
    \end{equation*} 
\end{lemma}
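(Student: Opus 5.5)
We take a point $p \in F^{-1}(\F(F))$, so $q \coloneqq F(p) \in \F(F)$, and show directly that the iterates are normal on a neighborhood of $p$. Since $q$ is in the Fatou set, there is an open neighborhood $V$ of $q$ on which $(F^n)_{n\in\NN}$ is normal in the sense of the definition above. By continuity of $F$, the set $W \coloneqq F^{-1}(V)$ is an open neighborhood of $p$. It then suffices to prove that $(F^n)_{n\in\NN}$ is normal on $W$.

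Let $(F^{n_k})_{k}$ be any sequence of iterates. For large $k$ we write $F^{n_k} = F^{n_k-1}\circ F$ on $W$, with $F(W)\subset V$. By normality on $V$, some subsequence of $(F^{n_k-1})$ either converges locally uniformly on $V$ to a holomorphic limit $\phi\colon V\to\CC^2$, or diverges locally uniformly to $\infty$ on $V$. We then transfer each case through $F$ using compact sets. Take $K\subset W$ compact; then $F(K)\subset V$ is compact by continuity.

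In the first case, uniform convergence $F^{n_{k_j}-1}\to\phi$ on $F(K)$ gives uniform convergence $F^{n_{k_j}}\to\phi\circ F$ on $K$, and $\phi\circ F$ is holomorphic on $W$. In the second case, $\norm{F^{n_{k_j}-1}}\to\infty$ uniformly on $F(K)$ gives $\norm{F^{n_{k_j}}}\to\infty$ uniformly on $K$. Hence every sequence of iterates has a subsequence that converges or diverges locally uniformly on $W$, so $W\subset\F(F)$, and in particular $p\in\F(F)$.

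The main (mild) obstacle is making sure that working in $\Chat{2}$, which has no complex structure, causes no trouble. In the argument the limit is either a genuine holomorphic map into $\CC^2$ or uniform divergence, and both notions are preserved under precomposition with the continuous map $F$ and under passing to images of compact sets. Beyond this, the only bookkeeping is the index shift $n_k-1$ and discarding finitely many indices with $n_k=0$.
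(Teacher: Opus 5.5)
Your proof is correct and is exactly the standard one-dimensional argument the paper alludes to with ``an analogous proof'': pull back a neighborhood $V$ of $F(p)$ on which the iterates are normal, write $F^{n_k}=F^{n_k-1}\circ F$ on $F^{-1}(V)$, and transfer either locally uniform convergence or uniform divergence through images of compact sets. One minor fix: if infinitely many $n_k$ equal $0$, you cannot discard them, but you can simply pass to the constant subsequence, so the conclusion is unaffected.
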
 

Furthermore, we will use several results from complex approximation theory, including Runge's Theorem, Arakelyan's Theorem, and Nersesjan's Theorem.
The first result that we shall need is the following version of Runge's Approximation Theorem, which follows directly from \cite[p. 460]{Eremenko1987} (see also \cite[Lemma 5.1]{Benini2022}).

\begin{theorem}[Runge, {\cite[p. 460]{Eremenko1987}}]\label{thm:runge-1}
	Let $(E_n)_{n\in\NN}$ be a sequence of compact subsets of $\CC$ such that
	\begin{enumerate}
		\item the sets $\CC \setminus E_n$ are connected for all $n\in\NN$,
		\item the sets are pairwise disjoint, i.e., $E_n \cap E_m = \emptyset$ for all $n\neq m$, and
		\item $\min\lvert E_n \rvert = \min\set{\lvert z \rvert \mid z \in E_n} \to \infty$ as $n \to \infty$.
	\end{enumerate}
	Suppose that $\psi$ is holomorphic in a neighborhood of $E = \bigcup_{n\in\NN} E_n$, and let $(\e_n)_{n\in\NN}$ be a sequence of positive real numbers. Then there exists an entire function $h$ such that
	\begin{equation*}
		\abs{h(z) - \psi(z)} < \e_n
	\end{equation*}
	for all $z \in E_n$ and $n\in\NN$.
\end{theorem}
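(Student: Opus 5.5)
The plan is the standard inductive Runge argument. I would build polynomials $p_k$ that converge locally uniformly on $\CC$ and whose successive corrections are small on every $E_n$ already handled. Since $\min\abs{E_n}\to\infty$, every disk $D(0,R)$ meets only finitely many of the sets $E_n$. This finiteness is what makes each stage a finite Runge problem.

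\textbf{Setup.} Fix a summable sequence $\delta_k>0$ (to be chosen in the last step). Put $p_0\equiv 0$, let $Q_0=\emptyset$, and let every index $n$ be unused at the start. At stage $k\geq 1$:
\begin{enumerate}
    \item Choose a compact, polynomially convex set $H_k\supset Q_{k-1}\cup\overline{D(0,k)}$. The natural choice is the hull of $Q_{k-1}\cup\overline{D(0,\rho_k)}$ for a suitable radius $\rho_k\geq k$.
    \item Let $I_k$ be the finite set of unused indices $n$ with $E_n\cap D(0,M_k+1)\neq\emptyset$, where $M_k=\max\abs{H_k}$.
    \item Apply Runge's theorem to the compact set $L_k=H_k\cup\bigcup_{n\in I_k}E_n$. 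The function to approximate is $p_{k-1}$ near $H_k$ and $\psi$ near each $E_n$, $n\in I_k$. This gives a polynomial $p_k$ with $\abs{p_k-p_{k-1}}<\delta_k$ on $H_k$ and $\abs{p_k-\psi}<\delta_k$ on each $E_n$, $n\in I_k$.
    \item Set $Q_k=L_k$ and mark the indices in $I_k$ as used.
\end{enumerate}

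For step 3 we need $\CC\setminus L_k$ connected. This holds as long as $H_k$ and the finitely many $E_n$, $n\in I_k$, are pairwise disjoint. Indeed, each has connected complement, and a finite disjoint union of compact sets with connected complements has connected complement (by a Janiszewski/Alexander-duality argument, or directly by induction). It also needs $p_{k-1}$ and $\psi$ to be defined on disjoint open neighborhoods, which disjointness provides.

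\textbf{Convergence and error bounds.} Since $H_k\supset\overline{D(0,k)}$ and $H_{k}\subset H_{k+1}$, the telescoping bounds give that $p_k$ converges uniformly on every compact set. The limit $h$ is therefore entire. Every $E_n$ is eventually used because $M_k\geq k\to\infty$. If $n\in I_j$, then $E_n\subset Q_j\subset H_{j+1}$, so
\begin{equation*}
    \abs{h-\psi}\leq\delta_j+\sum_{i>j}\delta_i \quad\text{on } E_n .
\end{equation*}
Choosing $\delta_j$ so that $\delta_j+\sum_{i>j}\delta_i<\min\set{\e_n \mid n\in I_j}$ gives the claim. This is possible recursively, since each $I_j$ is finite and the choice of $\delta_j$ can be made at stage $j$, with all later $\delta_i$ forced to be smaller.

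\textbf{Main obstacle.} The delicate step is step 1: $H_k$ must be polynomially convex, contain $\overline{D(0,k)}$ and all used sets, and be disjoint from all unused $E_n$. The danger is an unused $E_n$ that meets the disk $\overline{D(0,\rho_k)}$, or that lies partly in a bounded complementary component (a hole) of $Q_{k-1}\cup\overline{D(0,\rho_k)}$.

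\begin{itemize}
    \item Meeting the disk is avoided by choosing $\rho_k$ strictly between $k$ and $\min\abs{E_n}$ over unused $n$. This is possible because all unused sets lie outside $D(0,M_{k-1}+1)$, so choosing $\rho_k\leq M_{k-1}+\tfrac12$ (and $M_{k-1}\geq k-1$) works.
    \item Holes are subtler. An unused set $E_n$ missing $D(0,M_{k-1}+1)$ cannot lie in a hole of $Q_{k-1}$, because $Q_{k-1}$ is contained in $D(0,M_{k-1}+1)$ up to the sets just added. Those just-added sets may, however, reach outward and enclose further unused sets.
\end{itemize}

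To handle holes, I would enlarge $I_k$ by a closure step. Repeatedly add any unused $E_n$ meeting the hull of the current set, or lying in a hole of it, until the process stabilizes. Each round only adds sets meeting a fixed bounded disk, namely the disk of radius equal to the maximal modulus of the current set. I would try to show stabilization by taking $I_k$ to consist of all unused $E_n$ meeting $D(0,R)$ for a radius $R$ chosen after the fact, in the spirit of \cite[p. 460]{Eremenko1987}. If this bookkeeping becomes unwieldy, the fallback is to cite \cite[Lemma 5.1]{Benini2022}, where exactly this arrangement is carried out.
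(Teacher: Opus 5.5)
There is a genuine gap, located exactly at the step you flag as the main obstacle, and the proposed closure step does not close it.

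Your choice of $\rho_k$ keeps $\overline{D(0,\rho_k)}$ away from the unused sets. It does not keep it away from the sets $E_n$, $n\in I_{k-1}$, attached at the previous stage. These only have to meet $D(0,M_{k-1}+1)$, so one of them may be a long horseshoe with both ends in $D(0,\rho_k)$, reaching out to a much larger modulus. Then $Q_{k-1}\cup\overline{D(0,\rho_k)}$ has a hole containing some unused $E_m$, and the hull $H_k$ meets $E_m$, violating your own requirement. (Incidentally, the window $k\le\rho_k\le M_{k-1}+\tfrac12$ can be empty.)

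Iterating a closure does not repair this. The termination claim is unsupported: the ``fixed'' disk has radius equal to the maximal modulus of the current set, which grows with every set you add. More seriously, a set $E_m$ added because it meets a hole lies partly inside the hull. Step~3, however, needs $H_k$ disjoint from all $E_n$, $n\in I_k$, since you cannot require $p_k$ to be $\delta_k$-close both to $p_{k-1}$ and to $\psi$ at the same points. So, in the order ``first a polynomially convex $H_k\supset Q_{k-1}\cup\overline{D(0,k)}$, then attach new sets'', the construction is not established.

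What is missing is a no-new-holes observation combined with a change of order. Write $\widehat{X}$ for a compact set $X$ together with the bounded components of $\CC\setminus X$. Suppose $E_m$ is compact with connected complement and $E_m\cap X=\emptyset$. Then $E_m\cap\widehat{X}$ and $E_m\setminus\widehat{X}$ are relatively open in $E_m$, since they lie in the bounded, respectively unbounded, components of $\CC\setminus X$. Hence $E_m\setminus\widehat{X}$ is compact, is disjoint from $\widehat{X}$, and has connected complement (a relatively clopen part of a compact set with connected complement again has connected complement). Therefore $\widehat{X}\cup E_m$ has connected complement, so a single round of closure suffices, provided the hull is taken \emph{after} the Runge step.

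Concretely, at stage $k$:
\begin{enumerate}
\item Choose $\rho_k\geq k$ with $Q_{k-1}\subset D(0,\rho_k)$.
\item Let $X_k$ be $\overline{D(0,\rho_k)}$ together with the unused $E_n$ meeting it, and let $N_k$ be the finite set of unused $n$ with $E_n\cap\widehat{X_k}\neq\emptyset$.
\item Apply Runge to the disjoint union $Q_{k-1}\cup\bigcup_{n\in N_k}E_n$, approximating $p_{k-1}$ on $Q_{k-1}$ and $\psi$ on the new sets.
\item Set $Q_k\coloneqq\widehat{X_k}\cup\bigcup_{n\in N_k}E_n$.
\end{enumerate}
By the observation, $Q_k$ has connected complement, contains $Q_{k-1}$, $\overline{D(0,k)}$ and all used sets, and misses all unused ones. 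Your telescoping and error bookkeeping then go through with $Q_{k-1}$ in the role of $H_k$.

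For comparison, the paper gives no proof of this statement. It takes it directly from \cite[p.~460]{Eremenko1987} (see also \cite[Lemma~5.1]{Benini2022}), which is your fallback. An argument of the same kind also shows that $E$ satisfies the hypotheses of Nersesjan's Theorem~\ref{thm:nersesjan}, which then yields the statement at once with $\varepsilon(z)=\varepsilon_n$ on $E_n$.
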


We now introduce Weierstraß sets.

\begin{definition}[{\cite[p. 142]{Gaier1987}}]
	Let $G \subset \CC$ be open and connected. A relatively closed subset $E \subset G$ is called a \emph{Weierstraß set} if every continuous function ${f \colon E \to \CC}$ which is holomorphic on the interior $\interior{E}$ can be uniformly approximated on $E$ by functions in $\Hol(G,\CC)$, i.e., for every $\e > 0$ there exists $g \in \Hol(G, \CC)$ such that
	\begin{equation*}
		\abs{f(z) - g(z)} < \e
	\end{equation*} 
	for all $z \in E$.
\end{definition}

We have the following characterization of Weierstraß sets.

\begin{theorem}[Arakelyan, {\cite[p. 142]{Gaier1987}}]\label{thm:arakelyan}
	Let $G \subset \CC$ be open and connected. A relatively closed subset $E \subsetneq G$ is a Weierstraß set if and only if the following properties hold:
	\begin{enumerate}
		\item $\widehat{G}\setminus E$ is connected, and
		\item $\widehat{G}\setminus E$ is locally connected at $\infty$,
	\end{enumerate}
    where $\widehat{G}$ is the one-point compactification of $G$.
\end{theorem}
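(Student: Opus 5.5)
Since \Cref{thm:arakelyan} is a classical result quoted from \cite{Gaier1987}, the paper does not prove it. If I had to prove it, I would treat necessity and sufficiency separately, with sufficiency as the substantial part. The tools would be Mergelyan's theorem on compact sets with connected complement, an exhaustion argument, and Roth's fusion lemma to glue local approximations.

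\emph{Necessity.} Suppose first that $\widehat{G}\setminus E$ is disconnected. Then $G\setminus E$ has a component $V$ with $\overline V$ compact in $G$ and $\partial V\subset E$. Pick $a\in V$ and set $f(z)=1/(z-a)$, which is continuous on $E$ and holomorphic on $\interior{E}$. If $g\in\Hol(G,\CC)$ satisfies $\abs{f-g}<\e$ on $E$, then $\abs{(z-a)g(z)-1}<\e\,\norm{z-a}_{\partial V}$ on $\partial V$. By the maximum principle this bound holds on $V$, and evaluating at $z=a$ gives $1<\e\, C$, a contradiction for small $\e$.

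Suppose next that local connectedness at $\infty$ fails. Then there is a compact $K\subset G$ and infinitely many components $V_j$ of $G\setminus(E\cup K)$ that accumulate only at $\partial G\cup\set{\infty}$ and whose boundaries relative to $G$ lie in $E\cup K$. Choose $f$ to be $0$ on $E\cap K'$, for a slightly larger compact $K'$, and to take increasingly large constant values $c_j$ on $E$ near $V_j$, arranged so that $f$ is continuous and holomorphic on the interior. An approximant $g$ would then be uniformly bounded on $K$ and huge on $\partial V_j\cap E$. I would get the contradiction from a two-constants or maximum-principle estimate on a fixed region meeting $K$ that the $V_j$ enter, showing $g$ cannot be both bounded on $K$ and as large as $c_j$ on these pieces once $c_j$ grows fast enough.

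\emph{Sufficiency.} Exhaust $G$ by compacts $K_0\subset K_1\subset\cdots$ such that each $\widehat{G}\setminus K_n$ is connected. The key geometric step, and the one I expect to be the main obstacle, is to use connectedness and local connectedness at $\infty$ of $\widehat G\setminus E$ to enlarge the $K_n$ so that every compact $L_n\coloneqq K_{n}\cup(E\cap K_{n+1})$ has connected complement in $\widehat{G}$. Local connectedness at $\infty$ is exactly what prevents holes of $E\cup K_n$ that escape every compact. Here I would fill in finitely many holes at each stage and check that the enlargements still exhaust $G$.

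Given $f$ and $\e>0$, I would then build $g_n$ inductively, each holomorphic on a neighborhood of $K_{n+1}$, with
\begin{equation*}
    \abs{g_n-f}<\e(1-2^{-n-1}) \text{ on } E\cap K_{n+1}, \qquad \abs{g_n-g_{n-1}}<\e 2^{-n-1} \text{ on } K_{n-1}.
\end{equation*}
To pass from $g_{n-1}$ to $g_n$, define a function on $L_n$ that equals $g_{n-1}$ near $K_n$ and equals $f$ on the rest of $E\cap K_{n+1}$. These two pieces disagree only by a small amount on the overlap, and Roth's fusion lemma smooths the mismatch with control of order $\e 2^{-n}$ away from the overlap. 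The resulting function is continuous on $L_n$ and holomorphic on its interior, so Mergelyan's theorem gives a rational approximant with poles off $L_n$. Since $\widehat{G}\setminus L_n$ is connected, I would push the poles to $\partial G\cup\set{\infty}$ by the usual pole-pushing argument, so the approximant is holomorphic near $K_{n+1}$.

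The $g_n$ then converge locally uniformly on $G$ to some $g\in\Hol(G,\CC)$ with $\abs{g-f}\le \e$ on $E$, which proves that $E$ is a Weierstraß set.
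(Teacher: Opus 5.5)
The paper only quotes this theorem from Gaier, so there is no proof in the paper to compare with. Judged on its own, your first necessity step is fine: the relatively compact component $V$ of $G\setminus E$, the function $1/(z-a)$ and the maximum principle work. The other two parts have genuine gaps.

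\textbf{Necessity of local connectedness at $\infty$.} Large constants cannot produce a contradiction. You cannot prescribe values $c_j\to\infty$ on $E$ inside a fixed compact neighbourhood of $K$, because $f$ is continuous and hence bounded on compact subsets of $E$. If you put them far out, ``$g$ bounded on $K$ and large far out'' is not contradictory. The maximum principle and the two-constants theorem only bound $\abs{g}$ from above at points of $V_j$, and there $g$ is not constrained at all.

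The obstruction has to be of winding type, as in your first step. One way to do it:
\begin{itemize}
\item Pick $a_j\in V_j$ that leave every compact subset of $G$ and at which the harmonic measure $\omega_j$ of $\partial V_j\cap K$ relative to $V_j$ tends to $0$. For instance, take $a_j$ close to a regular boundary point of $V_j$ that lies in $E$ outside a large compact set.
\item By Mittag-Leffler, let $f$ be meromorphic on $G$ with simple poles exactly at the $a_j$ and residues $D_j\coloneqq\operatorname{diam}V_j$.
\item Suppose $\abs{g-f}<\e<1$ on $E$ and set $M\coloneqq\norm{g-f}_K$. Then $(z-a_j)(g-f)$ has modulus $D_j$ at $a_j$, at most $\e D_j$ on $\partial V_j\cap E$, and at most $MD_j$ on $\partial V_j\cap K$.
\item The two-constants theorem gives $0\le\omega_j\log M+(1-\omega_j)\log\e$, which fails for large $j$.
\end{itemize}

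\textbf{Sufficiency.} The overall plan (filling holes, Mergelyan, pole pushing, fusion) is the right one, but the error bookkeeping does not close. You fuse $g_{n-1}$ with $f$ itself. By your own induction hypothesis these differ on the overlap by up to $\e(1-2^{-n})$, which is not small.

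Roth's lemma turns a mismatch $\mu$ into errors of size up to $a\mu$ on all of $K_1\cup K_2\cup K$, where the constant $a$ depends on the sets and is not small. So you cannot guarantee $\abs{g_n-g_{n-1}}<\e2^{-n-1}$ on $K_{n-1}$, nor $\abs{g_n-f}<\e(1-2^{-n-1})$ on $E\cap K_{n+1}$. Asking $g_{n-1}$ to be more accurate on the overlap does not help either. That accuracy is itself limited by the previous fusion error, so the constants compound from step to step.

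The standard remedy is to fuse with a running function defined on all of $E$:
\begin{itemize}
\item Put $h_0=f$. Suppose $h_{n-1}$ is continuous on $E\cup K_{n-1}'$, where $K_{n-1}'$ is a compact neighbourhood of $K_{n-1}$, and holomorphic on its interior.
\item Your hole-filling step gives $K_{n-1}\cup(E\cap K_{n+1})$ connected complement in $\widehat{G}$. Approximate $h_{n-1}$ there within $\eta_n$ by some $p_n\in\Hol(G,\CC)$.
\item Fuse $p_n$, taken on a compact neighbourhood $K_n'\subset\interior{K}_{n+1}$ of $K_n$, with $h_{n-1}$ on the closed set $E\setminus\interior{K}_{n+1}$. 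The overlap is $E\cap K_{n+1}$.
\end{itemize}
The mismatch is now $\eta_n$, and you choose it after the fusion constant $a_n$ of this fixed pair of sets is known. You get $\abs{h_n-h_{n-1}}<(a_n+1)\eta_n\le\e2^{-n}$ on $E\cup K_{n-1}$. Hence $h_n\to h\in\Hol(G,\CC)$ with $\abs{h-f}\le\e$ on $E$.

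This needs the fusion lemma for a compact set and a closed set, and for functions that are continuous on these sets and holomorphic on their interiors, not only for rational functions. It is proved by the same $\bar\partial$ argument as Roth's lemma.

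A smaller point: $E\cup K_n$ may have infinitely many holes, not finitely many. Local connectedness at $\infty$ says their union is relatively compact in $G$, and that is what allows you to fill them.
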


Next, we introduce Carleman sets.

\begin{definition}[{\cite[p. 145]{Gaier1987}}]
	Let $G \subset \CC$ be open and connected. A relatively closed subset $E \subset G$ is called a \emph{Carleman set} if for every continuous function $f \colon E \to \CC$ which is holomorphic on the interior $\interior{E}$ and for every $\e \colon E \to (0,\infty)$ continuous, there exists $g \in \Hol(G, \CC)$ satisfying
	\begin{equation*}
		\abs{f(z) - g(z)} < \e(z)
	\end{equation*}
	for all $z \in E$.
\end{definition}

The following result gives a characterization of Carleman sets.

\begin{theorem}[Nersesjan, {\cite[p. 157]{Gaier1987}}]\label{thm:nersesjan}
	Let $G \subset \CC$ be open and connected. A relatively closed subset $E \subsetneq G$ is a Carleman set if and only if the following properties hold:
	\begin{enumerate}
		\item $\widehat{G}\setminus E$ is connected,
		\item $\widehat{G}\setminus E$ is locally connected at $\infty$, and
		\item for every compact $K \subset G$ there exists a neighborhood $V \subset \widehat{G}$ of $\infty$ such that no component of the interior $\interior{E}$ intersects both $K$ and $V$,
	\end{enumerate}
    where $\widehat{G}$ is the one-point compactification of $G$.
\end{theorem}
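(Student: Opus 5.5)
Since this result is quoted from \cite[p.~157]{Gaier1987} and is not proved in the paper, what follows is only how I would prove it myself. I would derive both directions from Arakelyan's \Cref{thm:arakelyan}.

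\textbf{Necessity.} A Carleman set is a Weierstraß set, since we may take $\e$ constant. So conditions (1) and (2) follow at once from \Cref{thm:arakelyan}.

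For (3) I would argue by contradiction. Suppose there is a compact $K\subset G$ and interior components $W_n$ of $E$ that meet $K$ and contain points $p_n\to\infty$ in $\widehat G$. The $W_n$ are not necessarily distinct; the case of a single component $W$ reaching both $K$ and $\infty$ is the essential one. In $W_n$, join a point $a_n\in K$ to $p_n$ by a path $\gamma_n$. The aim is a continuous $f$ on $E$, holomorphic on $\interior E$, with $\abs{f}$ of moderate size on $E$, together with a fast-decaying tolerance $\e$ that makes every approximant $g$ large on $K$. For instance, I would take $f$ to be a sum of Cauchy-type terms $c_n/(z-q_n)$, with poles $q_n\notin E$ placed near $p_n$, so that $f$ is large at $a_n$. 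On $W_n$ the difference $g-f$ is holomorphic and smaller than $\e$ along $\gamma_n$. Choosing $\e$ after the poles, so that it is tiny near $p_n$ and along $\gamma_n$, would then force $\sup_K\abs{g}$ to exceed any bound fixed beforehand. The delicate point is to make this rigorous while keeping $f$ continuous across $\partial W_n$.

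An alternative I would try first is a uniform-boundedness argument. Carleman approximability with $\e\to0$ at $\infty$ yields, on each component $W$ meeting $K$, a maximum-principle inequality for $g$ on a neighbourhood of $K$ in terms of its values near $\infty$. Such an inequality fails for suitable $f$ that is holomorphic on $W$ and continuous up to the boundary.

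\textbf{Sufficiency.} This is the real content. Fix a continuous $f$ on $E$, holomorphic on $\interior E$, and a continuous $\e>0$ on $E$. Take an exhaustion $K_1\subset K_2\subset\cdots$ of $G$ by compact sets with connected complements in $\widehat G$, and set $\delta_j=\min_{E\cap K_{j+1}}\e$.

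By (3), after passing to a subsequence of the exhaustion, every interior component of $E$ meeting $K_j$ is contained in $K_{j+1}$. This gives a separation: the set
\begin{equation*}
E_j\coloneqq \bigl(E\setminus \interior{K}_{j+1}\bigr)\cup K_j'
\end{equation*}
is relatively closed, where $K_j'$ is a slightly enlarged, hull-filled version of $K_j$ disjoint from $E\setminus\interior K_{j+1}$. The set $E_j$ still satisfies Arakelyan's hypotheses (1) and (2). Moreover, any function that equals $0$ on $K_j'$ and a prescribed function on $E\setminus\interior K_{j+1}$ is continuous on $E_j$ and holomorphic on $\interior E_j$.

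I then build approximants inductively. Apply \Cref{thm:arakelyan} to get $g_0\in\Hol(G,\CC)$ with $\abs{f-g_0}<\delta_0/2$ on $E$. Given $g_0,\dots,g_{j-1}$, apply \Cref{thm:arakelyan} on $E_j$ to the function that is $0$ on $K_j'$ and $f-\sum_{i<j}g_i$ on the far part, with error $\delta_{j}2^{-j-1}$. This yields $g_j\in\Hol(G,\CC)$ that is tiny on $K_j$ and corrects the error away from $K_{j+1}$. The series $g=\sum_j g_j$ converges locally uniformly on $G$. On each shell $E\cap(K_{j+1}\setminus K_j)$, the telescoping estimate gives $\abs{f-g}<\sum_{i\ge j}\delta_i2^{-i}\le\e$.

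The main obstacle is the separation step: verifying that $E_j$ satisfies the connectivity and local-connectivity conditions of \Cref{thm:arakelyan}, and that the piecewise-defined target function is genuinely continuous. The latter needs exactly the fact, from (3), that no interior component straddles $K_j$ and the far region. The necessity of (3) is the secondary difficulty, because of the explicit construction it requires.
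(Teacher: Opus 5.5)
The paper only quotes this theorem from Gaier's book, so there is no in-paper proof to compare with. Judged on its own, your proposal correctly obtains (1) and (2) from \Cref{thm:arakelyan}, but both remaining parts have genuine gaps.

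\textbf{Sufficiency.} Your $E_j=(E\setminus\interior{K}_{j+1})\cup K_j'$ is a disjoint union of two closed pieces. It therefore omits $E\cap(\interior{K}_{j+1}\setminus K_j')$, and this set is nonempty whenever $E$ is connected and runs from $K_j$ to $\infty$ (e.g.\ $E=\RR$), whatever $K_j'$ you choose. Arakelyan's theorem says nothing about $g_j$ there. Take $z$ in this set: the steps $i<j$ make $\abs{f-S_{j-1}}(z)$ small, since $z\in E\setminus\interior{K}_j$, and the steps $i>j$ make $\abs{g_i(z)}$ small, since $z\in K_{j+1}\subset K_i'$. But $g_j(z)$ is arbitrary, so the claimed shell estimate on $E\cap(K_{j+1}\setminus K_j)$ does not follow.

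There is a second problem: $E_j$ need not satisfy Arakelyan's hypotheses. For $G=\CC$ and $E=K_{j+1}=\overline{D(0,5)}$, which obeys your rule, the circle $\partial D(0,5)\subset E_j$ cuts $D(0,5)\setminus K_j'$ off from $\infty$.

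What is missing is a way to pass from the target $0$ near $K_j$ to the far target through $E$ itself, holomorphically on $\interior{E}$ and with size control in between. This is exactly where (3) has to enter; the separation you use it for does not touch the issue. For instance, it would suffice to find $u$ continuous on $E$ and holomorphic on $\interior{E}$ with $\re u\le\log\e-1$. Then \Cref{thm:arakelyan} gives $\lambda\in\Hol(G,\CC)$ with $\abs{\lambda-u}<1$ on $E$, hence $\abs{e^{\lambda}}<\e$ on $E$, and $g=e^{\lambda}g_0$ with $\abs{g_0-fe^{-\lambda}}<1$ on $E$ works. Constructing such a $u$ is the real content. It cannot be done by taking $u$ constant on each component of $\interior{E}$: the chain $E=\bigcup_{k\ge0}\overline{D(2k,1)}$ of tangent disks satisfies (1)--(3), yet any continuous function on $E$ that is constant on each component of $\interior{E}$ is constant on all of $E$.

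\textbf{Necessity of (3).} Neither of your sketches produces a contradiction. Since $f$ is continuous on $E$, it is bounded on the compact set $K\cap E$, so it cannot be large at all the points $a_n\in K$. The condition $\abs{g-f}<\e$ then bounds $g$ on $K\cap E$, and $g$ is not controlled at all on $K\setminus E$, so there is no bound on $\sup_K\abs{g}$ to violate. The right object is $h=g-f$ on the components $W_n$, the right tool is the two-constants theorem, and the contradiction comes from the identity theorem.

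\begin{enumerate}
\item Fix $b\in G\setminus E$ and $f(z)=1/(z-b)$.
\item Choose paths $\gamma_n\subset W_n$ from $a_n$ to $p_n$, and disks $\overline{D_n}\subset W_n$ centred at $p_n$ forming a locally finite family. Since $W_n\setminus\overline{D_n}$ is connected, the harmonic measure of $\partial D_n$ in it is at least some $c_n>0$ on $\gamma_n\setminus D_n$.
\item Take a continuous $\e\le1$ with $\e\le\eta_n$ on $\overline{D_n}$, where $\eta_n^{c_n}\le 1/n$. As $\abs{h}<\e\le1$ on $W_n$, the two-constants theorem gives $\abs{h}\le 1/n$ on $\gamma_n$ for every approximant $g$.
\item Pass to a subsequence with $a_n\to a$. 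The upper limit of the connected sets $\gamma_n$ is then a continuum in $\widehat{G}$ containing $a$ and $\infty$. Its part in $G$ lies in $E$, accumulates in $G\setminus\set{b}$ (it meets every small circle about $a$), and $h=0$ on it by continuity.
\item The identity theorem on $G\setminus\set{b}$ now forces $g=1/(z-b)$, which is impossible.
\end{enumerate}

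This treats a single component reaching from $K$ to $\infty$ and infinitely many distinct components in the same way.
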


Also recall Weierstraß's Theorem on uniform convergence of holomorphic functions.

\begin{theorem}[Weierstraß, {\cite[p. 176]{Ahlfors1979}}]\label{thm:weierstrass}
    Let $G_1 \subset G_2 \subset \dots$ be an increasing sequence of domains in $\CC$ and $(f_n)_{n\in\NN}$ a sequence of holomorphic functions $f_n\colon G_n \to \CC$. Let $G \coloneqq \bigcup_{n\in\NN} G_n$ and assume that $(f_n)_{n\in\NN}$ converges uniformly on any compact subset of $G$ to a limit function $f$. Then $f$ is holomorphic on $G$.
\end{theorem}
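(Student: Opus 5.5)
The statement is local, so I will fix a point $z_0 \in G$ and show that $f$ is holomorphic on a small disk around $z_0$. First I place such a disk inside a single $G_N$. Since $G = \bigcup_n G_n$ is open, there is $\rho > 0$ with $K \coloneqq \overline{D(z_0,\rho)} \subset G$. The sets $G_n$ form an increasing open cover of the compact set $K$. Hence finitely many of them cover $K$, and by monotonicity a single one does: $K \subset G_N$ for some $N$.

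Consequently $f_n$ is holomorphic on a neighborhood of $K$ for all $n \ge N$. By hypothesis $f_n \to f$ uniformly on $K$. Since each $f_n$ is continuous there, the uniform limit $f$ is continuous on $K$.

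To get holomorphy I would use Morera's theorem on the open disk $D(z_0,\rho)$. Take any closed triangle $T \subset D(z_0,\rho)$. By Cauchy--Goursat, $\int_{\partial T} f_n(z)\,dz = 0$ for every $n \ge N$. Uniform convergence on the compact curve $\partial T \subset K$ gives the estimate
\begin{equation*}
    \abs[\Big]{\int_{\partial T} f(z)\,dz} = \abs[\Big]{\int_{\partial T} \paren{f(z)-f_n(z)}\,dz} \le \operatorname{length}(\partial T)\,\norm{f-f_n}_{K} \xrightarrow{n\to\infty} 0.
\end{equation*}
Thus $\int_{\partial T} f\,dz = 0$ for every triangle in the disk. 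Together with the continuity of $f$, Morera's theorem shows that $f$ is holomorphic on $D(z_0,\rho)$. Since $z_0 \in G$ was arbitrary, $f$ is holomorphic on $G$.

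As an alternative to Morera, one can pass to the limit in the Cauchy integral formula
\begin{equation*}
    f_n(z) = \frac{1}{2\pi i}\int_{\abs{\zeta - z_0} = \rho} \frac{f_n(\zeta)}{\zeta - z}\,d\zeta, \qquad \abs{z - z_0} < \rho.
\end{equation*}
This exhibits $f$ as a Cauchy integral of a continuous function, which is holomorphic.

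The only step needing any care is the first one. The functions $f_n$ are not all defined on $G$, so we must ensure that a fixed compact neighborhood of $z_0$ eventually lies in every domain $G_n$. This is exactly where we use that the $G_n$ are increasing and open, combined with compactness. The remaining steps are the standard one-domain argument.
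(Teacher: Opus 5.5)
Your proof is correct and is essentially the argument of the source the paper cites, \cite[p.~176]{Ahlfors1979}; the paper itself gives no proof of this statement. Ahlfors likewise fixes a closed disk that lies in $G_n$ for all large $n$, and obtains vanishing contour integrals for $f$ from Cauchy's theorem plus uniform convergence. He then concludes by Morera's theorem and mentions the Cauchy integral formula as an alternative, exactly as you do.
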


Lastly, we use the following definition for polynomials in one variable.

\begin{definition}\label{def:polynomial-in-z}
    A holomorphic function $r \colon \CC^2 \to \CC$ is called a \emph{polynomial in $z$ of degree $\ell$} if it can be written as
    \begin{equation*}
        r(z,w) = \sum_{j = 0}^{\ell} r_j(w) z^j
    \end{equation*}
    for entire functions $r_0, \dots, r_\ell$ with $r_\ell \not\equiv 0$.
    By convention, we call the constant zero function a polynomial in $z$ of degree $-\infty$.
\end{definition}

\section{Proof of \texorpdfstring{Theorem~\ref{thm:basin-inf-classification}}{the classification theorem}}\label{sec:basin-at-infinity}
This section is devoted to the proof of \Cref{thm:basin-inf-classification}.

\basin* 

The proof of \Cref{thm:basin-inf-classification} is split into three lemmata. We first show the two main implications separately and then prove that non-bulging implies nowhere bulging in this setting.
\begin{lemma}\label{lemma:basin-inf-classification-oneway}
    Under the hypotheses of \Cref{thm:basin-inf-classification}, if $d \geq \ell$ or $m \geq d$, then the basin at infinity of $f$ is bulging.
\end{lemma}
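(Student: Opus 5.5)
Fix a point $z_0$ in the basin at infinity $A$ of $f$. We want a neighborhood $V$ of $(z_0,0)$ in $\CC^2$ on which $\abs{\pi_z F^n}\to\infty$ uniformly. This gives $F^n\to\infty$ uniformly on $V$, hence normality, hence $(z_0,0)\in\F(F)$. The strategy is to use the backward invariance of \Cref{lemma:backward-invariance} to reduce to points with large $\abs{z}$. We may then work in a product neighborhood $\set{\abs{z}>R}\times\set{\abs{w}<\delta}$, where $\delta$ is small enough that $\abs{g(w)}\le \abs{w}$ there and $w_n\to0$.

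Write $f(z)=az^d+O(z^{d-1})$ and $r(z,w)=\sum_{j\le \ell} r_j(w)z^j$ with $r_j(0)=0$. Each $r_j$ vanishes at $0$, so $\abs{r_j(w)}\le C\abs{w}$ for $\abs{w}<\delta$. Also $\abs{w_n}\le C\lambda^n\abs{w}$ for some $\lambda<1$ in the geometrically attracting case $m=1$. In the superattracting case $m\ge 2$ we instead have $\abs{w_n}\le (C\abs{w})^{m^n}$.

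\emph{Case $d\ge\ell$.} Here $\abs{r(z,w)}\le C\abs{w}(1+\abs{z})^{d}$. For $\abs{z}>R$ and $\abs{w}<\delta$ with $C\delta<\abs{a}/4$, this gives $\abs{z_1}\ge \tfrac{\abs{a}}{2}\abs{z}^d\ge 2\abs{z}$. The region $\set{\abs{z}>R}\times\set{\abs{w}<\delta}$ is therefore forward invariant, and on it $\abs{z_n}\to\infty$ uniformly, in fact superexponentially. Now take $z_0\in A$ and pick $N$ with $\abs{f^N(z_0)}>2R$. By continuity there is a neighborhood of $(z_0,0)$ that $F^N$ maps into the invariant region. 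Alternatively, that neighborhood lies in $F^{-N}$ of a Fatou set piece, and \Cref{lemma:backward-invariance} applies.

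\emph{Case $m\ge d$ (and $\ell>d$).} Iterating the dominant term gives the growth bound $\abs{z_{n}}\le M^{d^n}$ or $M^{\ell^n}$ in the worst case. What matters is the product $\abs{w_n}\abs{z_n}^{\ell}$ compared with $\abs{z_n}^d$. We will set up a joint invariant region of the form
\begin{equation*}
W=\set{(z,w) : \abs{z}>R,\ \abs{w}\,\abs{z}^{\ell-d}<\eta}.
\end{equation*}
On $W$ we have $\abs{r}\le C\eta\abs{z}^d$, so $\abs{z_1}\ge \tfrac{\abs{a}}{2}\abs{z}^d$ and also $\abs{z_1}\le 2\abs{a}\abs{z}^d$. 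This gives
\begin{equation*}
\abs{w_1}\abs{z_1}^{\ell-d}\le C\abs{w}^m (2\abs{a})^{\ell-d}\abs{z}^{d(\ell-d)}.
\end{equation*}
This is $\le\eta$ provided $\abs{w}^m\abs{z}^{d(\ell-d)}$ is small compared with $(\abs{w}\abs{z}^{\ell-d})^{d}$. Since $m\ge d$ and $\abs{w}<\delta$ is small, we have $\abs{w}^m\le \delta^{m-d}\abs{w}^d$, and the constants close up after shrinking $\delta$ and increasing $R$. In the equality case $m=d$ one uses that the constant in $g(w)=bw^m+\dots$ can be absorbed by choosing $\eta$ small relative to $\abs{b}$. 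So $W$ is forward invariant and $\abs{z_n}\to\infty$ uniformly on it. For $z_0\in A$, pick $N$ with $\abs{f^N(z_0)}$ large. Then a small neighborhood of $(z_0,0)$ is mapped into $W$, since the condition only requires $\abs{w_N}$ to be small relative to a fixed power of $\abs{z_N}$, and $w_N$ is close to $0$ near the fiber.

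The main obstacle is the second case: designing the weighted region $W$ and verifying its invariance uniformly, especially the borderline $m=d$ where the exponents match exactly and only the constants decide. Here I would first try the explicit quantity $\abs{w}\abs{z}^{\ell-d}$ as above. If the constants fail at $m=d$, I would switch to $\abs{w}\abs{z}^{\ell-d}\le \eta\log\abs{z}$-type slack, or conjugate $g$ to $w\mapsto w^m$ via Böttcher coordinates to get exact control.
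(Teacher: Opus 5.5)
Your proof is correct, and your first case ($d\ge\ell$) is the paper's argument: a uniform estimate $\abs{z_1}\ge 2\abs{z}$ on $\overline{D(0,\rho)}^c\times D(0,\delta)$, followed by the absorbing-domain and backward-invariance step.

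For $m\ge d$ you take a genuinely different route.

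The paper's route: it conjugates $g$ to $w\mapsto w^m$ and fixes an annulus $A(0;\rho,R)$. It chooses $\delta=\delta(R)$ with $\delta((3/2)^dR)^\ell$ small and proves by induction the two-sided orbit bounds
\[
(1/2)^{(d^{n+1}-1)/(d-1)}\abs{z}^{d^n}\le\abs{z_n}\le(3/2)^{(d^{n+1}-1)/(d-1)}\abs{z}^{d^n}.
\]
Here $m\ge d$ is used to bound $\abs{w_n}\abs{z_n}^\ell$ by $B(\delta((3/2)^dR)^\ell)^{m^n}$. Bulging then follows from the union over $R$ of the product neighborhoods $A(0;\rho,R)\times D(0,\delta(R))$.

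Your route: you build a single forward-invariant open region $\set{\abs{z}>R,\ \abs{w}\abs{z}^{\ell-d}<\eta}$ and check invariance in one step. This needs only $\abs{g(w)}\le C\abs{w}^m$, not the B\"ottcher normal form.

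You do not need to worry about the borderline case $m=d$, and the fallbacks in your last paragraph can be dropped. For $\abs{w}<1$ and $m\ge d$ you get
\[
\abs{w_1}\abs{z_1}^{\ell-d}\le K\bigl(\abs{w}\abs{z}^{\ell-d}\bigr)^d<K\eta^d\le\eta
\]
as soon as $\eta^{d-1}\le 1/K$, which is possible because $d\ge 2$. What closes the estimate is this self-improvement $\eta^d\ll\eta$, not smallness of $\abs{w}^{m-d}$, which is unavailable when $m=d$. Your sentence about $\abs{w}^m\abs{z}^{d(\ell-d)}$ being "small compared with" $(\abs{w}\abs{z}^{\ell-d})^d$ should be rephrased: for $m\ge d$ the former is at most the latter, and the slack comes from $\eta$.

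What each approach buys:
\begin{itemize}
\item Yours is shorter and shows exactly where $m\ge d$ and $d\ge2$ enter. It also exhibits the shape of the Fatou region near the fiber, $\abs{w}\lesssim\abs{z}^{-(\ell-d)}$.
\item The paper's gives explicit two-sided growth rates for the orbits, parallel to the estimates it uses for the converse implication.
\end{itemize}
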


\begin{remark}
    In particular, the basin at infinity is bulging whenever $F$ is a \emph{regular polynomial map of degree $d \geq 2$}, i.e., the components of $F$ are polynomials of degree $d$ and the homogeneous part $F_{\text{hom}}$ of degree $d$ satisfies $F_{\text{hom}}^{-1}(0,0) = \set{(0,0)}$.
    In this case $\ell \leq d$, since the first component of $F$ has degree $d$.
    This can also be deduced more directly from the growth condition
    \begin{equation}\label{eq:regular-poly}
        \liminf_{\norm{(z,w)} \to \infty} \frac{\norm{F(z,w)}}{\norm{(z,w)}^d} > 0,
    \end{equation}
    which is equivalent to regularity (see \cite[p. 158]{Bedford2000}).
    Indeed, from \eqref{eq:regular-poly} it follows that there exists $R > 0$ large enough that for all $(z,w) \in \CC^2$ with $\norm{(z,w)} \geq R$,
    \begin{equation*}
        \norm{F(z,w)} \geq 2 \norm{(z,w)}.
    \end{equation*}
    Consequently, all points in $\set{(z,w) \in \CC^2 \mid \norm{(z,w)} \geq R}$ diverge uniformly to infinity.
\end{remark}

\begin{proof}
    We assume throughout this proof that $\ell \geq 0$.
    Otherwise, $\ell = -\infty$, which implies $r \equiv 0$.
    Then $F$ is a product map and all Fatou components are bulging.

    Since $r$ vanishes on the fiber $\set{w = 0}$, there exists a holomorphic function $h \colon \CC^2 \to \CC$ such that $r(z,w) = wh(z,w)$ for all $(z,w) \in \CC^2$.
    The functions $f$ and $h$ can be written as
    \begin{equation*}
        f(z) = \sum_{j=0}^d a_j z^j \text{ and } h(z,w) = \sum_{j = 0}^{\ell} h_j(w) z^j
    \end{equation*}
    with coefficients $a_0, \dots, a_d \in \CC$ where $a_d \neq 0$, and entire functions $h_0, \dots, h_\ell$ where $h_\ell \not\equiv 0$.

    Up to rescaling in $z$, we may assume that $a_d = 1$.
    Up to a local one-dimensional holomorphic conjugacy in $w$, we may assume that near $w = 0$, that is, on a set $D(0,\delta_g)$ for $0 < \delta_g < 1$ sufficiently small, the function $g$ is of the form $g(w) = \lambda w$ for some $\lambda \in \DD \setminus\set{0}$ if $m =1$ or of the form $g(w) = w^m$ if $m \geq 2$.
    From now on, we work on $\CC \times D(0,\delta_g)$ with $g$ in this normal form.

    For $z \in \CC\setminus\set{0}$ and $w \in D(0,\delta_g)$, the first coordinate function of $F$ can be written as
    \begin{equation}\label{eq:basin-at-infinity-1}
        f(z) + wh(z,w) = z^d \paren*{ 1 + \sum_{j=0}^{d-1} a_j z^{j-d} + w \sum_{k = 0}^{\ell} h_k(w) z^{k - d} }.
    \end{equation}
    Define $A\coloneqq \sum_{j=0}^{d-1} \abs{a_j}$ and $B \coloneqq \sum_{j=0}^{\ell} \sup_{\abs{w} \leq \frac{\delta_g}{2}} \abs{h_j(w)} > 0$. 

    We start with the case $\deg(f) = d \geq \ell$.
    Since $d \geq \ell$, if we fix any $\rho \geq \max\set{4A, 4}$ and $0 < \delta < \min\set*{\frac{1}{4B}, \frac{\delta_g}{2}}$, then for all $z \in \overline{D(0,\rho)}^c$ and $w \in D(0,\delta)$,
    \begin{equation}\label{eq:basin-at-infinity-2}
        \abs*{\sum_{j=0}^{d-1} a_j z^{j-d} + w \sum_{k = 0}^{\ell} h_k(w) z^{k - d}} \leq A \frac{1}{\abs{z}} + B \abs{w} < A \frac{1}{\rho} + B \delta < \frac{1}{2}.
    \end{equation}
    Then, from \eqref{eq:basin-at-infinity-1} and \eqref{eq:basin-at-infinity-2} together with the reverse triangle inequality, it follows that
    \begin{equation*}
        \abs{f(z) + wh(z,w)} \geq \abs{z}^d \paren*{1 - \frac{1}{2}} \geq \abs{z} \frac{\rho}{2} \geq \frac{4}{2} \abs{z} = 2 \abs{z}
    \end{equation*}
    for all $z \in \overline{D(0,\rho)}^c$ and $w \in D(0,\delta)$.
    Inductively, we obtain for the $n$th iterate $(z_n,w_n) = F^n(z,w)$ that
    \begin{equation*}
        \abs{z_n} \geq 2^n \abs{z} \geq 2^n \rho \to \infty
    \end{equation*}
    uniformly as $n \to \infty$ on $\overline{D(0,\rho)}^c \times D(0,\delta)$. 
    This implies that $\overline{D(0,\rho)}^c \times \set{0} \subset \F(F)$.
    Since $\overline{D(0,\rho)}^c$ is an absorbing domain for the basin at infinity, in the sense of \Cref{def:absorbing-domain}, the claim follows using the backward invariance of the Fatou set as established in \Cref{lemma:backward-invariance}.

    We now turn to the second case and assume that $m \geq d$. 
    In particular, $m \geq 2$, and thus, $g(w) = w^m$, as well as $w_n = g^{n}(w) = w^{m^n}$ for all $w \in D(0,\delta_g)$ and $n \in\NN$. 
    Fix any $\rho > \max\set{4A, 2^{d+1}}$ and deduce, as before, that
    \begin{equation}\label{eq:basin-at-infinity-3}
        \abs*{\sum_{j=0}^{d-1} a_j z^{j-d}} \leq A \frac{1}{\rho} \leq \frac{1}{4}
    \end{equation}
    for all $z \in \overline{D(0,\rho)}^c$. Fix any $R > \rho$ and consider the annulus $A(0;\rho,R) \coloneqq \set{ z \in \CC \mid \rho < \abs{z} < R}$.
    Observe that for $w \in D\paren*{0,\frac{\delta_g}{2}}$ and $z \in \overline{D(0,\rho)}^c$,
    \begin{equation}\label{eq:basin-at-infinity-4}
        \abs*{w \sum_{k = 0}^{\ell} h_k(w) z^{k - d}} \leq B \abs{w} \abs{z}^\ell.
    \end{equation}
    Let $ 0 < \delta = \delta(R) < \frac{\delta_g}{2} $ be small enough that $\delta\paren*{\paren*{\frac{3}{2}}^{d} R}^{\ell}< \min\set*{\frac{1}{4B}, 1}$.
    Then, for all $n \in \NN$ and all $(z,w) \in A(0;\rho,R) \times D(0,\delta)$, it follows that
    \begin{equation}\label{eq:basin-at-infinity-5}
        B\paren*{ \abs{w} \paren*{\paren*{\frac{3}{2}}^{d} \abs{z}}^{\ell}}^{m^n} < B \paren*{\min\set*{\frac{1}{4B}, 1}}^{m^n} \leq \frac{1}{4}.
    \end{equation}
    We prove by induction that for all $z \in A(0;\rho,R)$, $ w \in D(0,\delta)$, and $n \in \NN$, we have
    \begin{equation}\label{eq:basin-at-infinity-6}
        \paren*{\frac{1}{2}}^{\frac{d^{n+1} - 1}{d - 1}} \abs{z}^{d^n} \leq \abs{z_n} \leq \paren*{\frac{3}{2}}^{\frac{d^{n+1} - 1}{d - 1}} \abs{z}^{d^n}.
    \end{equation}
    Indeed, fix any $z \in A(0;\rho,R)$ and $w \in D(0,\delta)$.
    The case $n = 0$ is evident. 
    Assume that \eqref{eq:basin-at-infinity-6} holds for some $n \in \NN$. 
    Then, the $n$th iterate $z_n$ of $(z,w)$ satisfies
    \begin{equation*}
        \begin{split}
            \abs{z_n} \leq \paren*{\frac{3}{2}}^{\frac{d^{n+1} - 1}{d - 1}} \abs{z}^{d^n} &\leq \paren*{\frac{3}{2}}^{d^{n+1}} \abs{z}^{d^n} = \paren*{\paren*{\frac{3}{2}}^{d} \abs{z}}^{d^n}.
        \end{split}
    \end{equation*}
    Also observe that
    \begin{equation*}
        \abs{z_n} \geq \paren*{\frac{1}{2}}^{\frac{d^{n+1} - 1}{d - 1}} \abs{z}^{d^n} > \paren*{\frac{1}{2}}^{\frac{d^{n+1} - 1}{d - 1}} \rho^{d^n} \geq \rho
    \end{equation*}
    for $n \geq 1$, while $\abs{z_0} = \abs{z} > \rho$ trivially.
    From \eqref{eq:basin-at-infinity-4}, \eqref{eq:basin-at-infinity-5}, and $d \leq m$, it follows that
    \begin{alignat*}{1}
        \abs*{w_n \sum_{j = 0}^{\ell} h_j(w_n) z_n^{j - d}} &\leq B \abs{w_n} \abs{z_n}^\ell \\
        &\leq B \abs{w}^{m^n}\paren*{\paren*{\frac{3}{2}}^{d} \abs{z}}^{d^n \ell} \\
        &\leq B\paren*{ \delta \paren*{\paren*{\frac{3}{2}}^{d} R}^{\ell}}^{m^n} < \frac{1}{4}.
    \end{alignat*}
    We can apply \eqref{eq:basin-at-infinity-3}, and as in the first case, it follows that
    \begin{equation*}
        \paren*{\frac{1}{2}}^{\frac{d^{n+2} - 1}{d - 1}} \abs{z}^{d^{n+1}} \leq \frac{1}{2} \abs{z_n}^{d} \leq \abs{z_{n+1}} \leq \frac{3}{2} \abs{z_n}^d \leq \paren*{\frac{3}{2}}^{\frac{d^{n+2} - 1}{d - 1}} \abs{z}^{d^{n+1}},
    \end{equation*}
    because $z_{n+1} = f(z_n) + w_nh(z_n,w_n)$.
    This completes the induction.  

    To finish the proof, observe that \eqref{eq:basin-at-infinity-6} implies locally uniform divergence to infinity in the $z$-coordinate on the neighborhood $A(0;\rho,R) \times D(0, \delta)$ of $A(0;\rho,R) \times \set{0}$, since, having chosen $\rho > 2^{d+1}$, we obtain
    \begin{equation*}
        \paren*{\frac{1}{2}}^{\frac{d^{n+1} - 1}{d - 1}} \abs{z}^{d^n} \geq \paren*{\frac{1}{2}}^{d^{n+1}} \rho^{d^n} = \paren*{\frac{1}{2^d}\rho}^{d^n} > 2^{d^n} \to \infty
    \end{equation*}
    as $n \to \infty$.
    Furthermore, since $R > \rho$ was arbitrary, the absorbing domain $\overline{D(0,\rho)}^c$ of the basin at infinity satisfies $\overline{D(0,\rho)}^c \times \set{0} \subset \F(F)$, and hence, as in the first case, the basin at infinity is bulging.
\end{proof}

We now show the other implication.

\begin{lemma}\label{lemma:basin-at-inf-2}
    Under the hypotheses of \Cref{thm:basin-inf-classification}, the basin at infinity is non-bulging if $ m < d < \ell $.
\end{lemma}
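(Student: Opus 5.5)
The plan is to argue by contradiction. Because $m<d$, the contraction $\abs{w_n}$ toward the fiber is eventually beaten by the growth of $\abs{z_n}^{\ell-d}$. Hence near the fiber the perturbation term $w_nh_\ell(w_n)z_n^\ell$ eventually dominates $z_n^d$. This lets us produce points arbitrarily close to the fiber whose orbit passes exactly through $z=0$ at arbitrarily late times, which is incompatible with normality. As in the proof of \Cref{lemma:basin-inf-classification-oneway}, write $r=wh$ with $h=\sum_{j\le\ell}h_j(w)z^j$ and $h_\ell\not\equiv0$. Normalize $a_d=1$ and put $g$ in its local normal form ($\lambda w$ if $m=1$, $w^m$ if $m\ge2$). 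Fix $\rho$ large, so that $\overline{D(0,\rho)}^c$ is absorbing for the basin at infinity, and a point $z^*$ with $\abs{z^*}>\rho$.

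Suppose $(z^*,0)\in\F(F)$. Then on some polydisk $P=D(z^*,\e)\times D(0,\delta)$ a subsequence of iterates converges. On $P\cap\set{w=0}$ the $z$-coordinate tends to $\infty$, and limit functions on a Fatou component are either holomorphic or identically $\infty$. Hence every limit is $\infty$, and $F^n\to\infty$ uniformly on a slightly smaller polydisk $P'$. Since $w_n\to0$ uniformly there, we get $\abs{z_n}\to\infty$ uniformly on $P'$. In particular there is $N$ such that $z_n\neq0$ on $P'$ for all $n\ge N$. The goal is to contradict this.

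The core step is a growth comparison along the slice $\set{z=z^*}$. For $w\neq0$ small, the estimates of \eqref{eq:basin-at-infinity-6} continue to hold as long as $\abs{w_k}\abs{z_k}^{\ell-d}$ stays small. Here $\log\abs{w_k}\approx m^k\log\abs{w}$ (or $k\log\abs\lambda+\log\abs w$ when $m=1$), while $\log\abs{z_k}\approx d^k\log(c\abs{z^*})$. Because $d>m$ and $\ell>d$, the quantity
\begin{equation*}
\Theta_k(w)\coloneqq \log\abs{w_k}+(\ell-d)\log\abs{z_k}
\end{equation*}
tends to $+\infty$. Let $n=n(w)$ be the first time that $\abs{w_n}\abs{z_n}^{\ell-d}$ exceeds a fixed large constant $C$. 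As $w\to0$ this time tends to $\infty$. At time $n$ the map $\zeta\mapsto f(\zeta)+w_nh(\zeta,w_n)$ is a polynomial of degree $\ell$ whose dominant terms near $\abs\zeta\approx\abs{z_n}$ are $\zeta^d$ and $w_nh_\ell(w_n)\zeta^\ell$. I would choose $w$ so that $h_\ell(w_k)\neq0$ for all $k$; this is possible since the bad set is countable.

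Now vary $w$ on a small circle, or annulus $\set{\abs w\approx t}$, inside $D(0,\delta)$ while keeping $z=z^*$. Apply the argument principle to $w\mapsto z_{n+1}(z^*,w)$, where $z_{n+1}=z_n^d\bigl(1+w_nh_\ell(w_n)z_n^{\ell-d}+\text{small}\bigr)$. On the annulus the argument of $w_nz_n^{\ell-d}$ winds many times, roughly $m^n$ times from $w_n$, while the factor $z_n^{\ell-d}$ adds no winding since $z^*$ is fixed. Meanwhile its modulus crosses the value $1/\abs{h_\ell(0)}$ as $\abs w$ moves across the annulus. Rouché's theorem, comparing with $1+c\,w^{m^n}$, then gives a zero of $1+w_nh(z_n,w_n)z_n^{-d}+\dots$, i.e.\ a point $(z^*,w)\in P'$ with $z_{n+1}=0$. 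Letting $t\to0$ forces $n\to\infty$, and in particular $n+1\ge N$, which contradicts the uniform divergence. So $(z^*,0)\in\J(F)$, and the basin is non-bulging.

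The main obstacle is making this Rouché step rigorous. One needs uniform control of $z_n(z^*,w)$ and $w_n$ on the whole annulus up to the transition time. This requires a version of \eqref{eq:basin-at-infinity-6} valid while $\abs{w_k}\abs{z_k}^{\ell-d}$ is below $C$, and showing that $\arg z_n$ is nearly constant in $w$ there. One must also handle the lower-order terms $h_j$, $j<\ell$. The case $m=1$ needs a slightly different winding count, since $w_n=\lambda^nw$ winds only once; there I would instead move $z$ in a small disk around $z^*$ and use the $d^n$-fold winding of $z_n$ in $z$.
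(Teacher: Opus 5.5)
\textbf{There is a genuine gap: the dominant term is misidentified.} Your overall route is the paper's. You fix $z^*$, vary $w$ on a circle in the slice $\set{z=z^*}$, and apply the argument principle at the time when the perturbation overtakes $z_n^d$. This gives $\pi_z F^{n+1}(z^*,\widetilde w)=0$ with $\widetilde w\to0$ and $n$ arbitrarily large, and your contradiction set-up is fine.

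The error is that you treat $w h_\ell(w) z^\ell$ as the dominant perturbation and normalize by $1/\abs{h_\ell(0)}$. The hypotheses only give $r_\ell\not\equiv0$, so $h_\ell(0)$ may vanish, and then a term of \emph{lower} $z$-degree can dominate. Take $\ell\ge d+2$ and $r(z,w)=wz^{d+1}+w^{N}z^{\ell}$ with $N>\ell-d$. On a circle where $\abs{w_n}\abs{z_n}^{\ell-d}$ is of order a fixed constant (as your Rouché step needs), both relative terms $\abs{w_n}\abs{z_n}$ and $\abs{w_n}^N\abs{z_n}^{\ell-d}$ are tiny. So $z_{n+1}\approx z_n^d$ has no zero inside, and the real transition comes later, driven by $wz^{d+1}$.

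The missing idea is the selection in \Cref{lemma:sublemma-classification}:
\begin{itemize}
\item write $r_{d+\mu}(w)=w^{\nu}b_\mu(w)$ with $b_\mu(0)\neq0$;
\item choose $(\hat\nu,\hat\mu)$ maximizing $\mu/\nu$, breaking ties by larger $\mu$;
\item measure the transition by $\abs{w_n}\abs{z_n}^{\hat q}$ with $\hat q=\hat\mu/\hat\nu$.
\end{itemize}

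\textbf{The transition time must not depend on $w$.} Your first time $n(w)$ need not be constant on a circle. It also does not ensure that $\abs{w_{n-1}}\abs{z_{n-1}}^{\hat q}$ is small, which the growth estimates up to time $n$ require. The paper fixes $n\ge M$ first. It then picks $\delta$ with $C_2^{d^n}\abs{z_0}^{-\hat q d^n}<\delta^{m^n}<C_1^{d^n}\abs{z_0}^{-\sigma d^n}$, where $\sigma=\max\set{q,\tfrac md\hat q}$. This window is nonempty exactly because $\sigma<\hat q$, which is where $m<d$ enters.

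\textbf{Your winding step asks for more than needed.} You do not need $\arg z_n$ nearly constant, nor a comparison with $1+cw^{m^n}$. Also, the factor $z_n^{\ell-d}$ adds no winding not ``since $z^*$ is fixed'' ($z_n$ depends on $w$), but because of the argument principle. The paper writes $z_{n+1}=\gamma_1+\gamma_2$ with $\gamma_1=z_n^{d+\hat\mu}w_n^{\hat\nu}b_{\hat\mu}(w_n)(1+\phi)$ and $\abs{\gamma_2}<\abs{\gamma_1}$ on $\abs w=\delta$. Since $w\mapsto\pi_zF^n(z^*,w)$ is holomorphic on the disk and nonvanishing on its boundary, $\wind(z_n,0)\ge0$. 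Hence $\wind(z_{n+1},0)\ge\hat\nu m^n\ge1$.

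\textbf{Your $m=1$ fallback rests on a false count.} One turn suffices, so $m=1$ needs no separate treatment. Moving $z$ on a small circle about $z^*$ with $w$ fixed does not produce a $d^n$-fold winding. Since $z\mapsto z_n$ has no zeros in that disk, its winding number about $0$ is $0$. With $w_n$ constant, $\gamma_1$ then winds $(d+\hat\mu)\cdot0=0$ times, so this yields no zero.
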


\begin{proof}
    We assume that $1 \leq m < d < \ell$.

    There exist $0 < \delta_g < 1$ and a one-dimensional holomorphic conjugacy on $D(0,\delta_g)$ conjugating $g$ to $w \mapsto \lambda w$ for some $\lambda \in \DD \setminus\set{0}$ if $m =1$ or to $ w\mapsto w^m$ if $m \geq 2$.
    From now on, we work on $\CC \times D(0,\delta_g)$ with $g$ in this normal form.

    To prove that the basin at infinity of $f$ is non-bulging, we show that there exists $z_0$ in the basin at infinity of $f$ such that points arbitrarily close to $(z_0, 0)$ have orbits coming back close to the origin $(0,0)$.
    Since $0$ is an attracting fixed point of $g$, this is trivial for the $w$-coordinate.
    For the $z$-coordinate, we show that there exists $R > 0$ such that for every $z_0 \in D(0,R)^c$, every $\e > 0$, and every $M \in \NN $, there exist $\widetilde{w} = \widetilde{w}(\e,M) \in D(0,\e)$ and $n = n(\e,M) \geq M$ with
    \begin{equation}\label{eq:basin-at-infinity-7}
        \pi_z F^{n}(z_0,\widetilde{w}) = 0.
    \end{equation}
    This is sufficient to show non-bulging since it prevents uniform divergence to infinity on any neighborhood of $(z_0,0)$.

    To obtain these points $\widetilde{w}(\e,M)$, the key idea is to compare the growth of $f$, $r$, and $g$.
    This is also where the hypothesis $1 \leq m < d < \ell$ is used.
    This comparison is summarized in the following lemma, which we prove after finishing the proof of \Cref{lemma:basin-at-inf-2}.

    \begin{lemma}\label{lemma:sublemma-classification}
        There exists $R > 0$ with the following property. Let $z_0 \in D(0,R)^c$, $\e > 0$, and $M \in \NN$. Then there exist 
        \begin{itemize}
            \item a radius $0 < \delta < \min\set{\e,\delta_g}$ and an integer $n \geq M$,
            \item integers $\hat{\nu}, \hat{\mu} \geq 1$,
            \item a constant $0 < \alpha < 1$,
            \item holomorphic functions $\hat{r} \colon \CC \times D(0,\delta_g) \to \CC$ and $\phi \colon (\CC\setminus\set{0})\times (D(0,\delta')\setminus\set{0}) \to \CC$ for some $\delta' > \delta$, 
            \item  a holomorphic function $b_{\hat{\mu}} \colon D(0,\delta_g) \to \CC$ satisfying $b_{\hat{\mu}}(0) \neq 0$ and $b_{\hat{\mu}}(w) \in D\paren*{b_{\hat{\mu}}(0), \frac{\abs{b_{\hat{\mu}}(0)}}{2}}$ for all $w \in \overline{D(0,\delta)}$,
        \end{itemize}
        such that for all $w \in \partial D(0,\delta)$, writing $(z_n,w_n) = F^n(z_0,w)$, the following hold:
        \begin{enumerate}[label=(\roman*), itemsep=2ex, topsep=1ex]
            \item \label{prop:r-dominating}
            $\displaystyle \abs*{f(z_n) + r(z_n,w_n) - z_n^d\hat{r}(z_n,w_n)} \leq \alpha \abs{z_n^d\hat{r}(z_n,w_n)};$
            \item \label{prop:r-factorization}
            $\displaystyle \hat{r}(z_n,w_n) = w_n^{\hat{\nu}} z_n^{\hat{\mu}} b_{\hat{\mu}}(w_n) ( 1 + \phi(z_n,w_n) );$
            \item \label{prop:phi-small}
            $\displaystyle \abs{\phi(z_n,w_n)} \leq \tfrac{1}{2};$
            \item \label{prop:zn-nonzero}
            $\displaystyle z_n \neq 0.$
        \end{enumerate}
    \end{lemma}
    We now prove the existence of the $\widetilde{w} \in D(0,\e)$ asserted in \eqref{eq:basin-at-infinity-7}.
    Let $R > 0$ be as in \Cref{lemma:sublemma-classification}.
    Fix any $z_0 \in D(0,R)^c$, $\e > 0$, and $M \in \NN$, and let $\delta$, $n$, $\hat{\nu}$, $\hat{\mu}$, $\alpha$, $\hat{r}$, $\phi$, and $b_{\hat{\mu}}$ be as provided by \Cref{lemma:sublemma-classification} for these choices.
    For $k \in \NN$, consider the holomorphic map
    \begin{equation*}
        h_k \colon D(0,\delta_g) \to \CC, \qquad h_k(w) \coloneqq \pi_z F^k(z_0, w),
    \end{equation*}
    so that $F^k(z_0,w) = (h_k(w), g^k(w))$,
    and define the closed curves $z_k$ and $w_k$ via 
    \begin{equation*}
        (z_k(t),w_k(t)) \coloneqq (h_k(\delta e^{2\pi i t}), g^k(\delta e^{2\pi i t})) = F^k(z_0, \delta e^{2\pi i t})
    \end{equation*} 
    for $t \in [0,1]$.
    Since $D(0,\delta) \subset D(0,\e)$ and $n+1 > M$, in order to prove \eqref{eq:basin-at-infinity-7}, it suffices to show that $h_{n+1}$ has a zero in $D(0,\delta)$.
    By the argument principle, if the curve $z_{n+1}$ does not pass through $0$, then its winding number $\wind(z_{n+1}, 0)$ around $0$ is equal to the number of zeros of $h_{n+1}$ in $D(0,\delta)$, counted with multiplicity.
    Hence, it is enough to show that the curve $z_{n+1}$ does not pass through $0$ and that its winding number around $0$ satisfies $\wind(z_{n+1}, 0) > 0$.

    Since $(z_n(t), w_n(t)) = F^n(z_0, \delta e^{2\pi i t})$ with $\delta e^{2\pi i t} \in \partial D(0,\delta)$, properties~\ref{prop:r-dominating}--\ref{prop:zn-nonzero} of \Cref{lemma:sublemma-classification} hold at $(z_n(t), w_n(t))$ for all $t \in [0,1]$.
    In particular, $z_n$ does not pass through $0$ by property~\ref{prop:zn-nonzero}.
    Since $h_n$ is holomorphic on $D(0,\delta_g) \supset \overline{D(0,\delta)}$, the argument principle shows that $\wind(z_n, 0)$ equals the number of zeros of $h_n$ in $D(0,\delta)$, counted with multiplicity. 
    In particular, we have that
    \begin{equation}\label{eq:nonneg-winding}
        \wind(z_n, 0) \geq 0.
    \end{equation}
    Moreover, since $g$ is in normal form, $w_n(t)$ equals $\lambda^n \delta e^{2\pi i t}$ if $m = 1$ and $\delta^{m^n} e^{2\pi i m^n t}$ if $m \geq 2$.
    Hence, $w_n$ takes values in $\overline{D(0,\delta)} \setminus \set{0}$ and $\wind(w_n, 0) = m^n \neq 0$.

    By definition of $F$, we can write $z_{n+1} = f(z_n) + r(z_n, w_n) = \gamma_1 + \gamma_2$ for the closed curves
    \begin{equation*}
        \gamma_1(t) \coloneqq z_n(t)^d \hat{r}(z_n(t),w_n(t))
    \end{equation*}
    and
    \begin{equation*}
        \gamma_2(t) \coloneqq f(z_n(t)) + r(z_n(t), w_n(t)) - z_n(t)^d \hat{r}(z_n(t),w_n(t)).
    \end{equation*}
    We will reduce the winding number of $z_{n+1}$ to that of $\gamma_1$.
    First, observe that by property~\ref{prop:r-factorization}, $\gamma_1$ factorizes as
    \begin{equation*}
        \gamma_1(t) = z_n(t)^{d+\hat{\mu}} w_n(t)^{\hat{\nu}} b_{\hat{\mu}}(w_n(t))  (1+\phi(z_n(t),w_n(t)))
    \end{equation*}
    for all $t \in [0,1]$.
    None of the four factors passes through $0$. 
    For $z_n$, this follows by property~\ref{prop:zn-nonzero}, and for $w_n$, this was shown above. 
    Next, $b_{\hat{\mu}}(w_n)$ takes values in the disk $D\paren*{b_{\hat{\mu}}(0), \abs{b_{\hat{\mu}}(0)}/2}$, which does not contain $0$ since $b_{\hat{\mu}}(0) \neq 0$. 
    Similarly, $1 + \phi(z_n,w_n)$ only takes values in $\overline{D(1,1/2)}$ by property~\ref{prop:phi-small}.
    In particular, since each of the last two factors stays in a disk away from $0$, their winding numbers around $0$ vanish.
    As winding numbers around $0$ add under multiplication, we obtain using \eqref{eq:nonneg-winding} that
    \begin{equation*}
        \wind(\gamma_1, 0) = (d+\hat{\mu}) \wind(z_n,0) + \hat{\nu} \wind(w_n,0) \geq \hat{\nu} m^n > 0.
    \end{equation*}
    Finally, property~\ref{prop:r-dominating} yields that $\abs{\gamma_2(t)} \leq \alpha \abs{\gamma_1(t)} < \abs{\gamma_1(t)}$ for all $t \in [0,1]$, where we used that $0 < \alpha < 1$ and that $\gamma_1$ does not pass through $0$ as established above.
    Hence, the map $(t,u) \mapsto \gamma_1(t) + u\gamma_2(t)$, where $t,u\in[0,1]$, is a homotopy in $\CC\setminus\set{0}$ between $\gamma_1$ and $z_{n+1} = \gamma_1 + \gamma_2$.
    In particular, $z_{n+1}$ does not pass through $0$, and since the winding number is invariant under homotopy,
    \begin{equation*}
        \wind(z_{n+1}, 0) = \wind(\gamma_1, 0) > 0.
    \end{equation*}
    By the argument principle, $h_{n+1}$ has a zero $\widetilde w \in D(0,\delta)$, that is, $\pi_z F^{n+1}(z_0, \widetilde w) = 0$, which finishes the proof.
\end{proof}

We now give the proof of \Cref{lemma:sublemma-classification}.

\begin{proof}[Proof of {Lemma~\ref{lemma:sublemma-classification}}]
    The proof is elementary but computationally involved. 
    The main task is to find suitable bounds and normalizations for the terms of $f$, $g$, and $r$, which allow us to reduce the analysis to the terms that dominate the growth.

    We first consider the case where $g$ has a superattracting fixed point at $0$, that is, $m \geq 2$.
    The attracting case, $m=1$, is very similar.
    We sketch it at the end of the proof.

    Write $f(z) = \sum_{j=0}^{d} a_j z^j$ for $a_0, \dots, a_d \in \CC$, $a_d \neq 0$. 
    Up to rescaling in $z$, we may assume that $a_d = 1$.
    The function $r$ can be written as $ r(z,w) = \sum_{j = 0}^{\ell} r_j(w) z^j$ where each $r_j$ is holomorphic with $r_j(0) = 0$ and $r_\ell \not\equiv 0$.

    Define
    \begin{equation*}
        \hat{r}(z,w) \coloneqq \frac{r(z,w)- \sum_{j = 0}^{d} r_j(w) z^j}{z^d} = \sum_{j=1}^{\ell-d} r_{d+j}(w) z^{j}.
    \end{equation*}
    Let $\nu_{d+j} \geq 1$ denote the order of the zero of the function $r_{d+j}$ at $0$.
    By convention, we set $\nu_{d+j} = \infty$ if $r_{d+j} \equiv 0$.
    Thus, if $\nu_{d+j} < \infty$, then $r_{d+j}(w) = w^{\nu_{d+j}} b_{j}(w)$ for some holomorphic function $b_{j}\colon D(0,\delta_g) \to \CC$ with $b_{j}(0) \neq 0$.
    Set
    \begin{equation*}
        \mathcal{R} \coloneqq \set{ (\nu_{d+j}, j) \mid j \in\set{1, \dots, \ell-d} \text{ with } \nu_{d+j} \neq \infty}.
    \end{equation*}
    This set is nonempty since $r_\ell \not\equiv 0$ implies $\nu_{\ell} \neq \infty $.
    Furthermore, observe that $\mathcal{R}$ has at most $\ell-d$ elements.
    The function $\hat{r}$ can then be written as
    \begin{equation*}
        \hat{r}(z,w) = \sum_{(\nu,\mu) \in \mathcal{R}} b_{\mu}(w) w^{\nu}z^{\mu}.
    \end{equation*}

    Later, we will need the term that contributes the most to the perturbation, which is the term with the largest ratio of the exponent of $z$ to that of $w$. 
    To that end, let $(\hat{\nu}, \hat{\mu}) \in \mathcal{R}$ be such that
    \begin{enumerate}
        \item $\displaystyle \frac{\hat{\mu}}{\hat{\nu}} \geq \frac{\mu}{\nu}$ for all $(\nu,\mu) \in \mathcal{R}$, and 
        \item $\displaystyle \hat{\mu} \geq \mu$ for all $(\nu,\mu) \in \mathcal{R}$ with $\displaystyle \frac{\hat{\mu}}{\hat{\nu}} = \frac{\mu}{\nu}$.
    \end{enumerate}
    Set $\hat{q} \coloneqq \frac{\hat{\mu}}{\hat{\nu}}$.
    We define $\mathcal{R}_{1} \coloneqq \set{(\nu,\mu) \in \mathcal{R} \mid \frac{\hat{\mu}}{\hat{\nu}} = \frac{\mu}{\nu} \text{ and } \mu < \hat{\mu}}$ and $\mathcal{R}_{2} \coloneqq \set{(\nu,\mu) \in \mathcal{R} \mid \frac{\hat{\mu}}{\hat{\nu}} > \frac{\mu}{\nu}} $.
    Then, $\mathcal{R}$ can be written as the disjoint union $\mathcal{R} = \set*{(\hat{\nu}, \hat{\mu})} \sqcup \mathcal{R}_1 \sqcup \mathcal{R}_2$.
    If $\mathcal{R}_1$ or $\mathcal{R}_2$ is empty, the corresponding steps in the proof can be skipped.

    By continuity of the functions $b_{\mu}$, there exists $\delta_b > 0$ such that for all $w \in D(0,\delta_b)$ and $(\nu,\mu) \in \mathcal{R}$,
    \begin{equation}\label{eq:b-nearconstant}
        \abs{b_{\mu}(w) - b_{\mu}(0) } < \frac{\abs{b_{\mu}(0)}}{2}.
    \end{equation}
    
    Next, we set $B_{+} \coloneqq \frac{3}{2}  \max_{(\nu,\mu) \in \mathcal{R}}  \abs{b_{\mu}(0)} > 0 $ and $B_{-} \coloneqq \frac{1}{2} \min_{(\nu,\mu) \in \mathcal{R}} \abs{b_{\mu}(0)} > 0$.
    Observe that $\frac{2(\ell-d)B_{+}}{B_{-}} \geq 1$.
    Replacing $\frac{3}{2}$ by a larger constant if necessary, we may assume that $B_{+} > 1$.
    Let $0 < C < 1$ and observe that $\frac{C}{2B_{+}(\ell-d)} < 1$.
    Since $r_j(0) = 0$ for all $j \in \set{0,\dots, d}$, continuity yields the existence of $\delta_r > 0$ such that
    \begin{equation}\label{eq:basin-at-infinity-8}
        \sum_{j=0}^{d} \max_{\abs{w} \leq \delta_r} \abs{r_j(w)} \leq \frac{C}{4}.
    \end{equation}

    Set $A \coloneqq \sum_{j=0}^{d-1} \abs{a_j}$ and fix any $R_1 > \max\set*{\frac{4A}{C}, 1}$. 
    Then,
    \begin{equation}\label{eq:basin-at-infinity-10}
        \sum_{j=0}^{d-1} \abs{a_j} R_1^{j-d} \leq A \frac{1}{R_1} \leq \frac{C}{4}.
    \end{equation}

    Set $C_f \coloneqq A + 1$ so that $\abs{f(z)} \leq C_f \abs{z}^{d}$ for all $\abs{z} \geq 1$.
    We define 
    \begin{equation}\label{eq:basin-at-infinity-def-q}
        q \coloneqq \max\paren*{\set*{\frac{\mu}{\nu} \,\middle|\, (\nu,\mu) \in \mathcal{R}_2} \cup \set{0}}.
    \end{equation}
    Observe that $q < \hat{q} = \frac{\hat{\mu}}{\hat{\nu}}$.
    Furthermore, set $\sigma \coloneqq \max\set*{q, \frac{m}{d} \hat{q}} < \hat{q}$.

    Let $0 < \alpha < 1$.
    Since $d > m$, there exist constants $0 < C_1 < 1$ and $C_2 > 1$ such that for all $n \in \NN$,
    \begin{align}
    C_1^{d^n} &\leq \paren*{\frac{C}{2B_{+}(\ell-d)}}^{m^n}
                    (1+C)^{-\frac{d^{n-1} - 1}{d - 1} m\hat{q}},
        \label{eq:basin-at-infinity-C1-a}\\
    C_1^{d^n} &\leq \frac{B_{-}}{2(\ell-d)B_{+}}\,
                    (1+C)^{-\frac{d^n - 1}{d - 1} q},
        \label{eq:basin-at-infinity-C1-b}\\
    C_2^{d^n} &\geq \paren*{\frac{2(C_f + 1)}{\alpha B_{-}}}^{\frac{1}{\hat{\nu}}}
                    \paren*{\frac{1+C}{1-C}}^{d\frac{d^n-1}{\hat{\nu}(d-1)}}
                    (1-C)^{-\frac{d^n - 1}{d - 1} \hat{q}},
        \quad\text{and} \label{eq:basin-at-infinity-C2-a}\\
    C_2^{d^n} &\geq \frac{2(\ell-d)B_{+}}{B_{-}}\,
                    (1-C)^{-\frac{d^n - 1}{d - 1} \hat{q}}.
        \label{eq:basin-at-infinity-C2-b}
    \end{align}
    Indeed, every factor on the right-hand sides is either a constant or of the form $\kappa^{m^n}$ or $\kappa^{\frac{d^n-1}{d-1}}$ for some $\kappa > 0$. 
    Since $d > m$, each such factor lies between $\kappa_1^{d^n}$ and $\kappa_2^{d^n}$ for suitable constants $0 < \kappa_1 \leq 1 \leq \kappa_2$.

    There exists $R_2 > 0$ large enough so that $\frac{C_1}{C_2} R_2^{\hat{q} - \sigma} > 1$ and $\frac{C_1}{R_2^{\sigma}} < 1$. 
    Hence, for all $z \in D(0,R_2)^c$,
    \begin{equation}\label{eq:basin-at-infinity-11}
        \paren*{\frac{C_1}{C_2}\abs{z}^{\hat{q} - \sigma}}^{d^n} \to \infty
    \end{equation}
    as $n \to\infty$, and, since $d > m$, for any $\e' > 0$,
    \begin{equation}\label{eq:basin-at-infinity-12}
        \frac{\paren*{\frac{C_1}{\abs{z}^{\sigma}}}^{d^n}}{\paren*{\frac{\e'}{2}}^{m^n}} = \paren*{ \frac{C_1}{\abs{z}^{\sigma}} \paren*{\frac{2}{\e'}}^{\paren*{\frac{m}{d}}^{n}} }^{d^n} \to 0
    \end{equation}
    as $n \to \infty$.
    Set $R \coloneqq \max\set{R_1, R_2, \frac{1}{1-C}}$ and fix any $z_0 \in D(0,R)^{c}$.
    By \eqref{eq:basin-at-infinity-11}, there exists $N_1 \in \NN $ such that for all $n \geq N_1$,
    \begin{equation}\label{eq:basin-at-infinity-N1}
        \frac{C_2^{d^n}}{\abs{z_0}^{\hat{q}d^n}} < \frac{C_1^{d^n}}{\abs{z_0}^{\sigma d^n}}.
    \end{equation}
    Moreover, if we set $\e' \coloneqq \min\set{\e, \delta_b, \delta_r, \delta_g,1}$, then \eqref{eq:basin-at-infinity-12} yields $N_2 \in \NN$ such that for all $n \geq N_2$,
    \begin{equation}\label{eq:basin-at-infinity-N2}
        \frac{C_1^{d^n}}{\abs{z_0}^{\sigma d^n}} < \paren*{\frac{\e'}{2}}^{m^n}.
    \end{equation}
    Set $N \coloneqq \max\set{N_1, N_2, M}$, and fix any $n \geq N$.

    By \eqref{eq:basin-at-infinity-N1} and \eqref{eq:basin-at-infinity-N2}, there exists $0 < \delta < \frac{\e'}{2} < \min\set{\e, \delta_b, \delta_r, \delta_g, 1}$ such that
    \begin{equation*}
        \frac{C_2^{d^n}}{\abs{z_0}^{\hat{q}d^n}} < \delta^{m^n} < \frac{C_1^{d^n}}{\abs{z_0}^{\sigma d^n}}.
    \end{equation*}
    By \eqref{eq:basin-at-infinity-C1-a}, \eqref{eq:basin-at-infinity-C1-b}, \eqref{eq:basin-at-infinity-C2-a}, and \eqref{eq:basin-at-infinity-C2-b} respectively, it follows that
    \begin{align}
        \delta^{m^n} (1+C)^{\frac{d^{n-1} - 1}{d - 1} m\hat{q}} \abs{z_0}^{d^{n-1} m\hat{q}}
            &\leq \paren*{\frac{C}{2B_{+}(\ell-d)}}^{m^n},
            \label{eq:basin-at-infinity-cond1}\\
        \delta^{m^n} (1+C)^{\frac{d^n - 1}{d - 1} q} \abs{z_0}^{d^n q}
            &\leq \frac{B_{-}}{2(\ell-d)B_{+}},
            \label{eq:basin-at-infinity-cond2}\\
        \delta^{m^n} (1-C)^{\frac{d^n - 1}{d - 1} \hat{q}} \abs{z_0}^{d^n \hat{q}}
            &\geq \paren*{\frac{2(C_f + 1)}{\alpha B_{-}}}^{\frac{1}{\hat{\nu}}}
                \paren*{\frac{1+C}{1-C}}^{d\frac{d^n-1}{\hat{\nu}(d-1)}},
            \quad\text{and} \label{eq:basin-at-infinity-cond3}\\
        \delta^{m^n} (1-C)^{\frac{d^n - 1}{d - 1} \hat{q}} \abs{z_0}^{d^n \hat{q}}
            &\geq \frac{2(\ell-d)B_{+}}{B_{-}},
            \label{eq:basin-at-infinity-cond4}
    \end{align}
    where we used that $\sigma = \max\set*{q, \frac{m}{d} \hat{q}}$ and $\abs{z_0} \geq 1$.

    In the following, for $w \in D(0,\delta_g)$, we denote the $k$th iterate of $(z_0, w)$ by $(z_k,w_k) = F^k(z_0,w)$.
    The preparations above yield a growth estimate for the first $n$ iterates as described in the following claim.
    \begin{claim}\label{claim:basin-at-infinity-1}
        For all $k \in \set{0, \dots, n}$ and all $w \in \overline{D(0,\delta)}$, it holds that
        \begin{equation}\label{eq:basin-at-infinity-13}
            (1 - C)^{\frac{d^k - 1}{d - 1}} \abs{z_0}^{d^k} \leq \abs{z_k} \leq (1 + C)^{\frac{d^k - 1}{d - 1}} \abs{z_0}^{d^k}.
        \end{equation}
    \end{claim}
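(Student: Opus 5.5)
The plan is to prove \eqref{eq:basin-at-infinity-13} by induction on $k$, for fixed $w \in \overline{D(0,\delta)}$. The base case $k=0$ is trivial. For the step, assume the bound holds for some $k \le n-1$. Write
\begin{equation*}
    z_{k+1} = z_k^d\paren*{1 + \sum_{j=0}^{d-1} a_j z_k^{j-d} + \sum_{j=0}^{d} r_j(w_k) z_k^{j-d} + \hat r(z_k,w_k)}.
\end{equation*}
It then suffices to show that the bracketed perturbation has modulus at most $C$. Indeed, this gives $(1-C)\abs{z_k}^d \le \abs{z_{k+1}} \le (1+C)\abs{z_k}^d$. Inserting the inductive bounds produces the exponent $d\cdot\frac{d^k-1}{d-1}+1 = \frac{d^{k+1}-1}{d-1}$, as required.

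The three terms are handled as follows.

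\textbf{First term.} From the lower inductive bound and $\abs{z_0} \ge R \ge \frac{1}{1-C}$, we get $\abs{z_k} \ge (1-C)^{\frac{d^k-1}{d-1}}\abs{z_0}^{d^k} \ge \abs{z_0} \ge R_1$. Note $(1-C)\abs{z_0}\ge 1$, so $\abs{z_k}^{\ldots}$ grows; checking this inequality is routine. Then \eqref{eq:basin-at-infinity-10} bounds the first sum by $C/4$.

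\textbf{Second term.} Since $\abs{w_k} \le \delta < \delta_r$ and $\abs{z_k} \ge 1$, \eqref{eq:basin-at-infinity-8} bounds the second sum by $C/4$.

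\textbf{Third term.} The remaining task is to bound $\abs{\hat r(z_k,w_k)} \le C/2$ for $k \le n-1$. Using \eqref{eq:b-nearconstant}, we have $\abs{\hat r(z_k,w_k)} \le B_+\sum_{(\nu,\mu)\in\mathcal R}\abs{w_k}^\nu\abs{z_k}^\mu$. With $\abs{w_k} \le \delta^{m^k}$ (normal form $g(w)=w^m$, $\delta<1$), each term satisfies
\begin{equation*}
    \abs{w_k}^\nu\abs{z_k}^\mu \le \paren*{\delta^{m^k}\abs{z_k}^{\mu/\nu}}^{\nu} \le \paren*{\delta^{m^k}\abs{z_k}^{\hat q}}^{\nu},
\end{equation*}
provided $\abs{z_k} \ge 1$ and the base is at most $1$. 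Because $\mathcal R$ has at most $\ell-d$ elements, it suffices to show $\delta^{m^k}\paren*{(1+C)^{\frac{d^k-1}{d-1}}\abs{z_0}^{d^k}}^{\hat q} \le \frac{C}{2B_+(\ell-d)} < 1$ for all $k \le n-1$; then raising to the power $\nu\ge1$ only helps.

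For $k=n-1$ this is exactly \eqref{eq:basin-at-infinity-cond1}, after taking $m$-th roots. Precisely, \eqref{eq:basin-at-infinity-cond1} reads $\paren*{\delta^{m^{n-1}}(1+C)^{\frac{d^{n-1}-1}{d-1}\hat q}\abs{z_0}^{d^{n-1}\hat q}}^m \le \paren*{\frac{C}{2B_+(\ell-d)}}^{m^n}$. Taking $m$-th roots gives the bound with right-hand side $\paren*{\frac{C}{2B_+(\ell-d)}}^{m^{n-1}}$, which is at most $\frac{C}{2B_+(\ell-d)}$.

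\textbf{Main obstacle.} The expected difficulty is propagating this bound to smaller $k$. The idea is that the quantity $Q_k \coloneqq \delta^{m^k}\paren*{(1+C)^{\frac{d^k-1}{d-1}}\abs{z_0}^{d^k}}^{\hat q}$ satisfies $Q_{k+1} \ge Q_k^{m}\cdot(\text{stuff}\ge 1)$. Since $d>m$, the $z$-part grows faster than the $w$-part decays. Concretely, I expect $Q_{k+1}/Q_k^m = \paren*{(1+C)\abs{z_0}^{d^k(d-m)}\cdots}^{\hat q}\ge 1$ to follow from $\abs{z_0}\ge 1$ and the exponent identity. Hence $Q_k \le Q_{n-1}^{m^{-(n-1-k)}}$, and as $Q_{n-1}<1$, roots of a number below $1$ stay below $1$. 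This alone does not give $\le \frac{C}{2B_+(\ell-d)}$. The fix I would try is to use the stronger form above: $Q_{n-1}\le \kappa^{m^{n-1}}$ with $\kappa \coloneqq \frac{C}{2B_+(\ell-d)}$. Then $Q_k \le \kappa^{m^{k}} \le \kappa$, which is exactly why \eqref{eq:basin-at-infinity-C1-a} was phrased with the power $m^n$. Verifying the monotonicity $Q_{k+1}\ge Q_k^m$ carefully, including the exponent bookkeeping of $(1+C)$, is the step needing the most care. The upper bound $\abs{z_k} \le (1+C)^{\cdots}\abs{z_0}^{d^k}$ used inside $Q_k$ is available from the induction hypothesis.
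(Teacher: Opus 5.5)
Your proposal is correct and follows the paper's proof. Both use the same induction, the same three-term split controlled by \eqref{eq:basin-at-infinity-10}, \eqref{eq:basin-at-infinity-8}, and a bound $\abs{\hat{r}(z_k,w_k)} \leq C/2$ taken from \eqref{eq:basin-at-infinity-cond1}; the only difference is that the paper compares level $k$ to level $n$ in one step, via $\abs{w_k} = \abs{w_n}^{1/m^{n-k}}$ and $d^k m^{n-k} \leq d^{n-1} m$, whereas you iterate $Q_{k+1} \geq Q_k^m$ backwards from $k = n-1$. The step you flagged does hold, since $Q_{k+1}/Q_k^m = (1+C)^{\hat{q}\frac{d^k(d-m)+m-1}{d-1}} \abs{z_0}^{\hat{q} d^k (d-m)} \geq 1$ for $d > m \geq 1$ and $\abs{z_0} \geq 1$, which gives $Q_k \leq \kappa^{m^k} \leq \kappa$ for all $k \leq n-1$.
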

    \begin{subproof}
        We proceed by induction.
        Fix any $w \in \overline{D(0,\delta)}$.
        The base case, $k = 0$, is evident.
        Assume that the statement is true for some $0 \leq k \leq n - 1$.
        In particular, it holds that $\abs{z_k} \geq \abs{z_0} \geq R$ since we assumed that $R (1-C) \geq 1$.
        Furthermore,
        \begin{equation*}
            \abs{\hat{r}(z_k, w_k)} = \abs*{\sum_{(\nu,\mu) \in \mathcal{R}} b_{\mu}(w_k) w_k^{\nu} z_k^{\mu} }  
            \leq \sum_{(\nu,\mu) \in \mathcal{R}} \abs{b_{\mu}(w_k)} \abs{w_k}^{\nu} \abs{z_k}^{\mu} 
            \leq B_{+} \sum_{(\nu,\mu) \in \mathcal{R}} \abs{w_k}^{\nu} \abs{z_k}^{\mu}.
        \end{equation*}
        Then, for any $(\nu,\mu)\in\mathcal{R}$,
        \begin{align*}
            \abs{w_k}^{\nu} \abs{z_k}^{\mu} &\leq \abs{w_n}^{\frac{\nu}{m^{n-k}}} (1+C)^{\frac{d^k - 1}{d - 1}\mu} \abs{z_0}^{d^k\mu} \\
            &= \paren*{\abs{w_n} (1+C)^{\frac{d^k - 1}{d - 1}m^{n-k}\frac{\mu}{\nu}} \abs{z_0}^{d^km^{n-k}\frac{\mu}{\nu}}}^{\frac{\nu}{m^{n-k}}}
        \end{align*}
        and using \eqref{eq:basin-at-infinity-cond1},
        \begin{align*}
            \abs{w_n} (1+C)^{\frac{d^k - 1}{d - 1}m^{n-k}\frac{\mu}{\nu}} \abs{z_0}^{d^km^{n-k}\frac{\mu}{\nu}} &\leq \delta^{m^n} (1+C)^{\frac{d^{n-1} - 1}{d - 1}m\frac{\mu}{\nu}} \abs{z_0}^{d^{n-1} m \frac{\mu}{\nu}} \\
            &\leq \delta^{m^n} (1+C)^{\frac{d^{n-1} - 1}{d - 1}m \hat{q}} \abs{z_0}^{d^{n-1} m \hat{q}} \\
            &\leq \paren*{\frac{C}{2B_{+}(\ell-d)}}^{m^n}.
        \end{align*}
        Thus, it follows from $\frac{C}{2B_{+}(\ell-d)} < 1$ that
        \begin{align*}
            \abs{w_k}^{\nu} \abs{z_k}^{\mu} &\leq \paren*{\frac{C}{2B_{+}(\ell-d)}}^{m^k \nu} \leq \frac{C}{2B_{+}(\ell-d)}.
        \end{align*}
        Together, the above inequalities yield
        \begin{align*}
            \abs{\hat{r}(z_k, w_k)} &\leq B_{+} \sum_{(\nu,\mu) \in \mathcal{R}} \abs{w_k}^{\nu} \abs{z_k}^{\mu} \leq B_{+} \sum_{(\nu,\mu) \in \mathcal{R}} \frac{C}{2B_{+}(\ell-d)} \leq \frac{C}{2}.
        \end{align*}
        Consequently, combining this inequality with \eqref{eq:basin-at-infinity-8} and \eqref{eq:basin-at-infinity-10}, we obtain that
        \begin{align*}
            \abs*{z_{k+1}} &= \abs{z_k}^d \abs*{1 + \sum_{j=0}^{d-1} a_j z_k^{j-d} + \sum_{j=0}^{d} r_j(w_k) z_k^{j-d} + \hat{r}(z_k,w_k)} \\
            &\leq \abs{z_k}^d \paren*{1 + \sum_{j=0}^{d-1}\abs{a_j} \abs{z_k}^{j-d} + \sum_{j=0}^{d} \abs{r_j(w_k)} \abs{z_k}^{j-d} + \abs*{\hat{r}(z_k,w_k)}} \\
            &\leq \abs{z_k}^d \paren*{1 + \sum_{j=0}^{d-1}\abs{a_j} R^{j-d} + \sum_{j=0}^{d} \abs{r_j(w_k)} R^{j-d} + \frac{C}{2}} \\
            &\leq \abs{z_k}^d \paren*{1 + \frac{C}{4} + \frac{C}{4} + \frac{C}{2}} = (1+C)\abs{z_k}^d
        \end{align*}
        and analogously,
        \begin{align*}
            \abs*{z_{k+1}} &\geq \abs{z_k}^d \paren*{1 - \sum_{j=0}^{d-1}\abs{a_j} \abs{z_k}^{j-d} - \sum_{j=0}^{d} \abs{r_j(w_k)} \abs{z_k}^{j-d} - \abs*{\hat{r}(z_k,w_k)}} \\
            &\geq \abs{z_k}^d \paren*{1 - \frac{C}{4} - \frac{C}{4} - \frac{C}{2}} = (1-C)\abs{z_k}^d.
        \end{align*}
        Together with the induction hypothesis, we obtain
        \begin{align*}
            \abs*{z_{k+1}} &\leq (1+C)\abs{z_k}^d \leq (1+C) \paren*{(1 + C)^{\frac{d^k - 1}{d - 1}} \abs{z_0}^{d^k}}^d = (1 + C)^{\frac{d^{k+1} - 1}{d - 1}} \abs{z_0}^{d^{k+1}}
        \end{align*}
        and similarly,
        \begin{equation*}
            \abs{z_{k+1}} \geq (1 - C)^{\frac{d^{k+1} - 1}{d - 1}} \abs{z_0}^{d^{k+1}},
        \end{equation*}
        which completes the induction.
    \end{subproof}
    \textit{Proof of Lemma~{\ref{lemma:sublemma-classification}} (continued).}
    \Cref{claim:basin-at-infinity-1} establishes the behavior for the first $n$ iterates.
    In particular, it yields property~\ref{prop:zn-nonzero} of \Cref{lemma:sublemma-classification}.
    
    We now turn to the $(n+1)$th iterate, whose behavior is controlled by one dominating term, given by the exponents $(\hat{\nu}, \hat{\mu})$.
    \begin{claim}\label{claim:basin-at-infinity-2}
        For any $w \in \partial D(0,\delta)$, it holds that
        \begin{equation}\label{eq:basin-at-infinity-14}
            \abs*{f(z_n) + r(z_n,w_n) - z_n^d \hat{r}(z_n,w_n)} = \abs*{f(z_n) + \sum_{j = 0}^{d} r_j(w_n) z_n^j} \leq \alpha \abs{z_n^d \hat{r}(z_n,w_n)}
        \end{equation}
        and that
        \begin{equation}\label{eq:basin-at-infinity-15}
            \hat{r}(z_n,w_n) = w_n^{\hat{\nu}} z_n^{\hat{\mu}} b_{\hat{\mu}}(w_n) ( 1 + \phi(z_n,w_n) )
        \end{equation}
        for a holomorphic function $\phi$ defined on $(\CC\setminus\set{0})\times (D(0,\delta_b)\setminus\set{0})$ satisfying $\abs{\phi(z_n,w_n)} \leq \frac{1}{2}$ for all $w \in \partial D(0,\delta)$.
    \end{claim}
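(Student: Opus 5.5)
The proof works at time $n$ and uses the two-sided growth estimate \eqref{eq:basin-at-infinity-13} from \Cref{claim:basin-at-infinity-1} for $k=n$, together with $\abs{w_n}=\delta^{m^n}$ for $w\in\partial D(0,\delta)$ (normal form $g(w)=w^m$). The key quantity is
\begin{equation*}
    P \coloneqq \abs{w_n}\,\abs{z_n}^{\hat q}.
\end{equation*}
Combining \eqref{eq:basin-at-infinity-13} with \eqref{eq:basin-at-infinity-cond3} and \eqref{eq:basin-at-infinity-cond4} gives lower bounds for $P$. The dominant monomial of $\hat r$ then has modulus $\abs{w_n}^{\hat\nu}\abs{z_n}^{\hat\mu}=P^{\hat\nu}$. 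Every other term of $\hat r$, and the low-order part of the first coordinate, will be compared against this monomial.

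\emph{Factorization \eqref{eq:basin-at-infinity-15}.} On $(\CC\setminus\set{0})\times(D(0,\delta_b)\setminus\set{0})$, where $b_{\hat\mu}\neq 0$ by \eqref{eq:b-nearconstant}, I define
\begin{equation*}
    \phi(z,w)\coloneqq \sum_{(\nu,\mu)\in\mathcal R_1\sqcup\mathcal R_2}\frac{b_\mu(w)}{b_{\hat\mu}(w)}\,w^{\nu-\hat\nu}z^{\mu-\hat\mu}.
\end{equation*}
This is holomorphic, and \eqref{eq:basin-at-infinity-15} holds by construction. By \eqref{eq:b-nearconstant}, $\abs{b_\mu/b_{\hat\mu}}\le B_+/B_-$. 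Since there are at most $\ell-d$ terms, it suffices to show that each monomial ratio is at most $\frac{B_-}{2(\ell-d)B_+}$.

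\emph{Terms in $\mathcal R_1$.} Here $\nu=t\hat\nu$ and $\mu=t\hat\mu$ with $0<t<1$, so the ratio equals $P^{-\hat\nu(1-t)}$. By \eqref{eq:basin-at-infinity-cond4} and the lower bound in \eqref{eq:basin-at-infinity-13}, $P\ge \frac{2(\ell-d)B_+}{B_-}\ge 1$. Because $\hat\nu(1-t)>0$ is a positive integer difference divided appropriately, I expect $\hat\nu(1-t)\ge 1$, which would give the required bound. If this fails, I would instead shrink using a slightly strengthened \eqref{eq:basin-at-infinity-C2-b}.

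\emph{Terms in $\mathcal R_2$.} Here $\mu/\nu\le q<\hat q$. I write the ratio as
\begin{equation*}
    \frac{\bigl(\abs{w_n}\abs{z_n}^{\mu/\nu}\bigr)^{\nu}}{P^{\hat\nu}}.
\end{equation*}
For the numerator, the upper bound in \eqref{eq:basin-at-infinity-13} together with \eqref{eq:basin-at-infinity-cond2} and $\abs{z_n}\ge 1$ gives $\abs{w_n}\abs{z_n}^{\mu/\nu}\le\abs{w_n}\abs{z_n}^{q}\le\frac{B_-}{2(\ell-d)B_+}<1$, so the numerator to the power $\nu\ge1$ is at most this constant. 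For the denominator, $P^{\hat\nu}\ge1$. Together these give $\abs{\phi}\le\frac12$.

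\emph{Domination \eqref{eq:basin-at-infinity-14}.} For the left-hand side, $\abs{z_n}\ge1$ together with \eqref{eq:basin-at-infinity-8} gives
\begin{equation*}
    \abs*{f(z_n)+\sum_{j\le d}r_j(w_n)z_n^j}\le (C_f+1)\abs{z_n}^d.
\end{equation*}
For the right-hand side, the factorization, $\abs{1+\phi}\ge\frac12$ and $\abs{b_{\hat\mu}}\ge B_-$ give
\begin{equation*}
    \abs{z_n^d\hat r}\ge \tfrac12 B_-\,\abs{z_n}^d\,P^{\hat\nu}.
\end{equation*}
It therefore suffices that $P^{\hat\nu}\ge \frac{2(C_f+1)}{\alpha B_-}$. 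This follows from \eqref{eq:basin-at-infinity-cond3} and the lower bound of \eqref{eq:basin-at-infinity-13}, since the extra factor $\bigl(\frac{1+C}{1-C}\bigr)^{\cdots}\ge1$ only helps.

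The main obstacle is exponent bookkeeping. One must check that the powers $(1\pm C)^{\frac{d^n-1}{d-1}(\cdot)}$ coming from \eqref{eq:basin-at-infinity-13} match those built into \eqref{eq:basin-at-infinity-cond2}--\eqref{eq:basin-at-infinity-cond4}. One must also settle the $\mathcal R_1$ exponent, i.e.\ whether the power $\hat\nu(1-t)$ is at least $1$.
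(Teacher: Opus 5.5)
Your argument is correct and follows the paper's proof step for step. You use the same $\phi$ and the same $\mathcal R_1$/$\mathcal R_2$ split, with \eqref{eq:basin-at-infinity-cond4} for $\mathcal R_1$, \eqref{eq:basin-at-infinity-cond2} with \eqref{eq:basin-at-infinity-cond4} for $\mathcal R_2$, and \eqref{eq:basin-at-infinity-cond3} for the domination; the exponents match because these conditions are built from exactly the bounds of \Cref{claim:basin-at-infinity-1} at $k=n$.

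The point you left open is immediate: in the $\mathcal R_1$ case $\hat\nu(1-t)=\hat\nu-\nu$ is simply a positive integer (nothing is divided), which is what the paper uses. Your cancellation of the common factor $\abs{z_n}^d$ in \eqref{eq:basin-at-infinity-14} is a harmless streamlining of the paper's detour through $\abs{z_0}^{d^{n+1}}$, which is why the factor $\bigl(\frac{1+C}{1-C}\bigr)^{d\frac{d^n-1}{d-1}}$ in \eqref{eq:basin-at-infinity-cond3} is not needed in your version.
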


    \begin{subproof}
        Fix any $w \in \partial D(0,\delta)$. 
        Using \Cref{claim:basin-at-infinity-1}, we estimate
        \begin{align*}
            \abs*{f(z_n) + \sum_{j = 0}^{d} r_j(w_n) z_n^j} &\leq C_f \abs{z_n}^d + \sum_{j = 0}^{d} \abs{r_j(w_n)} \abs{z_n}^j \\
            &\leq C_f \abs{z_n}^d + \abs{z_n}^d \\
            &\leq (C_f + 1) (1+C)^{d \frac{d^n - 1}{d - 1}} \abs{z_0}^{d^{n+1}}
        \end{align*}
        and 
        \begin{align*}
            \abs{z_n^d \hat{r}(z_n,w_n)} \geq \abs{\hat{r}(z_n,w_n)} (1-C)^{d \frac{d^n - 1}{d - 1}} \abs{z_0}^{d^{n+1}}.
        \end{align*}
        Hence, to prove \eqref{eq:basin-at-infinity-14}, it suffices to show that
        \begin{equation}\label{eq:basin-at-infinity-16}
            \abs{\hat{r}(z_n,w_n)} \geq \frac{C_f + 1}{\alpha} \paren*{ \frac{1+C}{1-C} }^{d \frac{d^n - 1}{d - 1}}.
        \end{equation}
        Furthermore, observe that
        \begin{align*}
            \hat{r}(z_n,w_n) &= \sum_{(\nu,\mu) \in \mathcal{R}} b_{\mu}(w_n) w_n^{\nu} z_n^{\mu} \\
            &= w_n^{\hat{\nu}} z_n^{\hat{\mu}} b_{\hat{\mu}}(w_n) \paren*{1 + \frac{1}{b_{\hat{\mu}}(w_n)}\sum_{(\nu,\mu) \in \mathcal{R}^{\star}} b_{\mu}(w_n) w_n^{\nu - \hat{\nu}} z_n^{\mu - \hat{\mu}}}
        \end{align*}
        where $\mathcal{R}^{\star} \coloneqq \mathcal{R} \setminus \set{(\hat{\nu}, \hat{\mu})}$, and define
        \begin{equation*}
            \phi(\zeta, \xi) \coloneqq \frac{1}{b_{\hat{\mu}}(\xi)}\sum_{(\nu,\mu) \in \mathcal{R}^{\star}} b_{\mu}(\xi) \xi^{\nu - \hat{\nu}} \zeta^{\mu - \hat{\mu}}
        \end{equation*}
        for $\zeta \in \CC\setminus\set{0}$ and $\xi \in D(0,\delta_b)\setminus\set{0}$.

        To complete the proof of the claim, it suffices to show that for all $(\nu,\mu) \in \mathcal{R}^{\star}$,
        \begin{equation}\label{eq:basin-at-infinity-17}
            \abs{w_n}^{\nu - \hat{\nu}} \abs{z_n}^{\mu - \hat{\mu}} \leq \frac{B_{-}}{2(\ell-d)B_{+}}
        \end{equation}
        and
        \begin{equation}\label{eq:basin-at-infinity-18}
            \abs{w_n}^{\hat{\nu}} \abs{z_n}^{\hat{\mu}} \geq \frac{2(C_f + 1)}{\alpha B_{-}} \paren*{ \frac{1+C}{1-C} }^{d \frac{d^n - 1}{d - 1}}.
        \end{equation}
        Indeed, inequality \eqref{eq:basin-at-infinity-17} then yields
        \begin{equation*}
            \abs{\phi(z_n,w_n)} \leq \frac{B_{+}}{B_{-}}\sum_{(\nu,\mu) \in \mathcal{R}^{\star}} \abs{w_n}^{\nu - \hat{\nu}} \abs{z_n}^{\mu - \hat{\mu}} \leq \frac{1}{2},
        \end{equation*}
        which is the claim about the factorization in \eqref{eq:basin-at-infinity-15}, and together with \eqref{eq:basin-at-infinity-18}, it also follows that
        \begin{align*}
            \abs{\hat{r}(z_n,w_n)} &\geq \abs{w_n}^{\hat{\nu}} \abs{z_n}^{\hat{\mu}} B_{-} \paren*{1 - \abs{\phi(z_n, w_n)}} \\
            &\geq \frac{B_{-}}{2} \abs{w_n}^{\hat{\nu}} \abs{z_n}^{\hat{\mu}} \\
            &\geq \frac{C_f + 1}{\alpha} \paren*{ \frac{1+C}{1-C} }^{d \frac{d^n - 1}{d - 1}},
        \end{align*}
        which is \eqref{eq:basin-at-infinity-16}, thus proving \eqref{eq:basin-at-infinity-14}.

        To prove that \eqref{eq:basin-at-infinity-17} holds, we consider two cases based on the partition of $\mathcal{R}$.
        First, consider $(\nu,\mu) \in \mathcal{R}_{1}$, that is, $\frac{\mu}{\nu} = \frac{\hat{\mu}}{\hat{\nu}}$ and $\hat{\mu} > \mu$.
        In particular, $\hat{\nu} > \nu$ and $\hat{q} = \frac{\hat{\mu}}{\hat{\nu}} = \frac{\hat{\mu} - \mu}{\hat{\nu} - \nu}$.
        Then,
        \begin{align*}
            \abs{w_n}^{\nu - \hat{\nu}} \abs{z_n}^{\mu - \hat{\mu}} &= \frac{1}{\abs{w_n}^{\hat{\nu} - \nu} \abs{z_n}^{\hat{\mu} - \mu}} = \paren*{\frac{1}{\abs{w_n} \abs{z_n}^{\hat{q}}}}^{\hat{\nu} - \nu}.
        \end{align*}
        Using \eqref{eq:basin-at-infinity-cond4}, we obtain
        \begin{align*}
            \abs{w_n} \abs{z_n}^{\hat{q}} &\geq \delta^{m^n} (1-C)^{\frac{d^n - 1}{d - 1} \hat{q}} \abs{z_0}^{d^n \hat{q}} \geq \frac{2 (\ell-d)B_{+}}{B_{-}},
        \end{align*}
        and thus,
        \begin{align*}
            \abs{w_n}^{\nu - \hat{\nu}} \abs{z_n}^{\mu - \hat{\mu}} &\leq \paren*{\frac{B_{-}}{2(\ell-d)B_{+}}}^{\hat{\nu} - \nu} \leq \frac{B_{-}}{2(\ell-d)B_{+}}.
        \end{align*}
        Next, let $(\nu,\mu) \in \mathcal{R}_{2}$, that is, $\frac{\mu}{\nu} < \frac{\hat{\mu}}{\hat{\nu}}$.
        Then,
        \begin{align*}
            \abs{w_n}^{\nu - \hat{\nu}} \abs{z_n}^{\mu - \hat{\mu}} = \frac{\paren*{\abs{w_n} \abs{z_n}^{\frac{\mu}{\nu}}}^{\nu}}{\paren*{\abs{w_n} \abs{z_n}^{\hat{q}}}^{\hat{\nu}}}
            \leq \frac{\paren*{\abs{w_n} \abs{z_n}^{q}}^{\nu}}{\paren*{\abs{w_n} \abs{z_n}^{\hat{q}}}^{\hat{\nu}}}
            \leq \frac{\paren*{\delta^{m^n} (1+C)^{q \frac{d^n - 1}{d - 1}} \abs{z_0}^{qd^n} }^{\nu}}{\paren*{\delta^{m^n}(1-C)^{\hat{q} \frac{d^n - 1}{d - 1}} \abs{z_0}^{\hat{q} d^n} }^{\hat{\nu}}}.
        \end{align*}
        Since \eqref{eq:basin-at-infinity-cond2} yields
        \begin{equation*}
            \paren*{\delta^{m^n} (1+C)^{q \frac{d^n - 1}{d - 1}} \abs{z_0}^{qd^n} }^{\nu} \leq \paren*{\frac{B_{-}}{2(\ell-d)B_{+}}}^{\nu} \leq \frac{B_{-}}{2(\ell-d)B_{+}},
        \end{equation*}
        and \eqref{eq:basin-at-infinity-cond4} yields
        \begin{align*}
            \paren*{\delta^{m^n}(1-C)^{\hat{q} \frac{d^n - 1}{d - 1}} \abs{z_0}^{\hat{q} d^n} }^{\hat{\nu}} \geq \paren*{\frac{2(\ell-d)B_{+}}{B_{-}}}^{\hat{\nu}}\geq 1,
        \end{align*}
        it follows that
        \begin{align*}
            \abs{w_n}^{\nu - \hat{\nu}} \abs{z_n}^{\mu - \hat{\mu}} \leq \frac{B_{-}}{2(\ell-d)B_{+}}.
        \end{align*}
        Next, we show that \eqref{eq:basin-at-infinity-18} holds.
        Note that by \Cref{claim:basin-at-infinity-1} and by \eqref{eq:basin-at-infinity-cond3},
        \begin{align*}
            \abs{w_n} \abs{z_n}^{\hat{q}} &\geq \delta^{m^n} (1-C)^{\frac{d^{n} - 1}{d - 1} \hat{q}}\abs{z_0}^{d^n \hat{q}}
            \geq \paren*{\frac{2(C_f + 1)}{\alpha B_{-}}  }^{\frac{1}{\hat{\nu}}} \paren*{\frac{1+C}{1-C}}^{d\frac{d^n-1}{\hat{\nu}(d-1)}}
        \end{align*}
        which implies that
        \begin{align*}
            \abs{w_n}^{\hat{\nu}} \abs{z_n}^{\hat{\mu}} &= \paren*{ \abs{w_n} \abs{z_n}^{\hat{q}}}^{\hat{\nu}} \geq \frac{2(C_f + 1)}{\alpha B_{-}}  \paren*{\frac{1+C}{1-C}}^{d\frac{d^n-1}{d-1}},
        \end{align*}
        which is \eqref{eq:basin-at-infinity-18}.
        This concludes the proof of \eqref{eq:basin-at-infinity-14} and \eqref{eq:basin-at-infinity-15}.
    \end{subproof}
    \textit{Proof of Lemma~{\ref{lemma:sublemma-classification}} (continued).}
    Since
    \begin{equation*}
        r(z_n,w_n) - z_n^d \hat{r}(z_n,w_n) = \sum_{j = 0}^{d} r_j(w_n) z_n^j,
    \end{equation*}
    this finishes the proof of \Cref{lemma:sublemma-classification} in the superattracting case.

    The proof for $m = 1$ follows the same structure without any conceptual departures. 
    We give a brief overview.
    In this case, $g(w) = \lambda w$ for some $\lambda \in \DD\setminus\set{0}$. 
    Thus, $w_n = \lambda^n w$, and, in particular, $w_k = \lambda^{k-n} w_n = \lambda^{k} w$ for any $0 \leq k \leq n$.
    Consequently, the main adjustment is that we use $\abs{\lambda}^n \delta$ instead of $\delta^{m^n}$.
    Note that now $\sigma = \max\set*{q, \frac{1}{d}\hat{q}}$.
    More precisely, \eqref{eq:basin-at-infinity-C1-a} is replaced by
    \begin{equation}\label{eq:basin-at-infinity-C1-a-attr}
        C_1^{d^n} \leq \abs{\lambda}^n \frac{C}{2B_{+}(\ell-d)}\, (1+C)^{-\frac{d^{n-1} - 1}{d - 1} \hat{q}}.
    \end{equation}
    Such a constant $0 < C_1 < 1$ exists since $\abs{\lambda}^{\frac{d^n-1}{d-1}} \leq \abs{\lambda}^n \leq 1$.
    Moreover, \eqref{eq:basin-at-infinity-12} is replaced by
    \begin{equation}\label{eq:basin-at-infinity-12-attr}
        \frac{\paren*{\frac{C_1}{\abs{z}^{\sigma}}}^{d^n}}{\abs{\lambda}^n\frac{\e'}{2}} = \paren*{ \frac{C_1}{\abs{z}^{\sigma}} \paren*{\frac{2}{\e'\abs{\lambda}^n}}^{\frac{1}{d^n}} }^{d^n} \to 0
    \end{equation}
    as $n \to \infty$, which holds for all $z \in D(0,R_2)^c$ since $\frac{C_1}{\abs{z}^{\sigma}} < 1$ and $\paren*{\abs{\lambda}^{-n}}^{\frac{1}{d^n}} \to 1$.
    Accordingly, \eqref{eq:basin-at-infinity-N2} becomes $\frac{C_1^{d^n}}{\abs{z_0}^{\sigma d^n}} < \abs{\lambda}^n \frac{\e'}{2}$, and we obtain $0 < \delta < \frac{\e'}{2}$ such that
    \begin{equation*}
        \frac{C_2^{d^n}}{\abs{z_0}^{\hat{q}d^n}} < \abs{\lambda}^n \delta < \frac{C_1^{d^n}}{\abs{z_0}^{\sigma d^n}}.
    \end{equation*}
    Since $\sigma d^n \geq \hat{q} d^{n-1}$ and $\abs{z_0} \geq 1$, \eqref{eq:basin-at-infinity-C1-a-attr} yields that \eqref{eq:basin-at-infinity-cond1} changes to
    \begin{equation}\label{eq:basin-at-infinity-19}
        \abs{\lambda}^n \delta (1+C)^{\frac{d^{n-1} - 1}{d - 1} \hat{q}} \abs{z_0}^{d^{n-1} \hat{q}} \leq \abs{\lambda}^n\frac{C}{2B_{+}(\ell-d)}.
    \end{equation}
    The analogues of conditions~\eqref{eq:basin-at-infinity-cond2}--\eqref{eq:basin-at-infinity-cond4}, restated with $\abs{\lambda}^n \delta$ in place of $\delta^{m^n}$, follow from \eqref{eq:basin-at-infinity-C1-b}--\eqref{eq:basin-at-infinity-C2-b} as before.

    For \Cref{claim:basin-at-infinity-1}, since $\abs{w_k} = \abs{\lambda}^{k} \abs{w} \leq \delta$, the key estimate $\abs{w_k}^{\nu} \abs{z_k}^{\mu} \leq \frac{C}{2B_{+}(\ell-d)}$ follows from
    \begin{equation*}
        \abs{w_k}^{\nu} \abs{z_k}^{\mu} \leq \paren*{\delta (1+C)^{\frac{d^k - 1}{d - 1}\frac{\mu}{\nu}} \abs{z_0}^{d^k\frac{\mu}{\nu}}}^{\nu}.
    \end{equation*}    
    For $k \leq n-1$ and $\frac{\mu}{\nu} \leq \hat{q}$, the adapted condition~\eqref{eq:basin-at-infinity-19} yields 
    \begin{equation*}
        \paren*{\delta (1+C)^{\frac{d^k - 1}{d - 1}\frac{\mu}{\nu}} \abs{z_0}^{d^k\frac{\mu}{\nu}}}^{\nu} \leq \paren*{\frac{C}{2B_{+}(\ell-d)}}^{\nu} \leq \frac{C}{2B_{+}(\ell-d)},
    \end{equation*}
    and combining both gives the bound. 
    The rest of the inductive step is unchanged.

    The proof of \Cref{claim:basin-at-infinity-2} carries over verbatim, using $\abs{w_n} = \abs{\lambda}^n \delta$ for $w \in \partial D(0,\delta)$.
\end{proof}

\begin{lemma}
    Under the hypotheses of \Cref{thm:basin-inf-classification}, non-bulging for the basin at infinity implies nowhere bulging.
\end{lemma}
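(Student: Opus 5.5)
The plan is to show first that a whole neighborhood of infinity in the fiber lies in the Julia set, and then to pull this back to all of the basin at infinity $U$ of $f$. Assume the basin at infinity is non-bulging. By \Cref{lemma:basin-inf-classification-oneway} this forces $m < d < \ell$, so \Cref{lemma:sublemma-classification} and the argument in the proof of \Cref{lemma:basin-at-inf-2} are available.

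\textbf{Step 1: the exterior region $D(0,R)^c \times \set{0}$ lies in $\J(F)$.} Let $R$ be as in \Cref{lemma:sublemma-classification}. That proof gives more than stated: for fixed $z_0$ and $\e$, the integer $n$ can be any integer $n \geq N = N(z_0,\e,M)$. Hence for every $\e>0$ and every sufficiently large $n$ there is $\widetilde w \in D(0,\e)$ with $\pi_z F^{n}(z_0,\widetilde w) = 0$. Suppose, for contradiction, that $(z_0,0) \in \F(F)$. Then there is a connected neighborhood $W$ on which $(F^n)$ is normal. Shrinking $W$, we may take it to be a polydisk $D(z_0,\eta)\times D(0,\eta)$ and work on a compact polydisk $K \subset W$ containing $\set{z_0}\times \overline{D(0,\eta/2)}$. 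Since $f^n(z_0) \to \infty$ and every limit function on $W$ is either holomorphic or identically $\infty$, every convergent subsequence of $(F^n)$ diverges to $\infty$ locally uniformly on $W$. Therefore the full sequence diverges uniformly to $\infty$ on $K$. This is contradicted by the points $(z_0,\widetilde w)\in K$ with $\e=\eta/2$: their $n$th iterates have $z$-coordinate $0$ and $w$-coordinate bounded by $\eta/2$, for infinitely many $n$.

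\textbf{Step 2: pulling back to all of $U$.} Let $z \in U$. Choose $k$ with $|f^k(z)| > R$, and put $z' \coloneqq f^k(z)$. Suppose $(z,0)$ had a normal neighborhood $W$, and fix a compact neighborhood $K \subset W$ of $(z,0)$. I will use that $F^k$ is an open map. It is holomorphic between equidimensional spaces, so openness follows once its fibers are discrete, and this holds because $F^k(\zeta,\xi) = (p(\zeta,\xi), g^k(\xi))$ with $g^k$ nonconstant and $p(\cdot,\xi)$ nonconstant for $\xi$ near $0$ (as $p(\cdot,0)=f^k$). Hence $F^k(\interior{K})$ contains a polydisk $D(z',\eta)\times D(0,\eta)$.

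Apply Step 1 at $z'$, with $\e$ small enough that the relevant points lie in this polydisk. For every large $n$ this gives $Q_n \in F^k(\interior K)$ with $\pi_z F^n(Q_n) = 0$ and $|\pi_w F^n(Q_n)|$ bounded. Choose $P_n \in K$ with $F^k(P_n) = Q_n$. Then $F^{n+k}(P_n)$ stays bounded, even though $F^{j}(z,0) \to \infty$. As in Step 1, normality on $W$ would force uniform divergence on $K$, which is a contradiction. Thus $U\times\set{0}\subset \J(F)$, so the basin is nowhere bulging.

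\textbf{Main obstacle.} The delicate point is in Step 1: making sure that the zeros produced by the proof of \Cref{lemma:basin-at-inf-2} occur for \emph{all} large times $n$, or at least infinitely many, \emph{inside one fixed compact set}. This is needed for subsequence-normality to be contradicted. I would address it by rereading the choice of $\delta$ in \Cref{lemma:sublemma-classification}, where $\delta<\e'/2$ is chosen after an arbitrary $n\ge N$. The openness of $F^k$ in Step 2 should be routine by comparison.
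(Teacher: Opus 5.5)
Your proof is correct and follows the paper's argument. You assume $(\zeta,0)\in\F(F)$, deduce uniform divergence on a compact neighborhood, push forward by an iterate into $D(0,R)^c\times\{0\}$, and pull back the zeros $\pi_z F^n(z_0,\widetilde w)=0$ from the proof of \Cref{lemma:basin-at-inf-2} to get a contradiction. The differences are cosmetic. The paper gets $\{z_0\}\times D(0,\e)$ inside the image from continuity of zeros of $\pi_z F^N(\cdot,w)-z_0$ together with openness of $g^N$, which is lighter than your appeal to openness of maps with discrete fibers. Your Step~1 is just the case $k=0$ of Step~2. Your ``main obstacle'' is not one: the quantifier over $M$ in \eqref{eq:basin-at-infinity-7} already yields infinitely many times $n$ for a fixed $\e$.
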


\begin{proof}
    Fix any $\zeta$ in the basin at infinity of $f$.
    It suffices to show that $(\zeta,0)$ belongs to the Julia set of $F$.
    Suppose, to the contrary, that $(\zeta, 0)$ is in the Fatou set of $F$.
    Since both $\F(F)$ and the basin at infinity of $f$ are open, there exist $\delta_1 > 0$ and $\delta_2 > 0$ such that, with $D_1 \coloneqq D(\zeta,\delta_1)$ and $D_2 \coloneqq D(0,\delta_2)$, the closure $\overline{D_1}$ lies in the basin at infinity of $f$ and $\overline{D_1 \times D_2} \subset \F(F)$.
    Since $F^n(\zeta,0) = (f^n(\zeta),0) \to \infty $, the only possible limit behavior is that the iterates diverge uniformly to infinity in the $z$-coordinate on $D_1 \times D_2$.

    By \Cref{lemma:basin-inf-classification-oneway}, non-bulging implies $m < d < \ell$.
    Hence, by the proof of \Cref{lemma:basin-at-inf-2}, there exists $R > 0$ such that $D(0,R)^c$ lies in the basin at infinity of $f$, and such that for every $ z \in D(0,R)^c$, $\e > 0$, and $M \in \NN$, there exist $\widetilde{w} \in D(0,\e)$ and $n \geq M$ satisfying \eqref{eq:basin-at-infinity-7}, that is,
    \begin{equation}\label{eq:basin-at-infinity-7-restate}
        \pi_z F^{n}(z,\widetilde{w}) = 0.
    \end{equation}
    Since $\overline{D_1}$ is a compact subset of the basin at infinity of $f$, there exists $N \in \NN$ with $f^N(D_1) \subset D(0,R)^c$.
    Define $z_0 \coloneqq f^N(\zeta)$ and
    \begin{equation*}
        G(z,w) \coloneqq (\phi(z,w), \psi(w)) \coloneqq F^N(z,w).
    \end{equation*}
    Since $G$ is a skew product,
    \begin{equation*}
        G(D_1 \times D_2) = \bigcup_{w \in D_2} \phi(D_1 \times \set{w}) \times \set{\psi(w)}.
    \end{equation*}
    The function $\phi(\cdot,0) - z_0 = f^N(\cdot) - f^N(\zeta)$ is not identically zero and vanishes at $\zeta \in D_1$.
    By the continuity of zeros (see, e.g., \cite[Lemma~3.2]{Range1986}), after shrinking $\delta_2$
    if necessary,
    \begin{equation*}
        z_0 \in \phi(D_1 \times \set{w}) 
    \end{equation*}
    for every $w \in D_2 = D(0,\delta_2)$.
    In particular,
    \begin{equation*}
        \set{z_0} \times \psi(D_2) \subset G(D_1 \times D_2).
    \end{equation*}
    Fix any $\e > 0$ such that $D(0,\e) \subset \psi(D_2)$ and any $M \in \NN$.
    Choose $\widetilde{w} \in D(0,\e)$ and $n \geq M$ according to \eqref{eq:basin-at-infinity-7-restate}.
    Then there exists $(\hat{z}_0,\hat{w}) \in D_1 \times D_2$ with $F^N(\hat{z}_0, \hat{w}) = (z_0, \widetilde{w})$.
    Hence, by \eqref{eq:basin-at-infinity-7-restate}, 
    \begin{equation*}
        \pi_z F^{N+n}(\hat{z}_0, \hat{w}) = \pi_z F^n(z_0, \widetilde{w}) = 0.
    \end{equation*}
    Since $M$ was arbitrary, the iterates cannot diverge uniformly to infinity on $D_1 \times D_2$, contradicting the uniform divergence established above.
\end{proof}

\section{Nowhere Bulging}\label{sec:nowhere-bulging}
We now prove \Cref{thm:nowhere-bulging}, which strengthens \cite[Theorem 1.1]{Potthink2026} from non-bulging to nowhere bulging as defined in \Cref{def:bulging}.

\nowhere*

The proof of \Cref{thm:nowhere-bulging} is a generalization of the proof of \cite[Theorem 1.1]{Potthink2026} obtained by carrying out the same construction simultaneously for countably many points.

\begin{proof}[Proof of Theorem~{\ref{thm:nowhere-bulging}}]
    Let $Q$ be a countable and dense subset of $U$ such that $f^n(q) \neq f^m(p)$ for all $n,m \in \NN$ and for all $p \neq q$ in $Q$.
    This is possible since for any fixed $q \in U$, the set $\bigcup_{n,m \in\NN} f^{-n}(f^m(q))$ is countable.
    Hence, one can build $Q$ inductively by choosing each next point outside the countable bad set determined by the previous ones and arbitrarily close to a prescribed point of a countable dense subset of $U$.
    Furthermore, $f^n(q) \neq f^m(q)$ for all $n \neq m$ and for all $q \in Q$ since orbits of points in $U$ accumulate at infinity and therefore cannot be preperiodic.
    Let $(q_m)_{m\in\NN}$ be an enumeration of $Q$.
    
    Given any entire function $h$, we consider the skew product
    \begin{equation*}
        F_h (z,w) = (f(z) + wh(z), g(w)).
    \end{equation*}
    Observe that $ g(0) = 0 $ implies that $\set{w=0}$ is an invariant fiber of $F_h$ independent of our choice of $h$.
    The following lemma provides an entire function $h$ such that arbitrarily close to any point $(q,0)  \in Q \times \set{0}$, we find points which are mapped close to the origin after arbitrarily large numbers of iterations and points which are mapped close to infinity after arbitrarily large numbers of iterations.

    \begin{lemma}\label{lemma:nowhere-bulging-1}
        There exist an entire function $h $, a subsequence of iterates $(f^{n_k})_{k\in\NN}$, and two sequences $(w_k)_{k\in\NN}$ and $(w'_k)_{k\in\NN}$ of nonzero complex numbers converging to $0$ such that the skew product $F_h$ satisfies for all $k \geq 1$ and all $m \in \set{0,\dots, k}$,
        \begin{equation}\label{eq:nowhere-bulging-1}
            \norm{F_h^{n_k+1}(q_m, w_k) } < 1,
        \end{equation}
        and
        \begin{equation}\label{eq:nowhere-bulging-2}
            \norm{F_h^{n_k+1}(q_m, w'_k) } > k.
        \end{equation}
    \end{lemma}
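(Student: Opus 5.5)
We construct $h$ as the locally uniform limit of entire functions $h_k$, built inductively by Runge approximation (\Cref{thm:runge-1}). The idea is to place, at step $k$, a small compact disk $E_k$ around each of the points $f^{n_k}(q_0),\dots,f^{n_k}(q_k)$. On these disks we prescribe $h$ so that the single extra term $w\,h(z)$ at time $n_k$ moves the orbit either to the origin or far out. Since $U$ is orbitally unbounded, we can pick $n_k$ with $\abs{f^{n_k}(q_m)}$ as large as we like for all $m\le k$. Indeed, all points of $U$ share a common subsequence along which $f^{n}\to\infty$, by the remark after \Cref{def:orbitally-unbounded}.

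\textbf{Inductive set-up.} We carry along:
\begin{itemize}
\item pairwise disjoint closed disks $K_0,K_1,\dots$ with $\min\abs{K_j}\to\infty$, each with connected complement;
\item entire functions $h_k$;
\item times $n_k$ and nonzero parameters $w_k,w_k'$ with $\abs{w_k},\abs{w'_k}<1/k$.
\end{itemize}
The disk $K_k$ is a union of tiny disjoint disks $D(p_{k,m},\rho_k)$ centred at $p_{k,m}=f^{n_k}(q_m)$, $m\le k$.

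The choice of $Q$ matters here: orbits of distinct points of $Q$ never meet and no point of $Q$ is preperiodic. Hence the points $p_{k,m}$ are pairwise distinct. They are also distinct from all earlier orbit points $f^{j}(q_{m'})$ with $j<n_k$ that were used at previous steps. Taking $n_k$ large keeps $K_k$ away from $\overline{D(0,\rho)}$ and from the previous compacta.

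\textbf{Prescribing $h_k$ on the new disks.} Let $\widetilde w$ be a candidate value of $w$. For $w=\widetilde w$ small, the point $(q_m,w)$ follows a perturbed orbit $z^{(m)}_j(w)$ with $z_0^{(m)}=q_m$ and $z^{(m)}_{j+1}=f(z^{(m)}_j)+g^j(w)h_{k-1}(z^{(m)}_j)$. As $w\to0$ this orbit is close to $f^j(q_m)$ for $j\le n_k$. On $D(p_{k,m},\rho_k)$ we define the target
\[
\psi_k(z)= -\frac{f(z)}{g^{n_k}(w_k)}+\frac{c_k}{g^{n_k}(w_k')}\qquad\text{(a constant-type target)}.
\]
More concretely, we first fix $w_k\ne w_k'$ small with $g^{n_k}(w_k),g^{n_k}(w'_k)\ne0$; this is possible since $g$ is nonconstant. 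We then choose $\psi_k$ on $D(p_{k,m},\rho_k)$, for instance a function of the form $\alpha+\beta(z-p_{k,m})$, so that
\[
f(z^{(m)}_{n_k}(w_k))+g^{n_k}(w_k)\psi_k(z^{(m)}_{n_k}(w_k))\approx 0
\]
while the same expression at $w_k'$ has modulus $>2k$.

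The hard part is that $z^{(m)}_{n_k}(w_k)$ and $z^{(m)}_{n_k}(w'_k)$ are two nearby but distinct points of the same small disk, and the two required values differ hugely. Since $\abs{g^{n_k}(w'_k)}$ can be made tiny relative to $\abs{g^{n_k}(w_k)}$, it is simplest to pick $w'_k$ with $g^{n_k}(w_k')$ such that a constant $\alpha=-f(p_{k,m})/g^{n_k}(w_k)$ works at $w_k$. At $w'_k$ the resulting value $f(z)\bigl(1-g^{n_k}(w'_k)/g^{n_k}(w_k)\bigr)\approx f(p_{k,m})$ is then large, because $\abs{p_{k,m}}$ is large and $f$ is large there. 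This uses $f(p_{k,m})=f^{n_k+1}(q_m)$ large, which we arrange along the escaping subsequence. We also need the $w$-coordinate $g^{n_k+1}(w_k)$ to be $<1/2$, which holds by taking $w_k$ small. If $\{w=0\}$ is not attracting, we first shrink so that the finitely many iterates stay small.

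\textbf{Approximation and passage to the limit.} Next we apply \Cref{thm:runge-1} to the sets $K_0,\dots,K_k$. We prescribe $h_{k-1}$ on $K_0\cup\dots\cup K_{k-1}$ and $\psi_k$ on $K_k$, with errors $\e_k$ summable and small enough that every earlier finitely many inequalities \eqref{eq:nowhere-bulging-1} and \eqref{eq:nowhere-bulging-2} persist. These are strict inequalities involving only finitely many points, so they are stable. We also add a compact disk $\overline{D(0,k)}$ to make $h_k$ converge locally uniformly on $\CC$. This needs $K_k$ to lie outside $\overline{D(0,k)}$, which holds for $n_k$ large.

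By \Cref{thm:weierstrass}, $h=\lim h_k$ is entire. The inequalities survive because each required strict inequality is preserved when the later errors are small enough. Those later errors are chosen after the corresponding $w_k,w'_k$, and the perturbed orbits up to time $n_k+1$ visit only the compacta already fixed.

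\textbf{Main obstacle.} The main difficulty is the bookkeeping. Each perturbed orbit up to time $n_k$ must stay inside regions where $h$ is already controlled, so that later changes do not disturb it. This is exactly why $Q$ was chosen with disjoint, non-preperiodic orbits, and why we take $\rho_k$ tiny.
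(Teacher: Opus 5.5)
There is a genuine gap in your bookkeeping. As written, later approximants are forced to stay close to $h_k$ only on $\overline{D(0,k)}$ and $K_0\cup\dots\cup K_k$, and each $K_i$ only surrounds the time-$n_i$ points $f^{n_i}(q_m)$.

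The perturbed orbits of $(q_m,w_k)$ and $(q_m,w'_k)$, however, also pass near $f^j(q_m)$ at every time $j<n_k$ with $j\notin\set{n_0,\dots,n_{k-1}}$. Nothing places these points in a controlled set. For an escaping component, $\abs{f^j(q_m)}$ is far larger than $k$ for most such $j$, and this persists at steps $k+1,k+2,\dots$ until some $\overline{D(0,k')}$ happens to contain them.

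Runge's theorem gives no information off the compact set. So $h_{k+1},h_{k+2},\dots$ may differ arbitrarily from $h_k$ at these points. That can push $\pi_zF_h^{n_k}(q_m,w_k)$ out of $D(p_{k,m},\rho_k)$ and destroy \eqref{eq:nowhere-bulging-1} and \eqref{eq:nowhere-bulging-2} for the limit $h$.

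Your claim that the orbits up to time $n_k+1$ ``visit only the compacta already fixed'' is therefore false for the compacta you actually use. Stability of finitely many strict inequalities helps only if all later approximants are uniformly close to $h_k$ on a neighborhood of the whole orbit segment up to time $n_k-1$. You name this as the main obstacle, but the choice of $Q$ and small $\rho_k$ do not by themselves resolve it.

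The missing step is to put these orbit segments into the approximation sets, as the paper does with $H_k=\overline{D(0,R_k)}\cup\bigcup_{m\le k,\,j<n_k}\overline{D(f^j(q_m),\rho_k)}$. Here $\overline{D(0,R_k)}$ contains everything from step $k-1$, including its time-$n_{k-1}$ disks. The time $n_k$ is chosen with all $f^{n_k}(q_m)$ outside $\overline{D(0,R_k)}$. The radius $\rho_k$ makes the disks around $f^j(q_m)$, $j\le n_k$, $m\le k$, pairwise disjoint, so the time-$n_k$ disks avoid $H_k$ and Runge applies.

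One then chooses $w_k,w'_k$ so that the $F_{h_{k-1}}$-orbits stay in the interiors of these disks up to time $n_k$. By continuity one extracts $\delta_k>0$ such that $\norm{\widetilde h-h_{k-1}}_{H_k}<\delta_k$ keeps the time-$n_k$ points in the target disks. Finally one imposes $\norm{h_j-h_{j-1}}_{H_j}<2^{-j-1}\min\set{\delta_0,\dots,\delta_j}$ at every later step.

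With this in place, your handling of $w'_k$ is a valid simplification of the paper's argument. The paper separates $\zeta_{k,m}$ and $\zeta'_{k,m}$ into disjoint disks $A^k_m,B^k_m$ with two different targets, which forces it to perturb $h_k$ so that $w\mapsto\pi_zF^{n_k}_{h_k}(q_m,w)$ is nonconstant. You instead use the single constant $-f(p_{k,m})/g^{n_k}(w_k)$ on $D(p_{k,m},\rho_k)$ and take $\abs{g^{n_k}(w'_k)}<\tfrac12\abs{g^{n_k}(w_k)}$. This gives $\abs{\pi_zF_h^{n_k+1}(q_m,w'_k)}\ge\tfrac12\abs{f^{n_k+1}(q_m)}-\tfrac12$, provided two things hold:
\begin{enumerate}
\item $\rho_k$ is small enough that $f$ varies by less than $\tfrac14$ on each disk;
\item $n_k+1$, rather than $n_k$, is taken along the escaping subsequence, with $\abs{f^{n_k+1}(q_m)}>2k+2$.
\end{enumerate}
Then $f^{n_k}(q_m)\to\infty$ as well, since $f$ maps bounded sets to bounded sets.
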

    \textit{Proof of Theorem~{\ref{thm:nowhere-bulging}} (continued).}
    We now prove that $U \times \set{0}$ is contained in the Julia set $\J(F)$ of $F \coloneqq F_h$, where $h$ is as provided by \Cref{lemma:nowhere-bulging-1}.
    
    Assume, to the contrary, that there exists $z_0 \in U$ for which $(z_0, 0)$ belongs to the Fatou set $ \F(F)$ of $F$. 
    Then, the iterates of $F$ form a normal family in a neighborhood of $(z_0, 0)$. 
    In particular, one can find a compact neighborhood $C$ of $(z_0,0)$ and a subsequence $(F^{n_{k(j)} + 1})_{j\in\NN}$ of iterates such that this subsequence either diverges uniformly to $\infty$ on $C$ or converges uniformly to a finite limit map on $C$.

    First, assume that the subsequence diverges uniformly to infinity. 
    In particular, there exists $J \in\NN$ such that for all $j \geq J$ and $(z,w) \in C$,
    \begin{equation*}
        \norm{F^{n_{k(j)} + 1}(z,w)} \geq 1.
    \end{equation*}
    Since $Q$ is dense in $U$ and $C$ is a neighborhood of $(z_0, 0)$, there are $\kappa \in \NN$ and $0 < \rho < 1$ such that $\set{q_\kappa} \times D(0,\rho) \subset C$.
    Let $J' \in \NN$ be large enough that for all $j \geq J'$, we have $\abs{w_{k(j)}} < \rho$, and therefore, $(q_\kappa, w_{k(j)}) \in C$.
    Lastly, let $J''\in \NN$ be such that $k(j) \geq \kappa$ for all $j \geq J''$.
    Applying inequality \eqref{eq:nowhere-bulging-1}, we obtain for all $j \geq \max\set{J, J', J''} $,
    \begin{equation*}
        1 \leq \norm{F^{n_{k(j)} + 1}(q_\kappa, w_{k(j)})} < 1,
    \end{equation*}
    which is a contradiction.

    Now, assume that the subsequence of iterates has a finite limit map $\Psi\colon C \to \CC^2$, i.e., $F^{n_{k(j)} + 1}$ converges to $\Psi$ uniformly on $C$ as $j \to \infty$. Then there exists $J \in \NN$ with
    \begin{equation}\label{eq:nowhere-bulging-3}
        \norm{F^{n_{k(j)} + 1}(z,w) - \Psi(z,w)} \leq 1
    \end{equation}
    for all $j \geq J$ and $(z,w) \in C$. As before, we find $\kappa$, $J'$, and $ J'' \in \NN$ with $(q_\kappa, w_{k(j)}') \in C$ for all $j \geq J'$, and with $k(j) \geq \kappa$ for all $j \geq J''$.
    Let $A \coloneqq \max_{(z,w) \in C} \norm{\Psi(z,w)} < \infty$ and choose $J'''\in\NN$ such that $k(j) > A + 1 $ for all $j \geq J'''$.
    Applying \eqref{eq:nowhere-bulging-2} and the reverse triangle inequality, we obtain
    \begin{align*}
        \lVert F^{n_{k(j)} + 1}(q_\kappa,w_{k(j)}') - \Psi(q_\kappa,w_{k(j)}') \rVert &\geq \left\lvert \lVert F^{n_{k(j)} + 1}(q_\kappa,w_{k(j)}') \rVert - \lVert \Psi(q_\kappa,w_{k(j)}') \rVert \right\rvert \\
        &\geq k(j) - A \\
        &> A + 1 - A = 1
    \end{align*}
    for all $j \geq \max\set{J,J',J'', J'''}$, in contradiction to \eqref{eq:nowhere-bulging-3}. 
    Therefore, $(z_0,0)$ does not belong to the Fatou set of $F$, which completes the proof.
\end{proof}

We now prove \Cref{lemma:nowhere-bulging-1}.

\begin{proof}[Proof of Lemma~{\ref{lemma:nowhere-bulging-1}}]
    By the hypotheses, there exist $z_0 \in U$ and a subsequence of iterates $(f^{n_k})_{k\in\NN}$ with $(n_k)_{k\in\NN}$ strictly increasing such that $f^{n_k}(z_0) \to \infty$ as $k \to \infty$.
    By normality on $U$, up to considering a subsequence, it follows that
    \begin{equation}\label{eq:nowhere-bulging-4}
        \abs{f^{n_k}(q_m)} \to \infty
    \end{equation}
    as $k \to \infty$ for all $m \in\NN$.

    The function $h$ will be defined as the limit of a sequence of entire functions which are inductively constructed using Runge's Theorem~\ref{thm:runge-1}. 
    To that end, we require several sequences of sets which are described in the following claim and shown in Figure~\ref{fig:nowhere-bulging-sets}.

    \begin{claim}\label{claim:nowhere-bulging}
        Up to passing to a subsequence of $(f^{n_k})_{k\in\NN}$, there exist a sequence of nonnegative numbers $(R_k)_{k\in\NN}$ and a sequence of positive numbers $(\rho_k)_{k\in\NN}$ such that letting $\widetilde{H}_k \coloneqq \overline{D(0,R_k)}$ and $\Delta^j_{m,k} \coloneqq \overline{D(f^j(q_m), \rho_k)}$, the following hold for every $k\in\NN$:
        \begin{enumerate}[label=(\arabic*)]
            \item $R_k \geq k$,
            \item the sets $\Delta^j_{m,k}$, $m = 0,\dots,k$, $j = 0, \dots, n_k$, are pairwise disjoint,
            \item we have the inclusion
            \begin{equation*}
                \widetilde{H}_k \cup \bigcup_{m = 0}^k \bigcup_{j = 0}^{n_k} \Delta^j_{m,k} \subset \widetilde{H}_{k+1},
            \end{equation*}
            \item $\widetilde{H}_k$ and $ \Delta^{n_k}_{0,k} \cup \dots \cup \Delta^{n_k}_{k,k}$ are disjoint,
            \item for all $z, z' \in \Delta^j_{m,k}$, $m = 0,\dots,k$, and $j = 0, \dots, n_k$,
            \begin{equation}\label{eq:nowhere-bulging-5}
                \abs{f(z) - f(z')} < \frac{1}{4}.
            \end{equation}
        \end{enumerate}
    \end{claim}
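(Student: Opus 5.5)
We will construct the sequences $(R_k)$, $(\rho_k)$ and the subsequence of $(n_k)$ simultaneously by induction on $k$. Two facts fixed before the claim drive the construction. First, the points $f^j(q_m)$ for $m\in\NN$, $j\in\NN$ are pairwise distinct: orbits of distinct points of $Q$ never meet, and no $q\in Q$ is preperiodic. Second, by \eqref{eq:nowhere-bulging-4}, $\abs{f^{n_k}(q_m)}\to\infty$ as $k\to\infty$ for every fixed $m$. The first fact lets us choose the discs $\Delta^j_{m,k}$ small and disjoint. The second lets us push the last discs $\Delta^{n_k}_{m,k}$ outside a prescribed large disc.

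Start with $k=0$. Put $R_0\coloneqq 0$. Since $\abs{f^{n_k}(q_0)}\to\infty$, after discarding finitely many terms of the subsequence we may assume $f^{n_0}(q_0)\neq 0$. The finitely many points $f^j(q_0)$, $j=0,\dots,n_0$, are distinct. Hence we can pick $\rho_0>0$ so small that:
\begin{enumerate}
    \item the closed discs of radius $\rho_0$ around these points are pairwise disjoint (property (2));
    \item $\Delta^{n_0}_{0,0}$ avoids $\widetilde H_0=\{0\}$ (property (4));
    \item $f$ oscillates by less than $1/4$ on each disc (property (5)), which follows from uniform continuity of $f$ on a compact neighborhood.
\end{enumerate}
Then choose $R_1\geq 1$ so large that $\widetilde H_1$ contains $\widetilde H_0$ and all these discs; this is property (3) at level $0$ and (1) at level $1$.

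The inductive step runs the same way. Suppose $R_0,\dots,R_k$, $\rho_0,\dots,\rho_{k-1}$ and $n_0<\dots<n_{k-1}$ are fixed, with $R_k\geq k$ and $\widetilde H_k$ already containing all earlier discs. We must now choose $n_k$ and $\rho_k$.
\begin{enumerate}
    \item Using \eqref{eq:nowhere-bulging-4} for the finitely many indices $m=0,\dots,k$, pass to a further subsequence, keeping the terms already chosen, so that $n_k>n_{k-1}$ and $\abs{f^{n_k}(q_m)}>R_k+1$ for all $m\leq k$.
    \item Choose $\rho_k<1$ small enough that the finitely many discs $\Delta^j_{m,k}$ ($m\leq k$, $j\leq n_k$) are pairwise disjoint, by distinctness of the centres, and that \eqref{eq:nowhere-bulging-5} holds on each, by uniform continuity of $f$.
    \item Because $\rho_k<1$ and the centres $f^{n_k}(q_m)$ lie outside $\overline{D(0,R_k+1)}$, the discs $\Delta^{n_k}_{m,k}$ are disjoint from $\widetilde H_k$, which is property (4).
    \item Finally set $R_{k+1}\coloneqq\max\set{k+1,\,R_k+1,\,\max_{m,j}(\abs{f^j(q_m)}+\rho_k)}$, which gives properties (3) and (1).
\end{enumerate}
Properties (1)–(5) at step $k$ only involve data chosen at steps $\leq k$, and passing to later subsequences leaves the chosen $n_0,\dots,n_k$ unchanged. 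A diagonal argument therefore yields one subsequence for which all properties hold for every $k$.

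There is no real obstacle here. The only point needing care is the bookkeeping of the successive subsequence extractions. We must ensure that the chosen $n_k$ are frozen and that \eqref{eq:nowhere-bulging-4} still holds along the final subsequence. It does, since it holds along any subsequence. We must also ensure that property (4) is met before $R_{k+1}$ is enlarged, which it is, because $R_{k+1}$ is chosen only in the last step.
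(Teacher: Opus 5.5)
Your proof is correct and follows essentially the same inductive construction as the paper. At each step you enlarge $\widetilde H_{k+1}$ to contain all level-$k$ data, pick the next term of the subsequence so that the points $f^{n_{k+1}}(q_m)$ lie outside it, and then shrink $\rho_{k+1}$ to get disjointness, property (4), and \eqref{eq:nowhere-bulging-5}; the only differences are cosmetic: the paper redefines $n_0\coloneqq 0$ and translates so that $q_0\neq 0$, whereas you discard finitely many terms to get $f^{n_0}(q_0)\neq 0$ (arguably cleaner), and no diagonal argument is actually needed, since the $n_k$ are simply chosen increasingly from the original sequence.
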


    \begin{figure}[ht]
        \centering
        \begin{tikzpicture}[
            scale=1,
            orbit/.style={->, >=stealth, dashed, shorten >=6pt, shorten <=6pt},
        ]
            \def\rad{0.18}

            \draw[dashed] (0,0) circle (4.2);
            \node[left] at (80:3.8) {$\widetilde{H}_{2}$};

            \filldraw[fill=gray!25] (0,0) circle (2);
            \node at (80:1.55) {$\widetilde{H}_1$};
            \fill (0,0) circle (1pt) node[below right] {$0$};

            \coordinate (a0) at (0.6,0.4);
            \coordinate (a1) at (1.3,-1.0);
            \coordinate (a2) at (2.6,0.9);
            \coordinate (a3) at (3.4,-1.6);

            \coordinate (b0) at (-0.9,0.8);
            \coordinate (b1) at (-1.4,-0.5);
            \coordinate (b2) at (-0.6,-1.4);
            \coordinate (b3) at (-3.0,-2.2);

            \foreach \p in {a0,a1,a2,b0,b1,b2}
                \draw[draw, fill=gray!25] (\p) circle (\rad);

            \foreach \p in {a3,b3}
                \draw[draw, fill=white] (\p) circle (\rad);

            \foreach \p in {a0,a1,a2,a3,b0,b1,b2,b3}
                \fill (\p) circle (0.8pt);

            \draw[orbit] (a0) to[bend left=25]  (a1);
            \draw[orbit] (a1) to[bend right=25] (a2);
            \draw[orbit] (a2) to[bend left=25]  (a3);
            \draw[orbit] (b0) to[bend right=25] (b1);
            \draw[orbit] (b1) to[bend right=25] (b2);
            \draw[orbit] (b2) to[bend left=25]  (b3);

            \node[above=3pt] at (a0) {$q_0$};
            \node[above=3pt] at (b0) {$q_1$};
            \node[left=5pt] at (a3) {$f^{3}(q_0)$};
            \node[above=5pt]  at (b3) {$f^{3}(q_1)$};
        \end{tikzpicture}
        \caption{The sets from Claim~\ref{claim:nowhere-bulging} for $k = 1$ and $n_k = 3$.
        The disks $\Delta^{n_k}_{m,k}$ around $f^{n_k}(q_m)$ lie outside $\widetilde{H}_k$ but inside $\widetilde{H}_{k+1}$.}
        \label{fig:nowhere-bulging-sets}
        \end{figure}

    \begin{subproof}[Proof of Claim~{\ref{claim:nowhere-bulging}}]
        The proof is constructive. 
        We redefine $n_0 \coloneqq 0$.
        Let $R_0 \coloneqq 0 $ so that $\widetilde{H}_0 = \set{0}$. 
        Up to conjugating by a translation, we may assume that $q_0 \neq 0$.
        Choose any $\rho_0 > 0$ with $ 0 \not\in \overline{D(q_0, \rho_0)} = \Delta^{0}_{0,0}$ and so small that \eqref{eq:nowhere-bulging-5} holds.

        Assuming that $R_0, \dots, R_k$, $n_0, \dots, n_k$, and $\rho_0, \dots, \rho_k$ have been constructed in accordance with the required properties, we construct the next elements.

        Since $\widetilde{H}_k \cup \bigcup_{m = 0}^k \bigcup_{j = 0}^{n_k} \Delta^j_{m,k}$ is bounded, there exists $R_{k+1} > k + 1$ with
        \begin{equation*}
            \widetilde{H}_k \cup \bigcup_{m = 0}^k \bigcup_{j = 0}^{n_k} \Delta^j_{m,k} \subset \overline{D(0, R_{k+1})} = \widetilde{H}_{k+1}.
        \end{equation*}
        By \eqref{eq:nowhere-bulging-4}, there exists $\ell \geq k+1$ such that
        \begin{equation}\label{eq:nowhere-bulging-6}
            f^{n_\ell}(q_m) \in \CC \setminus \widetilde{H}_{k+1}
        \end{equation}
        for all $ m \in \set{ 0,\dots, k+1 }$. 
        Up to passing to a subsequence of iterates, we may assume that $\ell = k+1$.
        Since, by our assumption on $Q$, the points $f^j(q_m)$, $0 \leq j \leq n_{k+1}$ and $0 \leq m \leq k+1$, are pairwise distinct, there exists $\rho_{k+1} > 0$ such that all sets $\Delta^j_{m,k+1}$ are pairwise disjoint.
        By \eqref{eq:nowhere-bulging-6} and choosing a smaller $\rho_{k+1}$ if necessary, we may assume that $\widetilde{H}_{k+1}$ and $ \Delta^{n_{k+1}}_{0,{k+1}} \cup \dots \cup \Delta^{n_{k+1}}_{k+1,k+1}$ are disjoint, and, by uniform continuity on compact sets, that \eqref{eq:nowhere-bulging-5} holds. 
    \end{subproof}

    \textit{Proof of Lemma~{\ref{lemma:nowhere-bulging-1}} (continued).}
    For $k\in\NN$, define
    \begin{equation*}
        H_k \coloneqq \widetilde{H}_k \cup \bigcup_{m = 0}^k \bigcup_{j = 0}^{n_k - 1} \Delta^j_{m,k}.
    \end{equation*}
    Observe that each $H_k$ is compact with connected complement and that it is disjoint from the sets $ \Delta^{n_k}_{0,k}, \dots, \Delta^{n_k}_{k,k}$ by the previous claim. 
    Furthermore, $H_k \subset H_{k+1}$ and $\CC = \bigcup_{k \in \NN} H_k$.

    The following lemma provides the key step of the construction.
    The conditions are chosen to induce the desired behavior for selected orbits (properties~\ref{prop:nowhere-bulging-lemma-2-2} and \ref{prop:nowhere-bulging-lemma-2-3}), to allow the application of Runge's Theorem in the construction (properties~\ref{prop:nowhere-bulging-lemma-2-4} and \ref{prop:nowhere-bulging-lemma-2-5}), and to ensure that this behavior persists for the limit function (properties~\ref{prop:nowhere-bulging-lemma-2-6} and \ref{prop:nowhere-bulging-lemma-2-7}).

    \begin{lemma}\label{lemma:nowhere-bulging-2}
        There exist a sequence of entire functions $(h_k)_{k\in\NN}$, two sequences of nonzero complex numbers $(w_k)_{k\in\NN}$ and $(w'_k)_{k\in\NN}$, and two sequences of positive real numbers $(r_k)_{k\in\NN}$ and $(\delta_k)_{k\in\NN}$ such that if we define $\zeta_{k,m} \coloneqq \pi_z F_{h_{k-1}}^{n_k}(q_m, w_k)$ and $\zeta'_{k,m} \coloneqq \pi_z F_{h_{k-1}}^{n_k}(q_m, w'_k)$ together with $A^k_m \coloneqq \overline{D(\zeta_{k,m}, r_k)}$ and $B^k_m \coloneqq \overline{D(\zeta'_{k,m}, r_k)}$ for $m = 0, \dots, k$, then we have for all $k \geq 1$,
        \begin{enumerate}[label=(\roman*)]
            \item $\abs{w_k} \leq \frac{1}{k}$, $\abs{w'_k} \leq \frac{1}{k}$, $\delta_k < \frac{1}{4}$\label{prop:nowhere-bulging-lemma-2-1},
            \item $0 < \abs{g^{j}(w_k)} < \frac{1}{4}$ and $ 0 < \abs{g^{j}(w'_k)} < \frac{1}{4}$ for all $j \in \set{0, \dots, n_k + 1}$,
            \item for all $m \in \set{0, \dots, k}$,
            \begin{equation*}
                \norm*{h_k + \frac{1}{g^{n_k}(w_k)} f(\zeta_{k,m})}_{A_m^k} < \frac{1}{4},
            \end{equation*} \label{prop:nowhere-bulging-lemma-2-2}
            \item for all $m \in \set{0, \dots, k}$,
            \begin{equation*}
                \norm*{h_k + \frac{1}{g^{n_k}(w_k')} (f(\zeta'_{k,m}) - (k+1))}_{B_m^k} < \frac{1}{4},
            \end{equation*} \label{prop:nowhere-bulging-lemma-2-3}
            \item $A^k_m \subset \Delta_{m,k}^{n_k}$ and $B^k_m \subset \Delta_{m,k}^{n_k}$ for all $m \in \set{0,\dots,k}$, \label{prop:nowhere-bulging-lemma-2-4}
            \item the sets $A^k_m$ and $B^k_m$ are disjoint, \label{prop:nowhere-bulging-lemma-2-5}
            \item $\norm{h_k - h_{k-1}}_{H_k} < \frac{1}{2^{k+1}} \min\set{\delta_0, \dots, \delta_k}$, \label{prop:nowhere-bulging-lemma-2-6}
            \item for any entire function $\widetilde{h}$, the inequality $\norm{h_{k-1} - \widetilde{h}}_{H_k} < \delta_k$ implies
            \begin{equation*}
                \pi_z F_{\widetilde{h}}^{n_k}(q_m, w_k) \in A_m^k
            \end{equation*} 
            and 
            \begin{equation*}
                \pi_z F_{\widetilde{h}}^{n_k}(q_m, w'_k) \in B_m^k
            \end{equation*} 
            for all $m \in \set{0, \dots, k}$. \label{prop:nowhere-bulging-lemma-2-7}
        \end{enumerate}
    \end{lemma}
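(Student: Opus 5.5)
We construct all sequences by induction on $k$, starting from $h_0 \coloneqq 0$ (any entire function works). Suppose $h_0,\dots,h_{k-1}$, $w_j,w'_j$, $r_j$, $\delta_j$ for $j<k$ have been chosen with (i)--(vii). The inductive step has four parts: choose $w_k,w'_k$; compute $\zeta_{k,m},\zeta'_{k,m}$ and choose $r_k$; apply Runge to get $h_k$; then choose $\delta_k$ by continuity. The subtle point is the order of choices, since $\delta_k$ must appear in the bound (vi) for $h_k$ itself.

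\textbf{Choice of $w_k,w'_k$.} Since $g$ is nonconstant with $g(0)=0$, the zeros of $g^j$ for $j\le n_k+1$ near $0$ are isolated. Hence we can pick distinct nonzero $w_k,w'_k$ of modulus $\le 1/k$ so small that $0<\abs{g^j(w_k)},\abs{g^j(w'_k)}<1/4$ for $j\le n_k+1$; this gives (i) and (ii).

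\textbf{Position of the points $\zeta_{k,m},\zeta'_{k,m}$.} At $w=0$ we have $\pi_z F^{n_k}_{h_{k-1}}(q_m,0)=f^{n_k}(q_m)$, the centre of $\Delta^{n_k}_{m,k}$. The map $w\mapsto \pi_zF^{n_k}_{h_{k-1}}(q_m,w)$ is continuous, so shrinking $w_k,w'_k$ puts $\zeta_{k,m},\zeta'_{k,m}$ in the interior of $\Delta^{n_k}_{m,k}$. We also need $\zeta_{k,m}\neq\zeta'_{k,m}$ for each $m$. The map $w\mapsto \pi_zF^{n_k}_{h_{k-1}}(q_m,w)$ is holomorphic and nonconstant near $0$, or else we perturb $w'_k$. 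We then choose $w'_k$ off the countably many bad values, keeping $w_k$ fixed, so each $\zeta'_{k,m}\ne\zeta_{k,m}$. Then pick $r_k>0$ so small that $A^k_m,B^k_m\subset\Delta^{n_k}_{m,k}$ and $A^k_m\cap B^k_m=\emptyset$. This gives (iv) and (v).

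\textbf{Runge step.} Let $\varepsilon_k\coloneqq 2^{-(k+1)}\min\{\delta_0,\dots,\delta_{k-1}\}$. Provisionally we aim at $\min\{\delta_0,\dots,\delta_k\}$ and repair this below. Apply \Cref{thm:runge-1}, or simply Runge's theorem for finitely many disjoint compact sets with connected complement, to the set
\[
E \coloneqq H_k\cup\bigcup_{m\le k}(A^k_m\cup B^k_m).
\]
Its pieces are disjoint, since $\Delta^{n_k}_{m,k}$ is disjoint from $H_k$ and the $\Delta^{n_k}_{m,k}$ are pairwise disjoint by Claim~\ref{claim:nowhere-bulging}(2),(4). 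The target function $\psi$ is $h_{k-1}$ on $H_k$, the constant $-f(\zeta_{k,m})/g^{n_k}(w_k)$ on $A^k_m$, and $-(f(\zeta'_{k,m})-(k+1))/g^{n_k}(w'_k)$ on $B^k_m$. Approximating $\psi$ to within $1/4$ on the $A,B$ disks and within $\varepsilon_k$ on $H_k$ gives (iii) and (iv).

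\textbf{Choice of $\delta_k$ and the ordering problem.} Property (vii) concerns $h_{k-1}$, not $h_k$, so $\delta_k$ can be chosen before $h_k$. This resolves the issue. The map $\widetilde h\mapsto \pi_zF^{n_k}_{\widetilde h}(q_m,w)$ depends only on values of $\widetilde h$ along the first $n_k$ points of the orbit. For $w=w_k$ these points lie in $\Delta^j_{m,k}\subset H_k$ for $j<n_k$, after shrinking $w_k$ once more so that the unperturbed orbit pieces stay well inside those disks. A finite induction on $j$ using uniform continuity of $f$ on compacta then yields $\delta_k<1/4$: if $\norm{\widetilde h-h_{k-1}}_{H_k}<\delta_k$, the iterates stay in the disks and the $n_k$-th $z$-coordinate lies within $r_k$ of $\zeta_{k,m}$, i.e.\ in $A^k_m$. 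The same holds for $w'_k$ and $B^k_m$, giving (vii).

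With $\delta_k$ fixed, we run the Runge step with tolerance $2^{-(k+1)}\min\{\delta_0,\dots,\delta_k\}$ on $H_k$, which yields (vi). The main obstacle is this bookkeeping: ensuring the orbit up to time $n_k-1$ really lies in $H_k$ (where closeness is controlled) while time $n_k$ lies in $\Delta^{n_k}_{m,k}$, outside $H_k$ (where $h_k$ is free). This is exactly what Claim~\ref{claim:nowhere-bulging} was designed to provide.
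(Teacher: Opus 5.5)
Your overall scheme matches the paper's: choose $w_k,w_k'$, then $r_k$, then $\delta_k$ by continuity in $\widetilde h$, and only then apply Runge with tolerance $2^{-(k+1)}\min\{\delta_0,\dots,\delta_k\}$. Those steps are fine. The gap is in separating $\zeta_{k,m}$ from $\zeta'_{k,m}$. You assert that $w\mapsto\pi_zF^{n_k}_{h_{k-1}}(q_m,w)$ is nonconstant ``or else we perturb $w_k'$'', but nothing guarantees nonconstancy, and moving $w_k'$ is useless when the map is constant in $w$. With your own start $h_0:=0$ this already fails at $k=1$. Indeed, $F_{h_0}(z,w)=(f(z),g(w))$, so $\pi_zF^{n_1}_{h_0}(q_m,w)=f^{n_1}(q_m)$ for every $w$. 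Hence $\zeta_{1,m}=\zeta'_{1,m}$ and $A^1_m=B^1_m$ whatever $w_1,w_1'$ are, so property (vi) fails. The Runge step would then have to make $h_1$ simultaneously $\tfrac14$-close, on one disk, to the two (in general far apart) constants $-f(\zeta_{1,m})/g^{n_1}(w_1)$ and $-(f(\zeta_{1,m})-2)/g^{n_1}(w_1')$. For $k\geq 2$, the function $h_{k-1}$ produced by Runge carries no nonconstancy guarantee either.

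The missing step, which the paper supplies, is to perturb $h_{k-1}$ itself, at the start of step $k$ before $\zeta_{k,m}$ is fixed. This is harmless. The conditions already imposed on $h_{k-1}$, namely (iii), (iv) and (vii) at level $k-1$, are strict inequalities on compact sets, and $\zeta_{k-1,m}$, $A^{k-1}_m$, $B^{k-1}_m$, $\delta_{k-1}$ involve only $h_{k-2}$. A suitable small perturbation exists, as follows.
\begin{itemize}
\item Fix a small $w_*\neq 0$ with $g^{n_k-1}(w_*)\neq 0$. Let $p$ be a polynomial equal to $1$ at the points $\pi_zF^{n_k-1}_{h_{k-1}}(q_m,w_*)$ and to $0$ at $\pi_zF^{j}_{h_{k-1}}(q_m,w_*)$ for $j<n_k-1$, $m\le k$. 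These points are pairwise distinct for small $w_*$ by the choice of $Q$.
\item Then the $c$-derivative of $\pi_zF^{n_k}_{h_{k-1}+cp}(q_m,w_*)$ at $c=0$ equals $g^{n_k-1}(w_*)\neq 0$.
\item The set of $c$ for which $w\mapsto\pi_zF^{n_k}_{h_{k-1}+cp}(q_m,w)$ is constant is a common zero set of entire functions of $c$. It is not all of $\CC$: otherwise the value at $w_*$ would equal $f^{n_k}(q_m)$ for every $c$. Hence it is discrete.
\item So arbitrarily small admissible $c$ exist for all $m\le k$ at once.
\end{itemize}

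There are also two minor slips. First, any further shrinking of $w_k$ (to keep the orbit pieces at times $j<n_k$ inside the $\Delta^j_{m,k}$) must happen before $\zeta_{k,m}$ and $r_k$ are fixed, not in the $\delta_k$ step. Second, from the disk conditions on, your item labels are shifted by one relative to the statement: disjointness is (vi), the telescoping bound is (vii), and the $\delta_k$-property is (viii).
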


    \begin{proof}[Proof of Lemma~{\ref{lemma:nowhere-bulging-2}}]
        The proof is constructive.
        To start the construction, let $w_0$ and $w'_0$ be any nonzero complex numbers in $D\paren*{0,\frac{1}{4}}$, let $h_0 \coloneqq \id_{\CC}$, and choose any $ 0 < \delta_0  < \frac{1}{4}$ as well as $r_0 > 0$.

        Now, for $k \geq 0$, given any entire functions $h_0, \dots, h_k$, any nonzero complex numbers $w_0, \dots, w_k$ and $w'_0, \dots, w'_k$, and any positive real numbers $r_0, \dots, r_k$ and $\delta_0, \dots, \delta_k$ satisfying properties~\ref{prop:nowhere-bulging-lemma-2-1}--\ref{prop:nowhere-bulging-lemma-2-7}, we construct $h_{k+1}$, $w_{k+1}$, $w'_{k+1}$, $\delta_{k+1}$, and $r_{k+1}$ satisfying the same properties.

        Since $\pi_z F^j_{\widetilde{h}}(z,0) = f^j(z)$ for all $z \in \CC$, $j\in\NN$, and $\widetilde{h}$ entire, by continuity, we can find $w_{k+1}, w'_{k+1} \in D(0,\frac{1}{k+1})\setminus \set{0}$ small enough that 
        \begin{equation}\label{eq:nowhere-bulging-7}
            \pi_z F^{j}_{h_k}(q_m,w_{k+1}) \in \interior{\Delta}^{j}_{m,k+1} \text{ and } \pi_z F^{j}_{h_k}(q_m,w'_{k+1}) \in \interior{\Delta}^{j}_{m,k+1}
        \end{equation}
        for all $m \in \set{0, \dots, k+1}$ and $j \in \set{0, \dots, n_{k+1}}$, and such that
        \begin{equation*}
            \abs{g^{j}(w_{k+1})} < \frac{1}{4} \text{ and } \abs{g^{j}(w'_{k+1})} < \frac{1}{4} 
        \end{equation*} 
        for all $j \in \set{0, \dots, n_{k + 1} + 1}$.
        In particular, from \eqref{eq:nowhere-bulging-7}, we deduce that
        \begin{equation}\label{eq:nowhere-bulging-8}
            \pi_z F^{j}_{h_k}(q_m,w_{k+1}) \in \interior{H}_{k+1} \text{ and } \pi_z F^{j}_{h_k}(q_m,w'_{k+1}) \in \interior{H}_{k+1}
        \end{equation}
        for all $m \in \set{0, \dots, k+1}$ and $j \in \set{0, \dots, n_{k+1} - 1}$. 
        Since $g$ is nonconstant, the zero at $0$ of each $g^j$, $j \in \NN$, is isolated. 
        Hence, up to choosing smaller $w_{k+1}$ and $w'_{k+1}$, we obtain $g^{j}(w_{k+1}) \neq 0$ and $ g^{j}(w'_{k+1}) \neq 0$ for all $j \in \set{0, \dots, n_{k+1} + 1}$.
        Up to slightly perturbing $h_k$, which preserves all conditions obtained so far since these are open, we may assume that the maps
        \begin{equation*}
            w \mapsto \pi_z F^{n_{k+1}}_{h_k}(q_m,w)
        \end{equation*}
        are nonconstant for all $m \in \set{0, \dots, k+1}$.
        Up to slightly changing $w'_{k+1}$, we can then assume that
        \begin{equation*}
            \pi_z F^{n_{k+1}}_{h_k}(q_m,w_{k+1}) \neq \pi_z F^{n_{k+1}}_{h_k}(q_m,w'_{k+1})  
        \end{equation*}
        for all $m \in \set{0, \dots, k+1}$.
        Hence, we can find $r_{k+1} > 0$ such that properties~\ref{prop:nowhere-bulging-lemma-2-4} and \ref{prop:nowhere-bulging-lemma-2-5} hold for $k+1$.

        Then, by continuity of $h \mapsto \pi_z F_h^j(q_m,w)$ at $h_k$ with respect to uniform convergence on $H_{k+1}$ for $w \in \set{w_{k+1}, w'_{k+1}}$ and $m \in \set{0, \dots, k+1}$, which holds by \eqref{eq:nowhere-bulging-8}, there exists $0 < \delta_{k+1} < \frac{1}{4}$ such that $\norm{\widetilde{h} - h_k}_{H_{k+1}} < \delta_{k+1}$ ensures
        \begin{equation*}
            \pi_z F_{\widetilde{h}}^{n_{k+1}}(q_m, w_{k+1}) \in A_m^{k+1}
        \end{equation*} 
        and 
        \begin{equation*}
            \pi_z F_{\widetilde{h}}^{n_{k+1}}(q_m, w'_{k+1}) \in B_m^{k+1}
        \end{equation*} 
        for all $m \in \set{0, \dots, k+1}$.
        This yields property~\ref{prop:nowhere-bulging-lemma-2-7}.

        Lastly, we construct $h_{k+1}$. Observe that $K \coloneqq H_{k+1} \cup \bigcup_{m = 0}^{k+1} \paren*{A_m^{k+1} \cup B_m^{k+1}}$ satisfies all conditions of Runge's Theorem~\ref{thm:runge-1}. 
        Hence, we can approximate the holomorphic function $\Phi \colon K \to \CC$,
        \begin{equation*}
            \Phi(z) \coloneqq
            \begin{cases}
                h_k(z) & \text{if } z \in H_{k+1}, \\
                -\frac{1}{g^{n_{k + 1}}(w_{k+1})} f(\zeta_{k+1, m}) & \text{if } z \in A_m^{k+1} \text{ for some } m, \\
                -\frac{1}{g^{n_{k + 1}}(w'_{k+1})} (f(\zeta'_{k+1, m}) - (k+1)) & \text{if } z \in B_m^{k+1}\text{ for some } m,
            \end{cases}
        \end{equation*}
        up to the error $\frac{1}{2^{k+2}} \min\set{\delta_0, \dots, \delta_{k+1}}$ by an entire function $h_{k+1}$. 
        Altogether, we obtain $w_{k+1}$, $w'_{k+1}$, $\delta_{k+1}$, $r_{k+1}$, and $h_{k+1}$ satisfying properties~\ref{prop:nowhere-bulging-lemma-2-1}--\ref{prop:nowhere-bulging-lemma-2-7}.
    \end{proof}

    \textit{Proof of Lemma~{\ref{lemma:nowhere-bulging-1}} (continued).}
    We use the sequences constructed in \Cref{lemma:nowhere-bulging-2} and show that the sequence $(h_k)_{k\in\NN}$ has a uniform limit on $\CC$. 
    First, we write $h_k$ as a telescoping sum. 
    Fix any $\ell \in \NN$. 
    Then, for every $k > \ell$, we have
    \begin{equation*}
        h_k = h_\ell +  \sum_{j = \ell}^{k-1} \paren{h_{j + 1} - h_{j}}.
    \end{equation*}
    Property~\ref{prop:nowhere-bulging-lemma-2-6} yields
    \begin{equation*}
        \sum_{j = \ell}^{\infty} \norm{h_{j + 1} - h_{j}}_{H_{\ell}} \leq \sum_{j = \ell}^{\infty}  \norm{h_{j + 1} - h_{j}}_{H_{j+1}}  < \sum_{j = \ell}^{\infty} \frac{1}{2^{j+2}} < \infty.
    \end{equation*}
    Therefore, the partial sums form a uniform Cauchy sequence on $H_{\ell}$; hence, they have a uniform limit function $\lim_{k \to \infty} h_k = h_{\ell} + \sum_{j = \ell}^{\infty} (h_{j + 1} - h_{j})$ on $H_{\ell}$. 

    Since the compact sets $H_k$ are an increasing sequence exhausting $\CC$, we can apply Weierstraß's Theorem~\ref{thm:weierstrass} and obtain an entire limit function $h = \lim_{k\to \infty}h_k$. 
    Moreover, for any $k \in \NN$, this limit function satisfies
    \begin{equation}\label{eq:nowhere-bulging-9}
        \begin{split}
            \norm{h - h_k}_{H_{k+1}} &\leq \sum_{j = k}^{\infty} \norm{h_{j + 1} - h_{j}}_{H_{j+1}} 
            \leq \sum_{j = k+1}^{\infty} \frac{1}{2^{j+1}} \min\lbrace \delta_0, \dots, \delta_j \rbrace \\
            &\leq \sum_{j = k+1}^{\infty} \frac{1}{2^{j+1}} \delta_{k+1} < \delta_{k+1}.
        \end{split}
    \end{equation}

    Our construction of $h$ ensures that the map $F_h(z,w) = (f(z) + wh(z), g(w))$, together with the sequences $(w_k)_{k\in\NN}$ and $(w_k')_{k\in\NN}$, satisfies the assertions of \Cref{lemma:nowhere-bulging-1}.

    Indeed, by property~\ref{prop:nowhere-bulging-lemma-2-1} of \Cref{lemma:nowhere-bulging-2}, both $(w_k)_{k\in\NN}$ and $(w_k')_{k\in\NN}$ are sequences of nonzero complex numbers with $\lim_{k\to\infty} w_k = \lim_{k\to\infty} w_k' = 0$. 

    Also, fixing any $k \geq 1$ and $m \in \set{0, \dots, k}$, inequality~\eqref{eq:nowhere-bulging-9} allows us to apply property~\ref{prop:nowhere-bulging-lemma-2-7}, which yields
    \begin{equation*}
        \pi_z F_{h}^{n_k}(q_m, w_k) \in A_m^k \text{ and }  \pi_z F_{h}^{n_k}(q_m, w'_k) \in B_m^k.
    \end{equation*}
    Thus, we obtain
    \begin{equation*}
        \begin{split}
            \abs{\pi_z F_{h}^{n_k+1}(q_m, w_k)} &= \abs{f(\pi_z F_{h}^{n_k}(q_m, w_k)) + g^{n_k}(w_k) h(\pi_z F_{h}^{n_k}(q_m, w_k))}\\
            &\leq \abs{f(\pi_z F_{h}^{n_k}(q_m, w_k)) - f(\zeta_{k,m})}  \\
            & \qquad + \abs{f(\zeta_{k,m}) +  g^{n_k}(w_k) h(\pi_z F_{h}^{n_k}(q_m, w_k))}\\
            &< \frac{1}{4} +  \abs{g^{n_k}(w_k)} \abs*{ h(\pi_z F_{h}^{n_k}(q_m, w_k)) + \frac{1}{g^{n_k}(w_k)} f(\zeta_{k,m})} \\
            &\leq \frac{1}{4} +  \norm*{h + \frac{1}{g^{n_k}(w_k)} f(\zeta_{k,m})}_{A_m^k}
        \end{split}
    \end{equation*}
    where we used \eqref{eq:nowhere-bulging-5} and $\abs{g^{n_k}(w_k)} < 1$.
    Observe that $A_m^k \subset \Delta_{m,k}^{n_k}\subset H_{k+1}$ by property~\ref{prop:nowhere-bulging-lemma-2-4} and \Cref{claim:nowhere-bulging}.
    Applying properties~\ref{prop:nowhere-bulging-lemma-2-1} and \ref{prop:nowhere-bulging-lemma-2-2} together with \eqref{eq:nowhere-bulging-9} then yields
    \begin{equation*}
        \begin{split}
            \norm*{h + \frac{1}{g^{n_k}(w_k)} f(\zeta_{k,m})}_{A_m^k} &\leq  \norm*{h - h_k}_{A_m^k} + \norm*{h_k + \frac{1}{g^{n_k}(w_k)} f(\zeta_{k,m})}_{A_m^k} \\
            &\leq \norm*{h - h_k}_{H_{k+1}} + \frac{1}{4} \\
            &< \delta_{k+1} + \frac{1}{4} < \frac{1}{2}.
        \end{split}
    \end{equation*}
    Therefore, we obtain $\abs{\pi_z F_{h}^{n_{k}+1}(q_m, w_k)} < \frac{3}{4}$, and consequently,
    \begin{equation*}
        \norm{F_{h}^{n_{k}+1}(q_m, w_k)} \leq \abs{\pi_z F_{h}^{n_{k}+1}(q_m, w_k)} + \abs{g^{n_k + 1}(w_k)} < \frac{3}{4} + \frac{1}{4} = 1,
    \end{equation*}
    which proves \eqref{eq:nowhere-bulging-1}.
    For $w'_k$, we observe that, as before,
    \begin{equation*}
        \abs{\pi_z F_{h}^{n_k+1}(q_m, w'_k) - (k+1)} \leq \frac{1}{4} + \norm*{h + \frac{1}{g^{n_k}(w'_k)} (f(\zeta'_{k,m}) - (k+1))}_{B_m^k}
    \end{equation*}
    and 
    \begin{equation*}
        \norm*{h + \frac{1}{g^{n_k}(w'_k)} (f(\zeta'_{k,m}) - (k+1))}_{B_m^k} < \frac{1}{2},
    \end{equation*}
    that is, $\pi_z F_{h}^{n_k+1}(q_m, w'_k) \in D\paren*{k+1, \frac{3}{4}}$.
    Thus,
    \begin{equation*}
        \norm{F_{h}^{n_k+1}(q_m, w'_k)} \geq \abs{\pi_z F_{h}^{n_k+1}(q_m, w'_k)} > k,
    \end{equation*}
    which completes the proof.
\end{proof}

\section{Partial Bulging}\label{sec:partial-bulging}
Transcendental skew products can have Fatou components which are partially bulging in the sense of \Cref{def:bulging}.
We give examples of a partially bulging wandering domain and of a partially bulging Baker domain.
We start with the partially bulging wandering domain.

\begin{theorem}\label{thm:partial-wandering}
    Let $f(z) \coloneqq z + 2\pi i + \frac{1}{2} - \frac{1}{2}\exp(z)$ and let $g$ be a nonconstant entire function with an attracting fixed point at $0$.
    Then there exists an entire function $h$ such that the skew product $F(z,w) = (f(z) + w h(z), g(w))$ has a partially bulging wandering domain.
\end{theorem}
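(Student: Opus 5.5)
We first recall the structure of this fiber map; it is Baker's/Herman's classical example. The map $f(z) = z + 2\pi i + \frac12 - \frac12 e^{z}$ is a lift, under $\exp$, of a map with a superattracting-type basin. Indeed, $f(z+2\pi i) = f(z) + 2\pi i$, and $f$ semiconjugates via $\zeta = e^z$ to $\zeta \mapsto e^{1/2}\zeta e^{-\zeta/2}$, which has an attracting fixed point at $\zeta = 1$. The corresponding basin lifts to infinitely many Fatou components $U_k$ of $f$. These are translates $U_k = U_0 + 2\pi i k$, are mapped by $f(U_k)\subset U_{k+1}$, and so are escaping wandering domains.

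On $U_0$ the orbits satisfy $\re f^n(z) \to 0$ and $f^n(z) - 2\pi i n \to 0$, i.e. $f^n(z) \approx 2\pi i n$. More precisely, there is a small disk $D_0$ around $0$ with $f(D_0 + 2\pi i k) \subset D_0 + 2\pi i(k+1)$ and derivative small, since $f'(0) = \frac12$. We plan to make $U_0$ partially bulging. Concretely, we choose $h$ so that:
\begin{enumerate}
\item[(a)] on a neighborhood of the strip $S=\set{\abs{\re z}<1}$ near the lifted attracting points, $h$ is bounded, so points near $(z,0)$, $z \in D_0$, stay in the Fatou set;
\item[(b)] arbitrarily close to some other point $(z^*,0)$, $z^*\in U_0$, orbits return to a bounded region and also escape, destroying normality.
\end{enumerate}

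For (a) we take $h$ to be uniformly small, say $\abs{h} \le 1$, on the half-strip $E_1 = \set{\abs{\re z} \le 1/2,\ \im z \ge -1}$. Then for $(z,w)\in D_0\times D(0,\delta)$ we have $\abs{w_n} \to 0$ geometrically. The contraction $\abs{f'}\le 3/4$ on $D_0+2\pi i k$, with the error $\abs{w_n h(z_n)} \le \abs{w_n}$ summable, keeps $z_n - 2\pi i n$ in a small disk. Hence $F^n - (2\pi i n,0)$ is uniformly bounded and in fact converges on this bidisk. Since $F^n$ diverges uniformly there, $(D_0\cap U_0)\times\set{0}\subset\F(F)$.

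For (b) we use the portion of $U_0$ that leaves the strip. $U_0$ contains points $z^*$ with $\re z^*$ very negative, e.g. near $-\infty$ in the left half-plane, where $f(z)\approx z+2\pi i+\frac12$. Points there drift to the right by $\frac12$ per step before entering $D_0$-translates. We fix such a $z^*$ and its finite orbit segment $f^j(z^*)$, $j\le N$, which stays in $\set{\re z \le -1}$ before entering the strip.

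We then construct $h$ via Arakelyan's Theorem~\ref{thm:arakelyan} (or Nersesjan's Theorem~\ref{thm:nersesjan}) on the closed set $E_1 \cup \bigcup_k \overline{D_k}$. Here the $D_k$ are small disjoint disks in the left half-plane around points of orbits of a sequence $q_k\to z^*$ (chosen as in Section~\ref{sec:nowhere-bulging}) at times $n_k$ before they enter the strip. On $E_1$ we prescribe $h \approx 0$. On $D_k$ we prescribe a constant forcing the next iterate to land near $-k$ (far away) for one choice $w'_k$, and near the origin for another $w_k$. This mimics Lemma~\ref{lemma:nowhere-bulging-2}, with Runge's Theorem~\ref{thm:runge-1} replaced by Arakelyan so that the infinite strip constraint is kept. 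The complement of $E_1\cup\bigcup D_k$ in $\Chat{}$ is connected and locally connected at $\infty$ provided the disks accumulate only at $\infty$ away from the strip, so Arakelyan applies. The non-normality argument at $(z^*,0)$ is then verbatim the contradiction argument in the proof of Theorem~\ref{thm:nowhere-bulging}.

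The main obstacle is compatibility. The inductive constructions must keep the small-$w$ orbits of $q_k$ away from the strip until time $n_k$, so that the strip condition on $h$ does not interfere. They must also ensure that modifying $h$ in the left half-plane does not spoil (a). For (a) only $h$ on $E_1$ matters, since those orbits stay in the strip; so the real work is choosing the orbit times and disks in $\set{\re z<-1}$. This uses that $U_0$ reaches arbitrarily far left, which we would verify from the semiconjugacy, since $\zeta$ near $0$ is in the basin of $1$. It also uses that the error tolerance in Arakelyan can be taken uniform, which suffices since the tolerances on each $D_k$ can be fixed in advance.
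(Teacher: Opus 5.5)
Your plan has a genuine gap: requirement (a) by itself already forces $U_0$ to be bulging, so (b) can never be achieved. Every $z^*\in U_0$ satisfies $f^n(z^*)-2\pi i n=b^n(z^*)\to 0$, so $f^N(z^*)\in D_0+2\pi i N$ for some $N$. Your argument in (a) only uses $\abs{h}\le 1$ on the translates $D_0+2\pi i n\subset E_1$. It therefore applies verbatim starting from $D_0+2\pi i N$ and gives $(D_0+2\pi i N)\times\set{0}\subset\F(F)$. Backward invariance of the Fatou set (\Cref{lemma:backward-invariance}) then yields $(z^*,0)\in\F(F)$. In other words, you placed the Fatou region on an absorbing set of the wandering orbit, and controlling $h$ there controls every orbit of $U_0$.

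The forcing in (b) is also inconsistent on its own. Since $q_k\to z^*$, the orbits of the $q_k$ enter the strip, and then stay near $2\pi i n$, after a number of steps bounded independently of $k$. So the times $n_k$ ``before they enter the strip'' are bounded. Along a subsequence with $n_k=n$ fixed, continuity of $F^{n+1}$ gives $F^{n+1}(q_k,w'_k)\to F^{n+1}(z^*,0)$, which is incompatible with landing near $-k$. Equivalently, your disks $D_k$ accumulate at the finite points $f^j(z^*)$, where the prescribed values of $h$ blow up, so Arakelyan's theorem cannot be applied.

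A minor point: $\zeta=0$ is a \emph{repelling} fixed point (multiplier $e^{1/2}$) of $\zeta\mapsto e^{1/2}\zeta e^{-\zeta/2}$, so $\zeta$ near $0$ is not in the basin of $1$. The fact you need still holds, since $(-\infty,0]\subset U_0$ by the real dynamics of $b$.

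The missing idea is how to separate Fatou and Julia orbits inside a basin in which \emph{all} orbits converge to the single orbit $2\pi i n$. Here $h$ must be huge arbitrarily close to $2\pi i n$ for large $n$, while staying small along Fatou orbits that also approach $2\pi i n$.

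The paper takes the Julia point to be the attracting point $0$ itself, whose orbit is exactly $f^n(0)=2n\pi i$. The Fatou region is a $b$-invariant disk $D_r=D(-\frac{1}{2r},\frac{1}{2r})$ that has $0$ on its boundary. It comes from the coordinate $\zeta=1/z$, in which $b$ becomes $\zeta\mapsto 2\zeta+1+\mathcal{O}(1/\zeta)$ and left half-planes are invariant.

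The paper then takes compact subdisks $K_n\subset D_r$ with $b(K_n)+\overline{D(0,\e_n)}\subset K_{n+1}$ and $\min\abs{K_n}>p_n>0$. This allows Runge's Theorem~\ref{thm:runge-1} to be applied to the disjoint compact sets $K_n+2n\pi i$, where $\abs{h}<\e_n$, and $\overline{D(2n\pi i,p_n/2)}$, where $h\approx-2(n+1)\pi i/g^n(w_n)$. The result is $\interior{K_0}\times\set{0}\subset\F(F)$, while the points $(0,w_n)\to(0,0)$ have bounded $(n+1)$st iterates even though $(0,0)$ escapes, so $(0,0)\in\J(F)$.
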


The function $f$ is a modification of Herman's example $z \mapsto z + 2\pi i + 1 - e^z$ (see \cite{Baker1984}), in which the superattracting fixed point of $z \mapsto z + 1 - e^z$ at $0$ is replaced by an attracting one.
The following lemmata summarize the key properties needed below.

\begin{lemma}[{\cite[Section 5]{Baker1984}}]\label{lemma:baker}
    Let $b(z) = z + \frac{1}{2} - \frac{1}{2}\exp(z)$ and $f(z) = b(z) + 2 \pi i$.
    Then the immediate basin of attraction $U$ of $b$ at $0$ is a wandering domain for $f$.
\end{lemma}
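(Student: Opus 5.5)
The plan is to exploit the semiconjugacy with the translation $z \mapsto z + 2\pi i$. Since $\exp$ has period $2\pi i$, we have $b(z + 2\pi i) = b(z) + 2\pi i$, so $b$ commutes with this translation. Consequently $f = b + 2\pi i$ commutes with $b$ and with the translation, and
\[
f^n(z) = b^n(z) + 2\pi i n
\]
for all $z\in\CC$ and $n \in \NN$. First I record the local behaviour of $b$ at $0$: $b(0)=0$ and $b'(0) = 1 - \tfrac12 = \tfrac12$, so $0$ is an attracting fixed point and $U$ is its immediate basin. The Fatou set of $b$ is invariant under translation by $2\pi i$, so the translates $U + 2\pi i n$ are immediate basins of the attracting fixed points $2\pi i n$ of $b$.

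Next I show the translates are pairwise disjoint. If $U + 2\pi i n = U$ for some $n \neq 0$, then $U$ would contain the two distinct fixed points $0$ and $2\pi i n$, which is impossible for an immediate basin. Hence $U+2\pi i n \neq U$ for $n\neq 0$, and these sets are distinct Fatou components of $b$.

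The Fatou sets of $f$ and $b$ coincide. Since the translation $\tau(z) = z + 2\pi i$ is an isometry commuting with $b$, the identity $f^n = \tau^n\circ b^n$ shows that normality of $(f^n)$ and of $(b^n)$ are equivalent on any open set. Therefore $U$ is a Fatou component of $f$ with $f^n(U) \subset U + 2\pi i n$, and these image components are pairwise distinct. This makes $U$ wandering, and in fact escaping, since $f^n(z) = b^n(z) + 2\pi i n$ with $b^n(z)\to 0$.

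The only nontrivial point is that the Fatou set is translation invariant and that the components are genuinely distinct. Both follow from the commuting relation together with the fixed-point argument above, so I expect no real obstacle; the statement is essentially that of \cite[Section 5]{Baker1984}.
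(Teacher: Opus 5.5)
Your argument has a genuine gap at the step where you conclude that the Fatou sets of $f$ and $b$ coincide. Normality here is measured in the spherical metric, and post-composing with the translations $\tau^n$ does not preserve it, because their displacement $2\pi i n$ is unbounded. The family $\set{\tau^n}$ is not equicontinuous on $\Chat{}$ near $\infty$.

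Here is a concrete instance. The functions $g_n(z) = 2\pi i n(z-2)$ diverge uniformly to $\infty$ on $D(1,\tfrac12)$, whereas $g_n + 2\pi i n = 2\pi i n(z-1)$ is not normal at $z=1$. So a subsequence $b^{n_k}\to\infty$ tells you nothing about $f^{n_k}=b^{n_k}+2\pi i n_k$, and vice versa.

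What your argument does give is only $U\subset\F(f)$: on $U$ one has $b^n\to 0$ locally uniformly, hence $f^n\to\infty$ locally uniformly. The missing statement is $\partial U\subset\J(f)$, i.e., that the Fatou component of $f$ containing $U$ is no larger than $U$. Without it you cannot conclude that $U$ is a Fatou component of $f$, nor that the components containing $f^n(U)$ are distinct. This identification is the actual content of the cited result; the paper gives no proof of its own and defers to Baker. Calling it ``no real obstacle'' therefore misplaces the difficulty. The $2\pi i$-invariance of $\F(b)$ and $\F(f)$ that you use is fine, since that is conjugation by a single fixed translation.

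One way to close the gap is to prove $\J(b)\subset\J(f)$ using repelling periodic points, which are dense in the Julia set of a transcendental entire function. Suppose $b^N(p)=p$ with $(b^N)'(p)=\lambda$ and $\abs{\lambda}>1$. The identity $f^{kN}=b^{kN}+2\pi i kN$ gives $f^{kN}(p)=p+2\pi i kN$ and $(f^{kN})'(p)=\lambda^k$. So the spherical derivatives $\abs{\lambda}^k/(1+\abs{p+2\pi i kN}^2)$ are unbounded at $p$, and Marty's theorem gives $p\in\J(f)$. Since $\J(f)$ is closed, $\partial U\subset\J(b)\subset\J(f)$, so $U$ is a full component of $\F(f)$. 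Finally, $f^n(U)=b^n(U)+2\pi i n\subset U+2\pi i n$, and each $U+2\pi i n$ is a component of $\F(f)$ by translation invariance. Your fixed-point argument for pairwise distinctness then finishes the proof.
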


The immediate basin $U$ of $b$ contains invariant subsets which are central to our proof.
They are described in the next lemma.

\begin{lemma}\label{lemma:partial-wandering-1}
    Let $b(z) = z + \frac{1}{2} - \frac{1}{2}\exp(z)$.
    For all sufficiently large $r > 0$, the set $D_r \coloneqq D\paren*{ - \frac{1}{2r}, \frac{1}{2r} }$ is a $b$-invariant subset of $U$, that is, $b(D_r) \subset D_r$.
\end{lemma}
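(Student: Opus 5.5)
The plan is to prove $b(\overline{D_r}\setminus\{0\})\subset D_r$, or at least $b(D_r)\subset D_r$, by a direct Taylor estimate at the fixed point $0$. Since $0\in\partial D_r$ (the disk has center $-\frac{1}{2r}$ and radius $\frac{1}{2r}$), the disk is tangent to the imaginary axis at the fixed point. We have $b(0)=0$ and $b'(0)=1-\frac12=\frac12$, so $0$ is attracting. The inclusion $D_r\subset U$ will then follow once invariance is established. Indeed, $D_r$ is connected and contains points arbitrarily close to $0$. Invariance together with contraction will give $b^n\to 0$ uniformly on $D_r$, so $D_r$ lies in the Fatou set and in the basin of $0$. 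Being connected and accumulating at $0$, $D_r$ then lies in the immediate basin, which is the component $U$ containing $0$; here we use that $0\in U$ and that $D_r\cup\{0\}$ is connected.

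For the key estimate, write
\begin{equation*}
b(z)=\tfrac12 z-\tfrac12\bigl(e^z-1-z\bigr)=\tfrac12 z+E(z),\qquad |E(z)|\le \tfrac12|z|^2 e^{|z|}\cdot\tfrac12\le c|z|^2
\end{equation*}
for $|z|\le1$, with $c=e/4$. The disk $D_r$ is exactly the set $\{z:\operatorname{Re}(1/z)<-r\}$. So it suffices to show that $\operatorname{Re}(1/z)<-r$ implies $\operatorname{Re}(1/b(z))<-r$. Compute
\begin{equation*}
\frac{1}{b(z)}=\frac{2}{z}\cdot\frac{1}{1+2E(z)/z}=\frac{2}{z}\bigl(1+O(|z|)\bigr)=\frac{2}{z}+O(1),
\end{equation*}
with the $O(1)$ term bounded by an absolute constant $K$ for $|z|\le 1/r\le$ some small threshold, since $|2E(z)/z|\le 2c|z|$. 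Hence
\begin{equation*}
\operatorname{Re}\frac{1}{b(z)}\le 2\operatorname{Re}\frac1z+K<-2r+K\le -r
\end{equation*}
as soon as $r\ge K$. This gives $b(D_r)\subset D_r$ for all large $r$, using $|z|<1/r$ on $D_r$.

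It also shows $\operatorname{Re}(1/b^n(z))\le -r-n(r-K)\to-\infty$, uniformly on $D_r$. Therefore $|b^n|\le 1/|\operatorname{Re}(1/b^n)|\to0$ uniformly, which supplies the uniform convergence used above for membership in $U$.

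The main (mild) obstacle is making the constant $K$ explicit and uniform. One needs $|2E(z)/z|\le\frac12$, which yields
\begin{equation*}
\Bigl|\frac{1}{1+u}-1\Bigr|\le 2|u|,
\end{equation*}
so the error is at most $\frac{2}{|z|}\cdot 2\cdot 2c|z|=8c$. Taking $r\ge\max\{8c,4c\}$ suffices. Everything else is routine.
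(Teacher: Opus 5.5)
Your proof is correct and is essentially the paper's argument. The paper conjugates by $\zeta = 1/z$, obtains $\widetilde{b}(\zeta) = 2\zeta + 1 + \mathcal{O}(1/\zeta)$, and checks that the left half-planes $\{\re \zeta < -r\}$ (the images of $D_r$) are invariant; this is exactly your estimate $\re(1/b(z)) \le 2\re(1/z) + K$, and your version has the advantage of an explicit constant. The one difference is that the paper gets $D_r \subset U$ for free by taking $r$ large enough that $D_r \subset D(0,1/r) \subset U$, since $U$ is an open neighborhood of $0$. Your dynamical argument for this inclusion is valid but unnecessary, and it needs $r > K$ strictly rather than $r \ge K$ so that $-r - n(r-K) \to -\infty$.
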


\begin{proof}[Proof of Lemma~{\ref{lemma:partial-wandering-1}}]
    We rewrite $b$ as $b(z) = \frac{1}{2} \paren*{ z - \sum_{k = 2}^{\infty} \frac{z^k}{k!}}$.
    Using the conjugation $\zeta = \phi(z) \coloneqq \frac{1}{z}$ in a small punctured neighborhood of $0$, we obtain
    \begin{equation*}
        \begin{split}
            \zeta_1 &= \frac{2}{z \paren*{1 - \sum_{k = 2}^{\infty} \frac{z^{k-1}}{k!}}} = \frac{2}{z} \paren*{1  + \sum_{j=1}^{\infty}\paren*{\sum_{k = 2}^{\infty} \frac{z^{k-1}}{k!}}^j} \\
            &= 2\zeta \paren*{ 1 + \frac{1}{2\zeta} + \mathcal{O}\paren*{\frac{1}{\zeta^2}} } = 2\zeta + 1 + \mathcal{O}\paren*{\frac{1}{\zeta}}.
        \end{split}
    \end{equation*}
    Denote the conjugated function by $\widetilde{b} \coloneqq \phi \circ b \circ \phi$, and observe that left half-planes $L_r \coloneqq \set{ \zeta \in \CC \mid \re (\zeta) < -r }$ are $\widetilde{b}$-invariant for $r > 0$ sufficiently large.
    Indeed,
    \begin{equation*}
        \begin{split}
            \re (\zeta_1) &= 2 \re (\zeta) + 1 + \mathcal{O}\paren*{\frac{1}{\abs{\zeta}}} < -2r + 1 + \frac{r}{2} \leq -r
        \end{split}
    \end{equation*}
    for all $\zeta \in L_r$.
    Moreover, these left half-planes are mapped to disks under $\phi$. 
    More precisely, $\phi(L_r) = D_r = D\paren*{ -\frac{1}{2r}, \frac{1}{2r} }$.
    For $0 < r < R$, it holds that $D_R \subset D_r$.
    Let $r > 0$ be large enough that $L_r$ is $\widetilde{b}$-invariant and $D_r \subset U$. Then $D_r \subset U$ is invariant under $b$ since for every $z \in D_r$,
    \begin{equation*}
        b(z) = \phi \circ \widetilde{b} \circ \phi(z) \in \phi \circ \widetilde{b} \circ \phi(D_r) = \phi \circ \widetilde{b}(L_r) \subset \phi(L_r) = D_r,
    \end{equation*}
    which completes the proof.
\end{proof}

We now prove \Cref{thm:partial-wandering}.

\begin{proof}[Proof of Theorem~{\ref{thm:partial-wandering}}]
    Let $0 < \delta_g < 1$ be such that $g(D(0,\delta_g))\subset D(0,\delta_g)$ and points in $D(0,\delta_g)$ converge uniformly to $0$ under iteration of $g$. 

    By \Cref{lemma:partial-wandering-1}, there exists $r > 0$ large enough that $D_r = D\paren*{ -\frac{1}{2r}, \frac{1}{2r} } \subset U$ is a $b$-invariant disk.
    Fix $0 < \e < \frac{1}{2r}$ and define the disk $K_0 \coloneqq \overline{D\paren*{-\frac{1}{2r}, \frac{1}{2r} - \e}} \subset D_r$.
    Then $K_0 $ is compact and simply connected with nonempty interior. 
    Since $b(K_0) \subset D_r$ is compact, it has positive distance to the boundary $\partial D_r$, and hence, $\e_0 \coloneqq \frac{1}{4} \dist(\partial D_r, b(K_0)) > 0$. 
    Thus, 
    \begin{equation*}
        b(K_0) + \overline{D(0, \e_0)}\subset \overline{D\paren*{-\frac{1}{2r}, \frac{1}{2r} - \e_0}} =: K_1 \subset D_r.
    \end{equation*}
    Similarly, we obtain positive real numbers $\e_n$ such that the compact sets $K_{n+1} \coloneqq \overline{D\paren*{-\frac{1}{2r}, \frac{1}{2r} - \e_{n}}}$ satisfy
    \begin{equation*}
        b(K_n) + \overline{D(0, \e_{n})} \subset K_{n+1} \subset D_r.
    \end{equation*}
    By construction, all sets $K_n$ are compact, simply connected, and contained in $D_r$.
    In particular, there are $p_n > 0$ such that $\min{\abs{K_n}} > p_n$.
    Further shrinking $p_n$ if necessary, we may assume that 
    \begin{equation}\label{eq:uniform-continuity}
        \abs{f(z) - 2(n+1)\pi i} <\frac{1}{2}
    \end{equation}
    for all $z \in D(2n\pi i, p_n)$ and $n \in \NN$.

    We now use an approach similar to \cite[Theorem 1.1]{Potthink2026} (see also the proof of \Cref{thm:nowhere-bulging}) to construct the desired function $h$.
    The idea is to force $(0,0)$ to lie in the Julia set of $F$ while keeping the perturbation small on the sets $K_n$.

    Given a sequence $(w_n)_{n\in\NN}$ of nonzero complex numbers in $D(0,\delta_g)$ with $g^k(w_k) \neq 0$ for all $k \in \NN$, recursively define sets $\Omega_{n,0} \coloneqq \set{0}$ and $\Omega_{n,k+1} \coloneqq f(\Omega_{n,k}) - \frac{g^k(w_n)}{g^k(w_k)} 2(k+1)\pi i + g^k(w_n) D\paren*{0, \frac{1}{2}}$ for $n \in \NN$ and $0 \leq k \leq n-1$.
    By continuity and since $f^k(0) = 2k\pi i$, there exists such a sequence $(w_n)_{n\in\NN}$ of nonzero complex numbers which converges to $0$ and satisfies $\Omega_{n,k} \subset D\paren*{2k\pi i, \frac{p_k}{2}}$ for all $n \in \NN$ and $0 \leq k \leq n$.

    Then, Runge's Theorem~\ref{thm:runge-1} applied to the sets $K_n + 2n\pi i$ and $\overline{ D\paren*{2n\pi i, \frac{p_n}{2}}}$ as shown in Figure~\ref{fig:partial-wandering-sets}, $n \in \NN$, yields an entire function $h$ with
    \begin{equation}\label{eq:partial-wandering-1}
        \abs{h(z)} < \e_n
    \end{equation}
    for all $z \in K_n + 2n\pi i$, $n\in\NN$, and
    \begin{equation}\label{eq:partial-wandering-2}
        \abs*{h(z) + \frac{2(n+1)\pi i}{g^n(w_n)}} < \frac{1}{2}
    \end{equation}
    for all $z \in D\paren*{2n\pi i, \frac{p_n}{2}}$, $n\in\NN$.

    \begin{figure}[ht]
        \centering
        \begin{tikzpicture}[scale=1.1]
            \draw[->] (0,-1.4) -- (0,7.4) node[above] {$\im z$};

            \foreach \n/\e in {0/0.35, 1/0.25, 2/0.18}{
                \pgfmathsetmacro{\y}{3*\n}
                \pgfmathsetmacro{\q}{0.4*\e}
                \draw[dashed] (-1,\y) circle (1);
                \filldraw[fill=gray!30] (-1,\y) circle ({1-\e});
                \filldraw[fill=gray!65] (0,\y) circle (\q);
                \fill (0,\y) circle (0.8pt);
            }

            \node at (-1,0) {$K_0$};
            \node at (-1,3) {$K_1 + 2\pi i$};
            \node at (-1,6) {$K_2 + 4\pi i$};
            \node[right] at (0.2,0)  {$0$};
            \node[right] at (0.15,3) {$2\pi i$};
            \node[right] at (0.12,6) {$4\pi i$};
            \node at (-1,7.5) {$\vdots$};

            \foreach \n in {0,1}{
                \pgfmathsetmacro{\y}{3*\n}
                \draw[->, >=stealth] (-2.1,\y+0.35) to[bend left=40]
                    node[left] {$f$} (-2.1,\y+2.65);
            }
        \end{tikzpicture}
        \caption{The sets $K_n + 2n\pi i$ inside the disks $D_r + 2n\pi i$ together with the disks $D(2n\pi i, p_n/2)$ around $f^n(0) = 2n\pi i$ for $n = 0,1,2$.}
        \label{fig:partial-wandering-sets}
        \end{figure}

    We first show that $(0,0)$ lies in the Julia set of the skew product $F(z,w) = (f(z) + wh(z), g(w))$.
    Let $z \in \Omega_{n,k}$ for some $k < n$.
    Then, from $\Omega_{n,k} \subset D\paren*{2k\pi i, \frac{p_k}{2}}$ and \eqref{eq:partial-wandering-2}, it follows that
    \begin{equation*}
        \begin{split}
            \pi_z F(z, g^k(w_n)) &= f(z) + g^k(w_n)h(z) \\
            &\in f(\Omega_{n,k}) + g^k(w_n) \paren*{- \frac{2(k+1)\pi i}{g^k(w_k)} + D\paren*{0,\frac{1}{2}}} \\
            &= f(\Omega_{n,k}) - \frac{g^k(w_n)}{g^k(w_k)} 2(k+1)\pi i + g^k(w_n) D\paren*{0, \frac{1}{2}} = \Omega_{n,k+1}.
        \end{split}
    \end{equation*}
    By induction on $k$, it follows that $\pi_z F^k(0, w_n) \in \Omega_{n,k}$ for $0 \leq k \leq n$, using $F^k(0,w_n) = (\pi_z F^k(0,w_n), g^k(w_n))$ and the inclusion just computed.
    Hence, setting $\zeta_n \coloneqq \pi_z F^n(0,w_n) $ so that $ \zeta_n \in D\paren*{2n\pi i, \frac{p_n}{2}}$, this implies
    \begin{equation*}
        \begin{split}
            \abs{\pi_z F^{n+1}(0,w_n)} &= \abs{f(\zeta_n) + g^n(w_n)h(\zeta_n)} \\
            &\leq \abs*{f(\zeta_{n}) - 2(n+1)\pi i} + \abs{g^n(w_n)}\abs*{h(\zeta_n) + \frac{2(n+1)\pi i}{g^n(w_n)}} \\
            &\leq \abs*{f(\zeta_{n}) - 2(n+1)\pi i} + \frac{\abs{g^n(w_n)}}{2} \\  
            &< \frac{1}{2} + \frac{1}{2} = 1,
        \end{split}
    \end{equation*}
    where we used \eqref{eq:uniform-continuity} in the last step.
    Moreover, in the $w$-coordinate, we have $g^{n+1}(w_n) \in D(0,\delta_g)$ for all $n\in\NN$, and consequently, $\norm{F^{n+1}(0,w_n)}$ is uniformly bounded over $n \in \NN$.
    
    Assume that $(0,0)$ lies in the Fatou set of $F$.
    The iterates diverge at $(0,0)$, so any limit along a subsequence is $\infty$. 
    Normality then yields locally uniform divergence on a neighborhood of $(0,0)$, which contradicts $\norm{F^{n+1}(0,w_n)}$ being bounded while $(0,w_n) \to (0,0)$ as $n \to\infty$.
    Therefore, $(0,0)$ lies in the Julia set of $F$.

    Next, we prove that $\interior{K_0} \times \set{0} \subset \F(F)$.
    We use induction to show that 
    \begin{equation}\label{eq:partial-wandering-induction}
        \pi_z F^n(z,w) \in K_n + 2n\pi i
    \end{equation}
    for all $n\in\NN$ and $(z,w) \in K_0 \times D(0,\delta_g)$. 
    
    The base case, $n = 0$, is trivial.
    Assume \eqref{eq:partial-wandering-induction} holds for some $n \in \NN$. 
    Fix $(z,w) \in K_0 \times D(0,\delta_g)$.
    It is clear that $g^n(w) \in D(0,\delta_g)$. 
    By the induction hypothesis, it holds that $\pi_z F^n(z,w) - 2n\pi i \in K_n$, and therefore,
    \begin{equation*}
        \begin{split}
            f(\pi_z F^n(z,w)) &= f(\pi_z F^n(z,w) - 2 n\pi i) + 2 n\pi i \\
            &= b(\pi_z F^n(z,w) -  2 n\pi i) + (n + 1) 2 \pi i \in b(K_n) +  2 (n+1) \pi i.
        \end{split}
    \end{equation*}
    By \eqref{eq:partial-wandering-1}, we obtain
    \begin{equation*}
        \abs{\omega h(\zeta)} = \abs{\omega}\abs{h(\zeta)} \leq \abs{\omega} \e_{n} \leq \e_{n}
    \end{equation*}
    for all $(\zeta,\omega) \in ( K_n + 2n\pi i) \times D(0,\delta_g)$. Consequently,
    \begin{equation*}
        \begin{split}
            \pi_z F^{n+1}(z,w) &= f(\pi_z F^n(z,w)) + g^n(w) h(\pi_z F^n(z,w)) \\
            &\in b(K_n) +  2 (n+1)\pi i + \overline{D(0,\e_{n})} \\
            &\subset K_{n+1} +  2 (n+1)\pi i.
        \end{split}
    \end{equation*}
    This completes the induction.  

    Since $\min \abs{K_n + 2n \pi i} \to \infty$ as $n \to \infty$, we conclude from \eqref{eq:partial-wandering-induction} that the orbits of any points in $K_0 \times D(0,\delta_g)$ diverge uniformly to $\infty$ in the $z$-coordinate and converge uniformly to $0$ in the $w$-coordinate.
    Thus, it follows that $\interior{K_0} \times D(0,\delta_g) \subset \F(F)$.
    Since $(0,0) \in \J(F)$ while $\interior{K_0} \times \set{0} \subset \F(F)$, we have shown that the wandering domain $U$ is partially bulging.
\end{proof}

Baker domains can also be partially bulging, as can be seen from the following example involving Fatou's function.
Recall that Fatou's function is given by $f(z) = z + 1 + e^{-z}$ and has a Baker domain containing the right half-plane $\set{z \in \CC \mid \re z > 0}$.
It was first described by Fatou in \cite{Fatou1926}.

\begin{theorem}\label{thm:partial-baker}
    Let $f(z) \coloneqq z + 1 + e^{-z}$ and let $g$ be a nonconstant entire function with an attracting fixed point at $0$.
    Then there exists an entire function $h$ such that the skew product $F(z,w) = (f(z) + w h(z), g(w))$ has a partially bulging Baker domain. 
    More precisely, there exist two sequences $(S_k)_{k\in\NN}$ and $(T_k)_{k\in\NN}$ of infinite vertical strips, i.e., sets of the form $\set{ z \in \CC \mid a \leq \re(z) \leq b}$ for $a < b$, which are contained in the Baker domain and satisfy
    \begin{itemize}
        \item $f(T_k) \subset T_{k+1}$ and $f(S_k) \subset S_{k+1}$ for all $k \in \NN$,
        \item $T_0 \times \set{0} \subset \J(F)$, and
        \item $\bigcup_{k\in\NN} \interior{S_k} \times \set{0} \subset \F(F)$.
    \end{itemize}
\end{theorem}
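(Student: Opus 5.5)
We will mirror the proof of \Cref{thm:partial-wandering}, replacing the disks $K_n + 2n\pi i$ by vertical strips and exploiting that $f(z) = z + 1 + e^{-z}$ behaves like the translation $z \mapsto z+1$ on right half-planes. Fix $0<\delta_g<1$ with $g(D(0,\delta_g)) \subset D(0,\delta_g)$ and $g^n \to 0$ uniformly there. For $\re z \geq 2$ we have $\abs{e^{-z}} \leq e^{-2} < \tfrac{1}{4}$, so $\re f(z) \in [\re z + \tfrac34, \re z + \tfrac54]$. We choose the strips as follows. Let $S_0 \coloneqq \set{2 \leq \re z \leq 3}$ and $S_{k+1} \coloneqq \set{a_{k+1} \leq \re z \leq b_{k+1}}$ with $a_{k+1} = a_k + \tfrac12$ and $b_{k+1} = b_k + \tfrac32$; then $f(S_k) + \overline{D(0,\tfrac14)} \subset S_{k+1}$. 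We take the $T_k$ far to the right of all $S_j$ on their respective "levels", but to keep them disjoint from each other and from the $S_j$ it is cleaner to use Arakelyan/Nersesjan instead of Runge. The strips have unbounded imaginary extent, so \Cref{thm:runge-1} does not apply directly.

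The key structural choice is to separate the two families in the \emph{imaginary} direction rather than the real one. Since $f$ commutes with $z \mapsto z + 2\pi i$, both families are genuinely vertical strips, so overlaps are unavoidable in real part. We therefore instead build $h$ on a closed set $E$ consisting of half-strips. Concretely, we let $E$ be the union of $\set{z \in S_k : \im z \geq 0}$-type pieces carrying the "small" prescription $\abs{h} < \e_k$, and small disks $\overline{D(\tau_k, p_k)}$ placed in the lower half-plane along the orbit $\tau_k = f^k(\tau_0)$ of a point $\tau_0 \in T_0$, carrying the prescription $h \approx -(\text{target})/g^k(w_k)$ as in \eqref{eq:partial-wandering-2}. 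However, the statement asks for all of $\interior{S_k}\times\set{0}$ in the Fatou set, and for all of $T_0\times\set{0}$ in the Julia set.

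Our resolution is as follows. By the $2\pi i$-periodicity of $f$ and density, it suffices to make a dense set of points of $T_0$ non-Fatou, exactly as in \Cref{thm:nowhere-bulging} with a countable dense $Q \subset T_0$. We take $E \coloneqq \bigcup_k S_k \cup \bigcup \Delta$, where the $\Delta$ are tiny disks around $f^j(q_m)$ lying in $\bigcup_k T_k$. Here $T_k \coloneqq \set{\alpha_k \leq \re z \leq \beta_k}$ is chosen with $\alpha_k > b_k + 1$ and with growth rates such that $f(T_k)\subset T_{k+1}$ and $T_k \cap \bigcup_j S_j$ stays controlled. Because the $S_j$ drift right at speeds in $[\tfrac12,\tfrac32]$, we choose the widths so that $b_k < \alpha_k$ for all $k$, e.g.\ $\alpha_k = 10 + \tfrac54 k \cdot 2$. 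We must then check that $\bigcup_j S_j$ and $\bigcup_k T_k$ are disjoint. Since both unions are nested-increasing bands, we should instead make the $S_j$ pairwise disjoint and the $T_k$ pairwise disjoint and interleaved, using step sizes close to $1$. This interleaving of strips under a map that shifts real parts by roughly $1\pm e^{-\re z}$ is the first delicate point.

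With $E$ chosen so that $\CC \setminus E$ is connected and locally connected at $\infty$, and such that no component of $\interior{E}$ meets both a compact set and a neighborhood of $\infty$ (the strips $S_k$ escape since $a_k \to \infty$), Nersesjan's \Cref{thm:nersesjan} gives the required entire $h$ with position-dependent errors $\e(z)$. As in \Cref{lemma:nowhere-bulging-2}, we iterate this countably many times and pass to the limit, with errors tending to $0$ along the strips. The Fatou part then follows exactly as in the induction \eqref{eq:partial-wandering-induction}: the neighborhood $S_0 \times D(0,\delta_g)$ maps into $S_k \times D(0,\delta_g)$, and $\re$ tends to $\infty$ uniformly. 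The Julia part follows as in the proof of \Cref{thm:nowhere-bulging}, by sending $(q_m,w_k)$ back near $0$ after $n_k+1$ steps. We expect the main obstacle to be the combinatorial geometry of $E$: keeping the disks around the orbits of the $q_m$ inside the $T$-strips and disjoint from the $S$-strips while preserving the three topological conditions of \Cref{thm:nersesjan}, and making the iterative approximation compatible with the unbounded sets where only smallness is imposed.
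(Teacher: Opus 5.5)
Your overall skeleton is the right one. The Fatou part sends $S_k\times D(0,\delta_g)$ into $S_{k+1}\times D(0,\delta_g)$. The Julia part uses a countable dense $Q\subset T_0$ as in \Cref{thm:nowhere-bulging}; there only density and the closedness of $\J(F)$ matter, not $2\pi i$-periodicity. However, two essential steps fail as written.

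\textbf{The strips.} Your explicit $S_k$ come from the crude bound $\abs{e^{-z}}<\frac14$. Their widths grow linearly, they overlap, and $\bigcup_k\interior{S_k}=\set{\re z>2}$. This is fatal, not cosmetic. With $\abs{h}<\frac14$ on every $S_k$, your own estimate puts $\set{\re z>2}\times D(0,\delta_g)$ in $\F(F)$, and by backward invariance the whole Baker domain would be bulging. Likewise, $\alpha_k=10+\frac52 k$ cannot satisfy $f(T_k)\subset T_{k+1}$, since $f$ shifts real parts by about $1$.

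You recognize that pairwise disjoint, interleaved strips are needed, but you never construct them. The missing observation is that $\abs{e^{-z}}=e^{-\re z}$ is exponentially small near $\re z=a+k$. Hence the spread accumulated along orbits is summable, and the widths can stay bounded. The paper centres $S_k$ at $\re z=a+k$ with half-width $d_k+e_k$. It centres $T_k\subset I_k$ at $\re z=a+k+\frac12$ with half-widths $d_k$ and $d_k+\delta$. Here $d_k=\sum_{j\le k}e^{-j/2-a}$ and $e_k=\sum_{j<k}\e_j$, both kept below $\delta=\frac1{16}$. Then $S_k<I_k<S_{k+1}$, $T_k+D(0,\delta)\subset I_k$, $f(T_k)\subset T_{k+1}$ and $f(S_k)+D(0,\e_k)\subset S_{k+1}$ hold for large $k$, and one reindexes.

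\textbf{The approximation.} \Cref{thm:nersesjan} does not apply. Its third condition fails for any closed set whose interior contains an infinite vertical strip: $\interior{S_0}$ is a single component meeting a compact set and every neighborhood of $\infty$. Your justification ``the strips escape since $a_k\to\infty$'' misreads that condition, which concerns each individual component. So position-dependent errors along the strips are not available.

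What works is \Cref{thm:arakelyan} with a \emph{constant} error at each step of the iteration of \Cref{lemma:nowhere-bulging-2}. Then the exhausting sets on which $h_{k+1}$ approximates $h_k$ must themselves be unbounded, since a compact $H_{k+1}$ would meet later strips on which $0$ is prescribed. The paper uses $H_k=\set{\abs{\re z}\le \max_{s\in S_{k-1}}\re s}\cup I_{k-1}$. These are two disjoint vertical strips that exhaust $\CC$ and contain $S_0,\dots,S_{k-1}$ and the earlier orbit disks. At each step it approximates $h_k$ on $H_{k+1}$, the targets on $A^{k+1}_m,B^{k+1}_m\subset I_{k+1}$, and $0$ on $S_{k+1}$. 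The uniform error is $2^{-(k+2)}\min\set{\delta_0,\dots,\delta_{k+1},\e_0,\dots,\e_{k+1}}$, starting from $h_0=ce^z$ small on $S_0$. Since $S_k\subset H_{k+1}$, all later corrections on $S_k$ total at most $\e_k/2$, which gives $\norm{h}_{S_k}<\e_k$. The same error budget yields the entire limit and the stability property (viii).

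Your set $E=\bigcup_k S_k\cup\bigcup\Delta$ contains no such exhausting sets. So neither the convergence to an entire $h$ nor the persistence of the earlier orbit prescriptions is justified in your plan.
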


\begin{proof}
    First, given two infinite vertical strips $S$ and $T$, we define
    \begin{equation*}
        [S,T] \coloneqq \set{z \in \CC \mid \min_{s\in S}\re(s) \leq \re(z) \leq \max_{t\in T}\re(t)}.
    \end{equation*}
    Furthermore, we write $S < T$ if $\max_{s\in S}\re(s) < \min_{t\in T}\re(t)$.

    It is well known from one-dimensional dynamics that $f$ has a Baker domain $B$ which has $\set{z \in \CC\mid \re(z) \geq R}$ as an absorbing domain (see \Cref{def:absorbing-domain}) for every $R > 0$.
    The right half-plane contains $f$-invariant sequences of infinite vertical strips as detailed in the following claim and shown in Figure~\ref{fig:partial-baker-strips}.
    \begin{claim}\label{claim:partial-baker}
        There exist three sequences $(S_k)_{k\in\NN}$, $(T_k)_{k\in\NN}$, and $(I_k)_{k\in\NN}$ of infinite vertical strips in $\set{z \in \CC \mid \re z > 0}$, a sequence of positive real numbers $(\e_k)_{k\in\NN}$, and $\delta > 0$, such that for all $k \in \NN$,
        \begin{enumerate}[label=(\alph*)]
            \item $S_k < I_k < S_{k+1}$,\label{prop:partial-baker-2}
            \item $T_k + D(0,\delta) \subset I_k$,\label{prop:partial-baker-3}
            \item $f(I_k) \subset I_{k+1}$,\label{prop:partial-baker-4}
            \item $f(T_k) \subset T_{k+1}$, and \label{prop:partial-baker-5}
            \item $f(S_k) + D(0,\e_k) \subset S_{k+1}$. \label{prop:partial-baker-6}
        \end{enumerate}
    \end{claim}

    \begin{figure}[ht]
        \centering
        \begin{tikzpicture}[x=2.6cm, y=1cm]
            \def\a{0.6}   
            \def\H{2}     

            \draw[->] (0,-\H-0.3) -- (0,\H+0.3) node[above] {$\im z$};

            \foreach \k in {0,1,2}{
                \pgfmathsetmacro{\ws}{0.07+0.025*\k}  
                \pgfmathsetmacro{\wt}{0.06+0.02*\k}   
                \pgfmathsetmacro{\wi}{\wt+0.07}       
                \pgfmathsetmacro{\cs}{\a+\k}          
                \pgfmathsetmacro{\ct}{\a+\k+0.5}      

                \fill[gray!10] (\ct-\wi,-\H) rectangle (\ct+\wi,\H);
                \draw (\ct-\wi,-\H) -- (\ct-\wi,\H);
                \draw (\ct+\wi,-\H) -- (\ct+\wi,\H);

                \fill[gray!45] (\ct-\wt,-\H) rectangle (\ct+\wt,\H);

                \fill[gray!45] (\cs-\ws,-\H) rectangle (\cs+\ws,\H);
                \draw (\cs-\ws,-\H) -- (\cs-\ws,\H);
                \draw (\cs+\ws,-\H) -- (\cs+\ws,\H);

                \node[below] at (\cs,-\H) {$S_{\k}$};
                \node[below] at (\ct,-\H) {$T_{\k}$};
                \node[above] at (\ct,\H)  {$I_{\k}$};
            }

            \foreach \k in {0,1}{
                \pgfmathsetmacro{\cs}{\a+\k}
                \pgfmathsetmacro{\ct}{\a+\k+0.5}
                \draw[->, >=stealth] (\cs+0.1,\H+0.3) to[bend left=30]
                    node[above] {$f$} (\cs+0.9,\H+0.3);
                \draw[->, >=stealth] (\ct+0.1,-\H-0.3) to[bend right=30]
                    node[below] {$f$} (\ct+0.9,-\H-0.3);
            }

            \draw[->] (-0.2,0) -- (\a+2.9,0) node[right] {$\re z$};
        \end{tikzpicture}
        \caption{The strips from Claim~\ref{claim:partial-baker} for $k = 0,1,2$, cut off at the top and bottom.
        The function $f$ maps $S_k$ into $S_{k+1}$ and $T_k$ into $T_{k+1}$.}
        \label{fig:partial-baker-strips}
        \end{figure}
    \begin{subproof}
        Fix $\delta \coloneqq \frac{1}{16}$.
        Let $a > 0$ be such that $\sum_{j=0}^{\infty}e^{-\frac{j}{2} - a} < \delta$ and define $d_k \coloneqq \sum_{j=0}^{k}e^{-\frac{j}{2} - a}$.
        Let $(\e_k)_{k\in\NN}$ be any sequence of positive real numbers with $\sum_{j=0}^{\infty} \e_j < \delta$ and define $e_k \coloneqq \sum_{j=0}^{k-1 } \e_j$.

        Define the vertical strips
        \begin{equation*}
            S_k \coloneqq \set*{ z \in \CC \,\middle|\, a + k - d_k - e_k \leq \re(z) \leq a + k + d_k + e_k },
        \end{equation*}
        \begin{equation*}
            I_k \coloneqq \set*{ z \in \CC \,\middle|\, a + k + \frac{1}{2} - d_k - \delta \leq \re(z) \leq a + k + \frac{1}{2} + d_k + \delta },
        \end{equation*}
        and
        \begin{equation*}
            T_k \coloneqq \set*{ z \in \CC \,\middle|\, a + k + \frac{1}{2} - d_k \leq \re(z) \leq a + k + \frac{1}{2} + d_k }.
        \end{equation*}
        By definition, these strips satisfy properties~\ref{prop:partial-baker-2} and \ref{prop:partial-baker-3}.
        Next, for $k\in\NN$ sufficiently large and $z \in S_k$,
        \begin{equation*}
            \begin{split}
                \re(f(z)) &= \re(z) + 1 + \re(e^{-z}) \\
                &\leq a + k + d_k + e_k + 1 + e^{-k - a + d_k + e_k} < a + (k+1) + d_{k+1} + e_k,
            \end{split}
        \end{equation*}
        and similarly, $\re(f(z)) > a + (k+1) - d_{k+1} - e_k$.
        Thus, property~\ref{prop:partial-baker-6} follows for $k$ large enough.
        Properties~\ref{prop:partial-baker-4} and \ref{prop:partial-baker-5} can be deduced analogously. 
        Up to reindexing, the claim follows.
    \end{subproof}
    \textit{Proof of Theorem~{\ref{thm:partial-baker}} (continued).}
    Observe that all the strips defined in \Cref{claim:partial-baker} lie in $\set{z \in \CC\mid \re(z) \geq R}$ for some $R > 0$, and hence, in an absorbing domain of $B$.
    Let $Q = \set{q_j\mid j\in\NN}$ be a countable dense subset of $T_0$ with pairwise disjoint orbits.
    No point in $Q$ is preperiodic for $f$ since orbits escape to infinity.
    The goal is to apply a construction similar to the proof of \Cref{thm:nowhere-bulging} for this set $Q$ while maintaining good control on the strips $S_k$.

    To that end, we change a few steps of the construction as follows.
    Similarly to \Cref{claim:nowhere-bulging}, but with $n_k = k$, there exists a sequence $(\rho_k)_{k\in\NN}$ of positive real numbers such that letting $H_k \coloneqq [-S_{k-1}, S_{k-1}] \cup I_{k-1}$ for $k\geq 1$, where $-S_{k-1} \coloneqq \set{-z \mid z \in S_{k-1}}$, $H_0 = \emptyset$, and $\Delta^j_{m,k} \coloneqq \overline{D(f^j(q_m), \rho_k)}$, the following hold:
    \begin{enumerate}[label=(\arabic*)]
        \item $\Delta^j_{m,k} \subset I_j$ for all $j,m,k \in \NN$; in particular, $\Delta^j_{m,k} \subset H_k$ for all $j \in \set{0, \dots, k-1}$ and $m,k \in \NN$,
        \item for every $k\in\NN$, all the sets $\Delta^j_{m,k}$, $m = 0,\dots,k$, $j = 0, \dots, k$, are pairwise disjoint,
        \item for every $k\in\NN$,
        \begin{equation*}
            H_k \cup \bigcup_{m = 0}^k \Delta^k_{m,k} \subset H_{k+1},
        \end{equation*}
        \item $H_k$ and $ \Delta^{k}_{0,k} \cup \dots \cup \Delta^{k}_{k,k}$ are disjoint, and\label{prop:partial-baker-7}
        \item for all $z, z' \in \Delta^j_{m,k}$, $k\in\NN$, $m = 0,\dots,k$, and $j = 0, \dots, k$,
        \begin{equation*}
            \abs{f(z) - f(z')} < \frac{1}{4}.
        \end{equation*}
    \end{enumerate}
    Observe, as before, that $H_k \subset H_{k+1}$ and $\CC = \bigcup_{k \in \NN} H_k$.
    Furthermore, $H_k$ is disjoint from the sets $ \Delta^{k}_{0,k}, \dots, \Delta^{k}_{k,k}$ by property~\ref{prop:partial-baker-7}.
    In this construction, the set $H_k$ no longer has connected complement in $\CC$ but, as the disjoint union of two infinite vertical strips, it satisfies the conditions of Arakelyan's Theorem~\ref{thm:arakelyan}.

    We then proceed to prove a variant of \Cref{lemma:nowhere-bulging-2} with property~\ref{prop:nowhere-bulging-lemma-2-6} changed to
    \begin{enumerate}[label=(\roman*$'$), start=7]
        \item $\norm{h_k - h_{k-1}}_{H_k} < \frac{1}{2^{k+1}} \min\set{\delta_0, \dots, \delta_k, \e_0, \dots, \e_k}$ \label{prop:partial-baker-8}
    \end{enumerate}
    with the sequence $(\e_k)_{k\in\NN}$ from \Cref{claim:partial-baker}, and additionally requiring that
    \begin{enumerate}[label=(\roman*), start=9]
        \item $\norm*{h_k}_{S_k} < \frac{\e_k}{2}$.\label{prop:partial-baker-9}
    \end{enumerate}
    The original proof of \Cref{lemma:nowhere-bulging-2} stays almost unchanged.
    First, we start the construction with $h_0 \coloneqq c e^z$ instead of $\id_{\CC}$ with a constant $c > 0$ small enough that $\norm*{h_0}_{S_0} < \frac{\e_0}{2}$.
    Next, observe that $K \coloneqq H_{k+1} \cup \bigcup_{m = 0}^{k+1} \paren*{A_m^{k+1} \cup B_m^{k+1}} \cup S_{k+1}$, with $A_m^{k+1} $ and $ B_m^{k+1}$ as in \Cref{lemma:nowhere-bulging-2}, satisfies all conditions of Arakelyan's Theorem~\ref{thm:arakelyan}.
    Then, the above variant of \Cref{lemma:nowhere-bulging-2} is achieved by adapting the recursive construction of $h_{k+1}$ to approximate the function $\Phi \colon K \to \CC$,
    \begin{equation*}
        \Phi(z) \coloneqq
        \begin{cases}
            h_k(z) & \text{if } z \in H_{k+1}, \\
            -\frac{1}{g^{k + 1}(w_{k+1})} f(\zeta_{k+1, m}) & \text{if } z \in A_m^{k+1} \text{ for some } m, \\
            -\frac{1}{g^{k + 1}(w'_{k+1})} (f(\zeta'_{k+1, m}) - (k+1)) & \text{if } z \in B_m^{k+1}\text{ for some } m, \\
            0 & \text{if } z \in S_{k+1},
        \end{cases}
    \end{equation*}
    by an entire function up to an error of $\frac{1}{2^{k+2}}\min\set{\delta_0, \dots, \delta_{k+1}, \e_0, \dots, \e_{k+1}}$.
    The rest of the proof stays unchanged.
    The construction then yields an entire function $h$ such that $T_0 \times \set{0} \subset \J(F)$ where $F(z,w)\coloneqq(f(z) + wh(z), g(w))$.
    Furthermore, analogously to \eqref{eq:nowhere-bulging-9}, since $S_k \subset H_{k+1}$, it follows from property~\ref{prop:partial-baker-8} that
    \begin{equation*}
        \norm{h - h_k}_{S_k} \leq \sum_{j = k+1}^{\infty} 2^{-(j+1)} \min\set{\delta_0, \dots, \delta_j, \e_0, \dots, \e_j} \leq \sum_{j = k+1}^{\infty} 2^{-(j+1)} \e_k \leq \frac{\e_k}{2},
    \end{equation*}
    and together with \ref{prop:partial-baker-9}, it follows that
    \begin{equation}\label{eq:small-on-s}
        \norm{h}_{S_k} < \e_k
    \end{equation}
    for all $k\in\NN$.
    We show that $\bigcup_{k\in\NN} \interior{S_k} \times \set{0}\subset \F(F)$.
    Since $0$ is an attracting fixed point of $g$, there exist $0 < \alpha < 1$ and $0 < \delta_g < 1$ such that
    \begin{equation*}
        \abs{g(w)} \leq \alpha \abs{w}
    \end{equation*} 
    for all $w \in D(0,\delta_g)$.
    Set $A \coloneqq D(0,\delta_g)$. 
    Then $A$ is a $g$-invariant neighborhood of $0$ such that $g^k(w) \to 0$ as $k \to \infty$ uniformly for all $w \in A$.
    By \eqref{eq:small-on-s} and property~\ref{prop:partial-baker-6} from \Cref{claim:partial-baker}, it follows that for all $k\in\NN$ and for all $(z,w) \in S_k \times A$,
    \begin{equation*}
        \pi_z F(z,w) = f(z) + wh(z) \in f(S_k) + wD(0,\e_k) \subset f(S_k) + D(0,\e_k) \subset S_{k+1}. 
    \end{equation*}
    Since $\min_{s\in S_k}\re(s) \to \infty$ as $k \to \infty$, points in $S_k \times A$ diverge uniformly to $\infty$ in the $z$-coordinate and converge uniformly to $0$ in the $w$-coordinate. 
    We conclude that $\interior{S_k} \times \set{0} \subset \F(F)$.
\end{proof}

\section{Tame Perturbations}\label{sec:tame-perturbation}

In this section, we construct skew products with simply connected escaping wandering domains that are non-bulging despite the fiber dynamics only being perturbed by a comparatively tame perturbation. 
A natural example to keep in mind is a perturbation depending only on $w$.

We show that given a perturbation and some geometric restrictions, there exists a fiber map with an escaping wandering domain that satisfies these geometric restrictions and that is non-bulging, except in the trivial cases $r \equiv 0$ or $g$ constant.

We briefly recall that a \emph{Jordan curve} is a continuous map $\gamma\colon [0,1] \to \CC$ with $\gamma(0) = \gamma(1)$ that is injective on $[0,1)$. 
By the Jordan Curve Theorem, $\CC \setminus \gamma([0,1])$ has exactly two connected components, one bounded and one unbounded.
A \emph{Jordan domain} is a domain in $\CC$ that arises as the bounded component for some Jordan curve $\gamma$.
We can now state the theorem.

\begin{theorem}\label{thm:tame-perturbation}
	Let $(V_n)_{n\in\NN}$ and $(W_n)_{n\in\NN}$ be sequences of Jordan domains in $\CC$ such that 
	\begin{itemize}
		\item the sets $\overline{W_n}$ are pairwise disjoint,
		\item $\overline{V_n} \subset W_n$, and
		\item $\min \abs{\overline{W_n}} \to \infty$ as $n\to\infty$.
	\end{itemize}
	Let $r \colon \CC^2 \to \CC$ be a nonconstant holomorphic function that vanishes on $\set{w=0}$ and let $g$ be a nonconstant entire function with an attracting fixed point at $0$.
	Then there exists an entire function $f$ with a wandering domain $U$ such that the Fatou components $U_n$ containing $f^n(U)$ satisfy $\overline{V_n} \subset U_n \subset W_n$ for all $n\in\NN$ and such that the wandering domain $U$ is non-bulging for the skew product $F(z,w) = (f(z) + r(z,w), g(w))$.
\end{theorem}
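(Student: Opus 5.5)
The plan is to build $f$ with the Runge-type construction of \cite{Benini2022}, which produces entire functions with escaping simply connected wandering domains $U_n$ satisfying $\overline{V_n}\subset U_n\subset W_n$. The key point is that this construction leaves enough freedom to prescribe $f$ approximately on additional small disjoint compact sets. We use that freedom to force, for points $(z,w)$ with $z\in U$ and $w$ arbitrarily small, orbits that return to a bounded region. Non-bulging then follows exactly as in the proof of \Cref{thm:partial-wandering}. Since the orbit of a point $z_0\in U$ escapes, any limit function on a neighbourhood of $(z_0,0)$ in the Fatou set would be $\infty$. This contradicts the existence of points $(z_0,w_n)\to(z_0,0)$ with $\norm{F^{N_n}(z_0,w_n)}$ bounded.

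\emph{Step 1: using the perturbation.} Since $r$ is nonconstant and vanishes on $\set{w=0}$, we can write $r(z,w)=wh(z,w)$ with $h\not\equiv 0$. Pick $(z^\ast,w^\ast)$ near which $r$ is nonzero. The idea is that for $w_n=g^n(w)$ tiny, $r(\zeta,w_n)$ is tiny but nonzero at suitable $\zeta$. Because we design $f$ ourselves, we can place the orbit wherever this nonzero perturbation is visible, and then amplify it by a huge derivative of $f$. Concretely, by the attracting dynamics the function $w\mapsto r(\zeta,g^n(w))$ is nonconstant near $0$ for generic $\zeta$. Fix the base point $z_0\in V_0$ and its orbit $f^n(z_0)\in V_n$, whose approximate positions are prescribed by the construction. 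At stage $n$, consider the holomorphic map $w\mapsto \pi_zF^{n}(z_0,w)$. It equals $f^n(z_0)$ at $w=0$ and is nonconstant for a generic choice at each stage, which is the analogue of the perturbation step in \Cref{lemma:nowhere-bulging-2}. Its image of a small disk $D(0,\e_n)$ is therefore a small open neighbourhood $N_n$ of $f^n(z_0)$.

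\emph{Step 2: inductive Runge construction.} We construct $f=\lim f_k$ inductively as in \cite{Benini2022}, interleaving an extra requirement. At stage $n$, we add a tiny closed disk $E_n\subset N_n\setminus \overline{V_n}$ that lies outside the future Fatou component, i.e.\ in $W_n\setminus U_n$ or in a small channel outside $W_n$. On $E_n$ we prescribe $f$ to be approximately a constant close to $0$, or to send $E_n$ into a fixed bounded set. This must be compatible with the Runge hypotheses of \Cref{thm:runge-1} (disjoint compacts with connected complements escaping to infinity); placing $E_n$ in a thin slit connecting to the outside of $W_n$ achieves this. Then some $w_n\in D(0,\e_n)$ satisfies $\pi_zF^n(z_0,w_n)\in E_n$, hence $\abs{\pi_zF^{n+1}(z_0,w_n)}$ is bounded. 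The stability of all previous conditions is ensured by the error control as in \Cref{lemma:nowhere-bulging-2}\ref{prop:nowhere-bulging-lemma-2-6}--\ref{prop:nowhere-bulging-lemma-2-7}. Choosing $E_n$ so that the map $w\mapsto\pi_zF^n(z_0,w)$ actually hits $E_n$ uses the open mapping theorem and a preimage-stability argument akin to the continuity-of-zeros argument used in the nowhere-bulging lemma.

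\emph{Main obstacle.} The hard step is reconciling the extra prescriptions with the delicate estimates of \cite{Benini2022}. Those estimates guarantee that $U_n$ are genuine Fatou components, simply connected, and squeezed between $V_n$ and $W_n$. The neighbourhood $N_n$ shrinks with $n$, since $g^n(w)\to0$ and $r$ is small there. So $E_n$ must be extremely close to $\overline{V_n}$ while remaining outside $U_n$, which requires $\partial U_n$ to pass between them. I would first try to choose $E_n$ inside $W_n$ near $\partial V_n$ and to modify the target approximating function of \cite{Benini2022} so that $f$ is close to the prescribed function on $\overline{V_n}$ and maps $E_n$ far away (towards a fixed bounded attracting region). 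The hyperbolic-distance estimates of \cite{Benini2022} then need to be checked to still yield $U_n\cap E_n=\emptyset$.
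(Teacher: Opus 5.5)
\textbf{There is a genuine gap in Step~2.} The step you flag as the main obstacle is the heart of the theorem, and as written Step~2 does not go through. You place a disk $E_n\subset N_n\setminus\overline{V_n}$ outside the future component $U_n$, but nothing guarantees that such a disk exists. $N_n$ is a tiny neighbourhood of $f^n(z_0)\in V_n\subset U_n$. So $N_n\setminus\overline{U_n}$ is empty unless the orbit point lies closer to $\partial U_n$ than the size of $N_n$. Your construction neither steers the orbit there nor controls where $\partial U_n$ lies.

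The proposed amplification by a huge derivative of $f$ cannot replace this. Since $f(U_n)\subset U_{n+1}$, the perturbed orbit can first leave the wandering domain only at a step where $\abs{r(z,g^n(w))}\geq\dist(f(z),\partial U_{n+1})$, with $z=\pi_zF^n(z_0,w)\in U_n$. That is, the perturbation itself must beat the Euclidean distance to the boundary. Deferring this to a re-check of the hyperbolic estimates of \cite{Benini2022} leaves the core of the proof undone.

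\textbf{The missing idea.} Let the perturbation dictate the geometry, using that only $\overline{V_n}\subset U_n\subset W_n$ is required, and fix all data before constructing $f$. The paper does this as follows.
\begin{enumerate}
\item It writes $r=w^dh$ with $h(\cdot,0)\not\equiv0$ and picks $\zeta_n\in V_n$ with $h(\zeta_n,0)\neq0$. Then for small $w$ the perturbation $g^n(w_n)^dh(z,g^n(w_n))$ points in a controlled direction: it lies in $\mathbb{H}(v/2,v)$, where $v=g^n(w_n)^dh(\zeta_n,0)$.
\item Given $w_n$, it chooses $\zeta_{n+1}$ within $\abs{v}/4$ of an outermost point of $\partial V_{n+1}$ in direction $v$ (\Cref{lemma:outside-jordan}). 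It then takes later $w_j$ so small that earlier perturbations stay below $\rho_{k+1}$ (\Cref{lemma:tame-perturbation-2}). Consequently the orbit of $(\zeta_0,w_n)$ tracks the $\zeta_k$ up to time $n$. At time $n+1$ it is pushed into a compact set at distance at least $\delta_{n+1}>0$ outside $\overline{V_{n+1}}$.
\item Only afterwards does it choose the curves for \Cref{thm:theorem-D}: $\gamma_n=\partial V_n$, and $\Gamma_n$ and $\eta_n\supset L_n$ within $\delta_n$ of $\partial V_n$. The conclusion $U_n\subset\operatorname{int}\Gamma_n$ of \Cref{thm:theorem-D} then separates $U_n$ from the pushed sets $K_n\subset\operatorname{ext}\eta_n$ automatically, with no new hyperbolic estimates.
\item A single Nersesjan approximation finishes the construction. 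It prescribes $f\approx\Phi_n$ on $\overline{W_n'}$ (where $\Phi_n(\overline{V_n})\subset D(\zeta_{n+1},\rho_{n+1})$), $f\approx\xi$ on $\overline{D}\cup L_n$, and $f\approx 0$ on $K_n$. This yields $\pi_zF^{n+2}(\zeta_0,w_n)\in\DD$.
\end{enumerate}
Because all data are fixed in advance, no inductive stability, open-mapping, or preimage-hitting argument is needed.
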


Recall the following definition.
\begin{definition}
	We say that a curve $\sigma$ in $\CC$ \emph{surrounds a set $B\subset \CC$} if $B$ is contained in a bounded component of the complement of $\sigma$. For a Jordan curve $\gamma$ we denote by $\operatorname{int} \gamma$ the bounded component of $\CC \setminus \gamma$ and by $\operatorname{ext} \gamma$ the unbounded component.
\end{definition}

We will also need the following notation.

\begin{definition}
    Given $\e > 0$ and curves $\gamma\colon[0,1]\to\CC$ and $\eta\colon[0,1]\to\CC$, we say that $\gamma$ and $\eta$ are \emph{$\e$-close to each other}, written $\gamma \sim_\e \eta$, if their supports are $\e$-close to each other, i.e., $\gamma([0,1]) \subset \eta([0,1]) + D(0,\e)$ and $\eta([0,1]) \subset \gamma([0,1]) + D(0,\e)$.
\end{definition}

The proof of \Cref{thm:tame-perturbation} hinges on the following part of \cite[Theorem~D]{Benini2022}.

\begin{theorem}[{\cite[Theorem~D]{Benini2022}}]\label{thm:theorem-D}
    Let $f$ be a transcendental entire function and suppose that there exist Jordan curves $\gamma_n$ and $\Gamma_n$, $n\ge 0$, a bounded domain $D$, a subsequence $n_k\to\infty$ and  compact sets $L_k$ (associated with $\Gamma_{n_k}$) such that
    \begin{enumerate}[label=(\alph*)]
        \item $\Gamma_n$ surrounds $\gamma_n$, for $n \geq 0$, \label{prop:theorem-D-1}
        \item  for every $k,n, m\geq0$, $m\neq n$ the sets $L_k, \overline{D}, \Gamma_m$  are in $\operatorname{ext}\Gamma_n$, \label{prop:theorem-D-2}
        \item  $\gamma_{n+1}$ surrounds $f(\gamma_n)$, for $n \geq 0$, \label{prop:theorem-D-3}
        \item $f(\Gamma_n)$ surrounds $\Gamma_{n+1}$, for $n \geq 0$, \label{prop:theorem-D-4}
        \item $f(\overline D \cup \bigcup_{k\ge 0} L_k)\subset D$, and \label{prop:theorem-D-5}
        \item$ \max\{\operatorname{dist}(z,L_{k})\mid z \in \Gamma_{n_k}\} = o(\operatorname{dist}(\gamma_{n_k}, \Gamma_{n_k}))\;\text{ as}\; k \to \infty$. \label{prop:theorem-D-6}
    \end{enumerate}
    Then there exists an orbit of simply connected wandering domains $U_n$
    such that $\overline{\operatorname{int} \gamma_n} \subset U_n \subset \operatorname{int}\Gamma_n$, for $n \geq 0$.
\end{theorem}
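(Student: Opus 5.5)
Write $\Omega_n \coloneqq \operatorname{int}\Gamma_n$. The plan is to prove three things in turn: that each $\overline{\operatorname{int}\gamma_n}$ lies in the Fatou set, with $f^n(\operatorname{int}\gamma_0)\subset \operatorname{int}\gamma_n$; that the Fatou component $U_n$ containing $\operatorname{int}\gamma_n$ stays inside $\Omega_n$; and that the resulting components are pairwise distinct, simply connected wandering domains. The first and last steps are routine. The second is where conditions \ref{prop:theorem-D-5} and \ref{prop:theorem-D-6} enter, through a hyperbolic-metric estimate.

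\emph{Step 1 (the inner curves lie in the Fatou set).} By \ref{prop:theorem-D-3} and the maximum principle applied to the bounded domain $\operatorname{int}\gamma_n$, we get $f(\overline{\operatorname{int}\gamma_n})\subset \operatorname{int}\gamma_{n+1}$. Inductively, $f^j(\overline{\operatorname{int}\gamma_n})\subset \operatorname{int}\gamma_{n+j}\subset \Omega_{n+j}$. By \ref{prop:theorem-D-2}, the bounded domain $D$ lies in $\operatorname{ext}\Gamma_m$ for every $m$, so all iterates omit the open set $D$ on a neighbourhood of $\overline{\operatorname{int}\gamma_n}$. Montel's theorem then gives $\overline{\operatorname{int}\gamma_n}\subset\F(f)$. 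Let $U_n$ be the Fatou component containing it; then $f(U_n)\subset U_{n+1}$.

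On the other side, \ref{prop:theorem-D-5} shows that $f(\overline D)\subset D$. Hence $D$ lies in the Fatou set, inside an attracting basin, by the Denjoy--Wolff or Schwarz lemma argument on a compactly invariant domain. Moreover, every point of $\bigcup_k L_k$ is mapped into this basin.

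Since the $\Omega_m$ are mutually exterior by \ref{prop:theorem-D-2}, the orbit $f^j(\operatorname{int}\gamma_0)$ visits pairwise disjoint bounded sets avoiding $D$. In particular it never enters the basin of $D$. Consequently $U_n$ is disjoint from the basin, and therefore from every $L_k$.

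\emph{Step 2 (the components stay inside $\Omega_n$; main obstacle).} Suppose $U_0\not\subset\Omega_0$. Then there is a path $\beta_0\subset U_0$ from a point of $\gamma_0$ to a point of $\Gamma_0$. Using \ref{prop:theorem-D-3} and \ref{prop:theorem-D-4}, the image $f(\beta_0)$ starts inside $\gamma_1$ and ends on $f(\Gamma_0)$, which lies outside $\Omega_1$ since $f(\Gamma_0)$ surrounds $\Gamma_1$. Hence $f(\beta_0)$ contains a subpath $\beta_1\subset U_1$ joining $\gamma_1$ to $\Gamma_1$. Inductively we obtain subpaths $\beta_n\subset f^n(\beta_0)\subset U_n$ joining $\gamma_n$ to $\Gamma_n$.

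By Schwarz--Pick for $f^n\colon U_0\to U_n$, the hyperbolic length $\ell_{U_n}(\beta_n)$ is bounded by $\ell_{U_0}(\beta_0)<\infty$, uniformly in $n$.

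Now take $n=n_k$ and use the standard estimate $\rho_{U}(z)\ge 1/(2\dist(z,\partial U))$ for simply connected $U$. If $U$ is not known to be simply connected, use instead an estimate relative to the complement, which contains $L_k$ and points outside $\Omega_{n_k}$. Since $L_k\cap U_{n_k}=\emptyset$, condition \ref{prop:theorem-D-6} says that near $\Gamma_{n_k}$ the boundary of $U_{n_k}$ is at distance $o(\dist(\gamma_{n_k},\Gamma_{n_k}))$. Integrating along $\beta_{n_k}$ as it travels from $\gamma_{n_k}$ to $\Gamma_{n_k}$ should give a lower bound of the form $c\log\bigl(\dist(\gamma_{n_k},\Gamma_{n_k})/\max_{z\in\Gamma_{n_k}}\dist(z,L_k)\bigr)\to\infty$, which contradicts the uniform bound.

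The delicate part is making this lower bound rigorous. One has to control the complement of $U_{n_k}$ along the whole path, not just near its endpoint, presumably by comparing with the domain $\CC\setminus(L_k\cup\operatorname{ext}\Gamma_{n_k})$ and using a Koebe/Beardon--Pommerenke type inequality. The same argument applied to $U_n$ for each $n$ gives $U_n\subset\Omega_n$.

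\emph{Step 3 (wandering and simply connected).} Since the $\Omega_n$ are pairwise disjoint, the components $U_n\subset\Omega_n$ are pairwise distinct, so $U_0$ is wandering. Each $U_n$ is bounded. A multiply connected wandering domain would have iterates tending to infinity with $f^n(U)$ eventually surrounding $0$, and hence $D$, which the containment $U_n\subset\Omega_n$ with $\overline D\subset\operatorname{ext}\Gamma_n$ rules out. Alternatively, filling holes of $U_n$ inside the bounded set $\Omega_n$ and applying the maximum principle keeps it in the Fatou set. Either way the $U_n$ are simply connected, and $\overline{\operatorname{int}\gamma_n}\subset U_n\subset\operatorname{int}\Gamma_n$.
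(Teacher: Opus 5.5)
The paper gives no proof of this statement; it is quoted from Benini et al., so your proposal has to be judged on its own. Steps~1 and~3 are sound:
\begin{itemize}
\item Montel on a neighbourhood of $\overline{\operatorname{int}\gamma_n}$ inside $\operatorname{int}\Gamma_n$;
\item the disjointness of $U_n$ from every $L_k$, once one notes that the attracting fixed point lies in $f(\overline D)\subset D$, so every orbit in its basin enters $D$;
\item the subarcs $\beta_n$ obtained from (c) and (d), together with the Schwarz--Pick bound on $\ell_{U_n}(\beta_n)$;
\item filling holes of $U_n\subset\operatorname{int}\Gamma_n$ via the maximum principle.
\end{itemize}

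The gap is Step~2, which is the only place where (f) is used. You assert that integrating along $\beta_{n_k}$ \emph{should} give a bound of order $\log(\delta_k/\varepsilon_k)$, where $\delta_k\coloneqq\dist(\gamma_{n_k},\Gamma_{n_k})$ and $\varepsilon_k\coloneqq\max_{z\in\Gamma_{n_k}}\dist(z,L_k)$. The route you indicate does not deliver this, for three reasons.
\begin{itemize}
\item The estimate $\rho_U\ge 1/(2\dist(z,\partial U))$ needs simple connectivity. You only obtain that in Step~3, from $U_n\subset\operatorname{int}\Gamma_n$, so using it here is circular.
\item The set $\CC\setminus(L_k\cup\operatorname{ext}\Gamma_{n_k})$ is just the closed set $\overline{\operatorname{int}\Gamma_{n_k}}$. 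It does not contain the open set $U_{n_k}$, which under your hypothesis meets $\Gamma_{n_k}$. The only comparison available is $U_{n_k}\subset\CC\setminus L_k$.
\item For that domain, knowing only that the endpoint $b$ of $\beta_{n_k}$ is $o(\delta_k)$-close to $L_k$ is not enough. Suppose that near $b$ the set $L_k$ were a single point $p$, with the rest of $L_k$ at distance $\ge s$ from $p$. Then $\CC\setminus L_k\supset D(p,s)\setminus\{p\}$. The radial segment from $\abs{z-p}=\delta_k$ to $\abs{z-p}=\varepsilon_k$ then has hyperbolic length at most $\log\bigl(\log(s/\varepsilon_k)/\log(s/\delta_k)\bigr)$. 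This stays bounded when $\log(s/\delta_k)$ is large compared with $\log(\delta_k/\varepsilon_k)$.
\end{itemize}

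The missing idea is that (f) controls \emph{all} of $\Gamma_{n_k}$. Moreover, $\Gamma_{n_k}$ is a continuum through $b$ of diameter at least $2\delta_k$, because it surrounds $\gamma_{n_k}$. Hence it meets every circle $\partial D(b,s)$ with $0<s\le\delta_k$.

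Let $L_k'\coloneqq L_k\cap\bigl(\Gamma_{n_k}+\overline{D(0,\varepsilon_k)}\bigr)$. Passing to $L_k'$ only enlarges the comparison domain. Around each of its points, $L_k'$ has points at every distance $s\in[0,\delta_k]$ up to an error $2\varepsilon_k$. This keeps the Beardon--Pommerenke quantity of $\CC\setminus L_k'$ bounded by an absolute constant at points whose distance $d$ to $L_k'$ satisfies $2\varepsilon_k\le d\le\delta_k/2$. When $d<2\varepsilon_k$ it is at most $\log(6\varepsilon_k/d)$.

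The Beardon--Pommerenke inequality then yields $\rho_{U_{n_k}}(z)\ge\rho_{\CC\setminus L_k'}(z)\ge c/\abs{z-b}$ whenever $C\varepsilon_k\le\abs{z-b}\le\delta_k/C$, with absolute constants $c,C>0$. Since $\beta_{n_k}$ starts on $\gamma_{n_k}$, at distance at least $\delta_k$ from $b$, it must cross this annulus. Therefore $\ell_{U_{n_k}}(\beta_{n_k})\ge c\log\bigl(\delta_k/(C^2\varepsilon_k)\bigr)\to\infty$, contradicting your Schwarz--Pick bound. Only the final part of the path near $b$ has to be controlled, not the whole path.

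Without an argument of this kind, the inclusion $U_n\subset\operatorname{int}\Gamma_n$ is not established, and your Step~3 rests on it.
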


We also need the following lemma ensuring the existence of outermost points of a Jordan domain in a given direction.

\begin{lemma}\label{lemma:outside-jordan}
	Let $\Omega \subset \CC$ be a Jordan domain and $v \in \CC\setminus\set{0}$. 
    Then there exists $x \in \partial\Omega$ with
	\begin{equation*}
		\overline{\Omega} \cap \mathbb{H}(x, v) = \emptyset
	\end{equation*}
	where
	\begin{equation*}
		\mathbb{H}(x, v) \coloneqq \set{ x + sv + tv^{\perp } \mid s > 0 \text{ and } t \in \RR}
	\end{equation*}
	for any vector $v^{\perp} \in \CC\setminus\set{0}$ perpendicular to $v$.
\end{lemma}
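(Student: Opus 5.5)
We maximize a linear functional over the compact set $\overline{\Omega}$. Concretely, consider $\lambda \colon \CC \to \RR$, $\lambda(z) \coloneqq \re\paren{z\overline{v}}$, which is the (scaled) orthogonal projection onto the direction $v$. Since $\Omega$ is a Jordan domain, $\overline{\Omega}$ is compact, so the continuous function $\lambda$ attains its maximum on $\overline{\Omega}$ at some point $x \in \overline{\Omega}$.

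Next we check that this $x$ lies on the boundary. If $x \in \Omega$, then because $\Omega$ is open, $x + \e v \in \Omega$ for small $\e > 0$. But $\lambda(x+\e v) = \lambda(x) + \e\abs{v}^2 > \lambda(x)$, which contradicts maximality. Hence $x \in \overline{\Omega}\setminus\Omega = \partial\Omega$; for a Jordan domain this boundary is exactly the Jordan curve.

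Finally, we identify the half-plane. Any $v^{\perp}$ perpendicular to $v$ satisfies $\re(v^{\perp}\overline{v}) = 0$. So for $y = x + sv + tv^{\perp}$ with $s>0$ and $t\in\RR$ we get $\lambda(y) = \lambda(x) + s\abs{v}^2 > \lambda(x)$. Therefore $\mathbb{H}(x,v) = \set{y \mid \lambda(y) > \lambda(x)}$, independently of the choice of $v^{\perp}$. Maximality of $\lambda(x)$ on $\overline{\Omega}$ then gives $\overline{\Omega}\cap\mathbb{H}(x,v) = \emptyset$.

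No step is a real obstacle. The only points needing care are the compactness of $\overline{\Omega}$, which holds because $\Omega$ is bounded, and the observation that the description of $\mathbb{H}(x,v)$ does not depend on $v^{\perp}$, since $\re(tv^{\perp}\overline{v}) = 0$ for real $t$.
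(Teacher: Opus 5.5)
Your proof is correct and is essentially the paper's argument. The paper maximizes the coordinate $s$ in the decomposition $z = sv + tv^{\perp}$ over the compact set $\overline{\Omega}$ (via the lines $L_s$), which is your functional $\re(z\overline{v})$ up to the factor $\abs{v}^2$, and you additionally make explicit why the maximizer lies on $\partial\Omega$ and why $\mathbb{H}(x,v)$ misses $\overline{\Omega}$, which the paper leaves implicit.
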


\begin{proof}
	Consider the lines
	\begin{equation*}
		L_s \coloneqq \set{ sv + tv^{\perp}  \mid t \in \RR }
	\end{equation*}
	for $s \in \RR$. Since $\Omega$ is nonempty, the set of intersections
	\begin{equation*}
		I \coloneqq \set{ s \in \RR \mid L_s \cap \overline{\Omega} \neq \emptyset }
	\end{equation*}
	is nonempty. 
    The set $I$, being the projection of the compact set $\overline\Omega$ onto the $v$-direction, is compact.
    Let $s \coloneqq \max I$. 
    Any $x \in L_s \cap \overline{\Omega}$ lies in $\partial \Omega$ and suffices for the statement.
\end{proof}

\begin{proof}[Proof of Theorem~{\ref{thm:tame-perturbation}}]
	This proof is a variant of the construction in \cite[Theorem 5.3]{Benini2022} and, in particular, parts proceed similarly.
    The idea is to construct wandering domains of a prescribed shape that simultaneously contain points near their boundary which, upon adding the given perturbation, are pushed into a region that $f$ maps close to the origin.

	First, let $\delta_g > 0$ be such that $\abs{g(w)} < \abs{w}$ and $g(w) \neq 0$ for all $w \in D(0,\delta_g)\setminus\set{0}$.
    Furthermore, we can write $r(z,w) = w^d h(z,w)$, $d\geq 1$, where $h$ is a holomorphic function that does not vanish identically on $\set{w=0}$.

	The key step of the proof is accomplished in the following lemma.

	\begin{lemma}\label{lemma:tame-perturbation-1}
		There exist an entire function $f$ with a wandering domain $U$, a sequence $(\zeta_n)_{n\in\NN}$ of complex numbers, a sequence $(\rho_n)_{n\in\NN}$ of positive real numbers, a sequence $(w_n)_{n\in\NN}$ of nonzero complex numbers converging to $0$, and a sequence of compact simply connected sets $(K_n)_{n \geq 1}$ such that for all $n \geq 0$,
		\begin{enumerate}[label=(\arabic*)]
			\item $\abs{w_n} < \delta_g$, \label{prop:tame-perturbation-lemma-1-1}
			\item the Fatou component $U_n$ of $f$ containing $f^n(U)$ satisfies $\overline{V_n} \subset U_n \subset W_n$, \label{prop:tame-perturbation-lemma-1-4}
			\item $f(z) + g^{n}(w_n)^d h(z, g^{n}(w_n)) \in K_{n+1}$ for all $ z \in \overline{D(\zeta_{n}, 3\rho_n)} $, \label{prop:tame-perturbation-lemma-1-5}
			\item $f(z) + g^{n}(w_j)^d h(z, g^{n}(w_j)) \in \overline{D(\zeta_{n+1}, 3\rho_{n+1})}$ for all $z \in \overline{D(\zeta_n, 3\rho_n)}$ and $j >  n$, \label{prop:tame-perturbation-lemma-1-6}
			\item $f(z) + g^{n+1}(w_n)^d h(z,g^{n+1}(w_n)) \in \DD $ for all $z \in K_{n+1}$, and \label{prop:tame-perturbation-lemma-1-7}
			\item $\zeta_n \in V_n$. \label{prop:tame-perturbation-lemma-1-extra}
		\end{enumerate}
	\end{lemma}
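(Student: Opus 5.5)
The plan is to build $f$ as a limit of entire functions, following the scheme of \cite[Theorem~5.3]{Benini2022}, and to obtain the wandering domain from \Cref{thm:theorem-D}. Properties~\ref{prop:tame-perturbation-lemma-1-5}--\ref{prop:tame-perturbation-lemma-1-7} are imposed along the way as extra approximation targets on small disks and compact sets that lie away from the sets where $f$ is prescribed to behave like a contraction of the $W_n$.

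\textbf{Geometric data.} Choose Jordan curves $\gamma_n, \Gamma_n$ with $\overline{V_n} \subset \operatorname{int}\gamma_n$, $\Gamma_n$ surrounding $\gamma_n$, and $\Gamma_n \subset W_n$. Choose a bounded domain $D \ni 0$ with $\overline{D}$ disjoint from all $\overline{W_n}$; by translating the problem we may assume $0$ lies outside all $\overline{W_n}$. Choose points $\zeta_n \in V_n$ and radii $\rho_n$ with $\overline{D(\zeta_n,3\rho_n)} \subset \operatorname{int}\gamma_n$, which gives \ref{prop:tame-perturbation-lemma-1-extra}.

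For each $n$, use \Cref{lemma:outside-jordan} to find an outermost point $x_n$ of $W_n$ in some fixed direction. Just beyond $x_n$, outside $\overline{W_n}$ but close to $\Gamma_n$, place a small closed disk $K_{n+1}$, and place the compact sets $L_k$ of \Cref{thm:theorem-D} near $\Gamma_{n_k}$. These $K_{n+1}$ are the regions that will be mapped into $\DD$. The outermost-point lemma ensures that $\CC$ minus the union of all these sets stays connected, and that $K_{n+1}$ sits in $\operatorname{ext}\Gamma_m$ for all $m$, as condition~\ref{prop:theorem-D-2} requires.

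\textbf{Inductive construction.} Construct $f = \lim f_k$ by Runge's \Cref{thm:runge-1}, or by Arakelyan's \Cref{thm:arakelyan} if unbounded pieces are needed, on pairwise disjoint compact pieces. On $\overline{\operatorname{int}\Gamma_n}$ minus a small neighborhood of $\overline{D(\zeta_n,3\rho_n)}$, take $f$ close to a map sending $\gamma_n$ inside $\gamma_{n+1}$ and $\Gamma_n$ around $\Gamma_{n+1}$, giving \ref{prop:theorem-D-3} and \ref{prop:theorem-D-4}. On $\overline D \cup \bigcup L_k \cup \bigcup K_{n+1}$, take $f$ close to the constant $0$, giving \ref{prop:theorem-D-5}.

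The perturbation is handled as follows. Choose $w_n$ small with $\abs{w_n}<\delta_g$; then $g^{n+1}(w_n)$ is small and $h$ is bounded on $K_{n+1}\times\overline{D(0,\delta_g)}$. Hence $f + g^{n+1}(w_n)^d h(\cdot,g^{n+1}(w_n))$ lies in $\DD$ on $K_{n+1}$, which is \ref{prop:tame-perturbation-lemma-1-7}.

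For \ref{prop:tame-perturbation-lemma-1-5}, approximate on $\overline{D(\zeta_n,3\rho_n)}$ the function $z \mapsto c_n - g^n(w_n)^d h(z,g^n(w_n))$, where $c_n$ is the center of $K_{n+1}$. This is holomorphic in $z$. Since $h(\cdot,0)\not\equiv 0$ and $w_n$ is chosen small but nonzero, the map $z\mapsto f(z)+g^n(w)^dh(z,g^n(w))$ differs significantly between $w=w_n$ and $w=w_j$ ($j>n$) only if $f$ itself moves a lot. So the real mechanism is different: make $f$ on $\overline{D(\zeta_n,3\rho_n)}$ of very large derivative in a controlled way, so that the tiny difference $g^n(w_n)^d h - g^n(w_j)^d h$ is amplified over later steps.

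\textbf{Main obstacle.} The hardest step is reconciling \ref{prop:tame-perturbation-lemma-1-5} with \ref{prop:tame-perturbation-lemma-1-6} and with $\zeta_n\in V_n\subset U_n$, since these disks also lie in the wandering domain. The resolution I would try is to pick $w_n$ sequentially, each much smaller than the previous ones, and to use the freedom of choosing the target of $f$ on $\overline{D(\zeta_n,3\rho_n)}$ at stage $n$.

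Concretely, let $f$ map $\overline{D(\zeta_n,3\rho_n)}$ close to $\overline{D(\zeta_{n+1},\rho_{n+1})}$, which gives \ref{prop:tame-perturbation-lemma-1-6} once all later $w_j$ are tiny. The orbit of $(\zeta_0,w_n)$ then follows the disks up to time $n$. At time $n$ the perturbation $w\mapsto g^n(w)^dh$ is still the same small quantity, so instead the radius $\rho_{n+1}$ is chosen at stage $n$, after $w_n$ is fixed, at a scale comparable to $\abs{g^n(w_n)^d h}$. Then the $w_n$-perturbed point lands in a thin set, namely the annular part of $\overline{D(\zeta_{n+1},3\rho_{n+1})}\setminus\overline{D(\zeta_{n+1},\rho_{n+1})}$. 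On that set $f$ is prescribed to take values in $K_{n+2}$, with an index shift. Points of $V_n$ near $\zeta_n$ still stay in $U_n$, because $\overline{D(\zeta_{n+1},\rho_{n+1})}$ is mapped correctly.

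Compatibility of these prescriptions with condition~\ref{prop:theorem-D-6}, and with keeping all approximation sets disjoint with connected complement, is where I expect the real work. Convergence of $f_k$ and stability of all open conditions follow as in the proof of \Cref{lemma:nowhere-bulging-2}, via error sequences $2^{-k-1}\min_j \delta_j$.
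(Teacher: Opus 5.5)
There is a genuine gap. The mechanism behind \ref{prop:tame-perturbation-lemma-1-5} and \ref{prop:tame-perturbation-lemma-1-7}, which is the real content of the lemma, is never established, and the resolution you settle on cannot work.

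You let the $w_n$-shifted image of $\overline{D(\zeta_n,3\rho_n)}$ land in $A_{n+1}\coloneqq\overline{D(\zeta_{n+1},3\rho_{n+1})}\setminus\overline{D(\zeta_{n+1},\rho_{n+1})}$. You then prescribe $f(A_{n+1})\subset K_{n+2}$, with $K_{n+2}$ sent into $\DD$. But in your set-up $A_{n+1}\subset\operatorname{int}\gamma_{n+1}\subset U_{n+1}$, so $f(A_{n+1})\subset U_{n+2}$. Hence $K_{n+2}$ would meet $U_{n+2}$, where $f$ takes values in $U_{n+3}\subset W_{n+3}$, which is far from $\DD$ for large $n$. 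Moreover, letting $j\to\infty$ in \ref{prop:tame-perturbation-lemma-1-6} at index $n+1$ already forces $f(\overline{D(\zeta_{n+1},3\rho_{n+1})})\subset\overline{D(\zeta_{n+2},3\rho_{n+2})}$, annulus included.

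Two further obstructions:
\begin{itemize}
\item Treating $A_{n+1}$ and the inner disk differently is impossible anyway. By the argument principle, $f(\overline{D(\zeta_{n+1},\rho_{n+1})})$ lies in the filled hull of $f(A_{n+1})$.
\item Deleting a neighborhood of $\overline{D(\zeta_n,3\rho_n)}$ from $\overline{\operatorname{int}\Gamma_n}$ creates a bounded complementary component, so neither Runge's nor Arakelyan's theorem applies to that approximation set.
\end{itemize}
The index shift would also not give \ref{prop:tame-perturbation-lemma-1-5} as stated. Your first placement of $K_{n+1}$, beyond an outermost point of $W_n$ in a fixed direction, fails for the same underlying reason. $K_{n+1}$ must be reached from $f(\overline{D(\zeta_n,3\rho_n)})\subset V_{n+1}$ by the shift $g^n(w_n)^dh(\cdot,g^n(w_n))$. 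That shift is a small vector pointing roughly along $g^n(w_n)^dh(\zeta_n,0)$. With $\zeta_{n+1}$ and the curves fixed in advance, nothing lets it leave the future wandering domain $U_{n+1}$.

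The missing idea is to make the shift itself exit the wandering domain, which forces a specific order of choices. In the paper the steps are as follows.
\begin{itemize}
\item Fix $w_n$ first.
\item Apply \Cref{lemma:outside-jordan} to $V_{n+1}$ in the direction $v\coloneqq g^n(w_n)^dh(\zeta_n,0)$. Take $\zeta_{n+1}\in V_{n+1}$ within $\abs{v}/4$ of the resulting outermost point $x$.
\item Take $\rho_{n+1}$ very small. On $\overline{D(\zeta_n,3\rho_n)}\times\overline{D(0,\abs{g^n(w_n)})}$, $h$ takes values in the half-plane $\mathbb{H}(h(\zeta_n,0)/2,h(\zeta_n,0))$. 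Hence the shifted set $\overline{D(\zeta_{n+1},3\rho_{n+1})}+g^n(w_n)^dh(\overline{D(\zeta_n,3\rho_n)},g^n(w_n))$ lies in $\mathbb{H}(x,v)$, beyond $\overline{V_{n+1}}$, yet still in $W_{n+1}$.
\item Choose all later $w_j$ so small that their shifts at time $n$ are below $\rho_{n+1}$. Together with $f(\overline{V_n})\subset\overline{D(\zeta_{n+1},2\rho_{n+1})}$, this gives \ref{prop:tame-perturbation-lemma-1-6}.
\item Only after all $\zeta_n,\rho_n,w_n$ are fixed, choose the curves. Set $\gamma_{n+1}=\partial V_{n+1}$, and put $\Gamma_{n+1}$, $\eta_{n+1}$ (hence $L_{n+1}$) within $\delta_{n+1}$ of $\partial V_{n+1}$, where $\delta_{n+1}$ is the distance from the shifted set to $\partial V_{n+1}$. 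The half-plane then shows that the shifted set lies in $\operatorname{ext}\eta_{n+1}$.
\item Take $K_{n+1}$ to be a filled-hull neighborhood of the shifted set inside the collar $W_{n+1}\setminus\overline{\operatorname{int}\eta_{n+1}}$. This collar is disjoint from $\overline{W_{n+1}'}$ and $L_{n+1}$.
\item Impose $f\approx 0$ on $K_{n+1}$ via \Cref{thm:nersesjan}, alongside $f\approx\Phi_n$ on $\overline{W_n'}$ with error below $\min\set{\e_n,R_{n+1},\rho_{n+1}}$. The condition $\abs{g^n(w_n)}^d\sup\abs{h(\overline{W_{n+1}},\overline{\DD})}<\frac12$ then yields \ref{prop:tame-perturbation-lemma-1-7}.
\end{itemize}
Your version fixes $\zeta_n$, the curves and the $K$'s before the $w_n$. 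It never ties the position of $\zeta_{n+1}$ and the tightness of $\Gamma_{n+1}$ to the direction and size of the shift, which is exactly what makes \ref{prop:tame-perturbation-lemma-1-5} achievable.
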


	\textit{Proof of Theorem~{\ref{thm:tame-perturbation}} (continued).}
    In \Cref{lemma:tame-perturbation-1}, property~\ref{prop:tame-perturbation-lemma-1-4} yields the sequence of wandering domains, whereas properties~\ref{prop:tame-perturbation-lemma-1-5} and \ref{prop:tame-perturbation-lemma-1-6} ensure the existence of points sufficiently close to the boundary to be pushed outside by the perturbation.
    Property~\ref{prop:tame-perturbation-lemma-1-7} forces these points to be mapped close to the origin in the next iteration. 

	Using \Cref{lemma:tame-perturbation-1}, we now proceed to show that the wandering domain $U$ of $f$ is non-bulging for
	\begin{equation*}
		F(z,w) = (f(z) + r(z,w), g(w)) = (f(z) + w^d h(z,w), g(w)).
	\end{equation*}
	We first show that
	\begin{equation}\label{eq:tame-perturbation-1}
		\pi_z F^{n+2}(\zeta_0, w_n) \in \DD
	\end{equation}
	for all $n\in\NN$. 
    Indeed, property~\ref{prop:tame-perturbation-lemma-1-6} applied inductively yields
	\begin{equation*}
		\pi_z F^{n}(\zeta_0, w_n) \in \overline{D(\zeta_n, 3 \rho_n)}.
	\end{equation*}
	Then, with property~\ref{prop:tame-perturbation-lemma-1-5},
	\begin{equation*}
		\begin{split}
			\pi_z F^{n+1}(\zeta_0, w_n) &= f(\pi_z F^{n}(\zeta_0, w_n)) + g^n(w_n)^d h(\pi_z F^{n}(\zeta_0, w_n), g^n(w_n)) \in K_{n+1}.
		\end{split}
	\end{equation*}
	Finally, property~\ref{prop:tame-perturbation-lemma-1-7} implies
	\begin{equation*}
		\pi_z F^{n+2}(\zeta_0, w_n) \in \DD.
	\end{equation*}
	
    Assume that $U$ is bulging.
    Then the iterates of $F$ form a normal family around $(\zeta_0, 0)$ since $\zeta_0 \in V_0 \subset U$ by \ref{prop:tame-perturbation-lemma-1-4} and \ref{prop:tame-perturbation-lemma-1-extra}. 
    In particular, there exist a compact neighborhood $K$ of $(\zeta_0, 0)$ and a subsequence $(F^{n_k + 2})_{k\in\NN}$ of iterates such that this subsequence either diverges uniformly to infinity on $K$ or converges uniformly to a finite limit map. 
	By property~\ref{prop:tame-perturbation-lemma-1-4}, we have $f^n(U) \subset W_n$ and therefore, $\min\abs{W_n} \to \infty$ implies $f^n(z) \to \infty$ as $n\to \infty$ for all $z \in U$.
    Thus, $(F^{n_k + 2})_{k\in\NN}$ has to diverge uniformly to infinity on $K$.
    In particular, there exists $J \in \NN$ with
    \begin{equation*}
        \norm{F^{n_{k} + 2}(z,w)} \geq 2
    \end{equation*}
    for all $k \geq J$ and all $(z,w) \in K$.
	Also, there exists $J'\in\NN$ such that $(\zeta_0, w_{n_k}) \in K$ for all $k \geq J'$ since $\lim_{k\to \infty} w_k = 0$.
	Lastly, let $J'' \in \NN$ be such that for all $k \geq J''$, we have $\abs{g^{n_k + 2}(w_{n_k})} \leq \abs{w_{n_k}} < 1$.
	Then, together with \eqref{eq:tame-perturbation-1}, we obtain for all $k \geq \max\set{J, J', J''}$,
	\begin{equation*}
		2 \leq \norm{F^{n_{k} + 2}(\zeta_0,w_{n_k})} \leq \abs{\pi_z F^{n_{k} + 2}(\zeta_0,w_{n_k})} + \abs{g^{n_k + 2}(w_{n_k})} < 1 + 1 = 2,
	\end{equation*}
	which is a contradiction. Hence, $U$ cannot be bulging.
\end{proof}
We now proceed to show \Cref{lemma:tame-perturbation-1}.
\begin{proof}[Proof of Lemma~{\ref{lemma:tame-perturbation-1}}]
	To construct the function $f$, we need several sequences of constants as provided by the following lemma. 
	\begin{lemma}\label{lemma:tame-perturbation-2}
		There exist sequences $(\zeta_n)_{n\in\NN}$ of complex numbers, $(\rho_n)_{n\in\NN}$ of positive real numbers, and $(w_n)_{n\in\NN}$ of nonzero complex numbers such that for all $n \geq 0$,
		\begin{enumerate}[label=(\roman*)]
			\item $\abs{w_n} < \min\set*{\delta_g, \frac{1}{n+1}}$, \label{prop:tame-perturbation-lemma-2-1}
			\item $\overline{D(\zeta_n, 3\rho_n)} \subset V_n$,\label{prop:tame-perturbation-lemma-2-2}
			\item $\overline{D(\zeta_{n+1}, 3\rho_{n+1})} + g^{n}(w_n)^d h(\overline{D(\zeta_n, 3 \rho_n)}, g^{n}(w_n)) \subset \paren*{W_{n+1} \setminus \overline{V_{n+1}}} \cap \mathbb H_{n+1}$, for an open half-plane $\mathbb H_{n+1}$ with $\mathbb H_{n+1} \cap \overline{V_{n+1}} = \emptyset$,\label{prop:tame-perturbation-lemma-2-3}
			\item $\abs{g^{k}(w_n)}^d \max_{z \in \overline{D(\zeta_k, 3\rho_k)}, w \in \overline{\DD}} \abs{h(z,w)} < \rho_{k+1}$ for all $k \in \set{0,\dots, n-1}$, \label{prop:tame-perturbation-lemma-2-4}
			\item $h(\overline{D(\zeta_n, 3\rho_n)}, \overline{D(0,\abs{g^n(w_n)})}) \subset \mathbb{H}\paren*{\frac{h(\zeta_n, 0)}{2},h(\zeta_n, 0)}$, \label{prop:tame-perturbation-lemma-2-5}  
			\item $h(\zeta_n, 0) \neq 0$, \label{prop:tame-perturbation-lemma-2-6}  
			\item $\abs{g^n(w_n)}^d \sup \abs{h(\overline{D(\zeta_n, 3\rho_n)}, \overline{D(0,\abs{g^n(w_n)})})} < \dist(\partial V_{n+1}, \partial W_{n+1})$, and \label{prop:tame-perturbation-lemma-2-7}  
			\item $ \abs{g^n(w_n)}^d \sup \abs{h(\overline{W_{n+1}}, \overline{\DD})} < \frac{1}{2} $. \label{prop:tame-perturbation-lemma-2-8} 
		\end{enumerate}
	\end{lemma}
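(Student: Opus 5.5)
We construct the three sequences recursively in $n$. At step $n$ the data $\zeta_0,\dots,\zeta_n$, $\rho_0,\dots,\rho_n$ and $w_0,\dots,w_{n-1}$ are already fixed. We then choose $w_n$, $\zeta_{n+1}$ and $\rho_{n+1}$, in that order. Since $h$ does not vanish identically on $\set{w=0}$, the function $h(\cdot,0)$ has only isolated zeros. Hence $\zeta_0$ can be picked in $V_0$ with $h(\zeta_0,0)\neq 0$, which gives~\ref{prop:tame-perturbation-lemma-2-6}. After shrinking $\rho_0$, we also get~\ref{prop:tame-perturbation-lemma-2-2}.

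Now fix $n$, with $\zeta_n,\rho_n$ chosen so that $h(\zeta_n,0)\neq 0$. We first shrink $\rho_n$ and impose a bound on $\abs{g^n(w_n)}$ so that~\ref{prop:tame-perturbation-lemma-2-5} holds. The half-plane $\mathbb{H}(h(\zeta_n,0)/2, h(\zeta_n,0))$ contains $h(\zeta_n,0)$ in its interior, so this follows from continuity of $h$ at $(\zeta_n,0)$. Shrinking $\rho_n$ is harmless, since conditions~\ref{prop:tame-perturbation-lemma-2-2} and~\ref{prop:tame-perturbation-lemma-2-4} for earlier indices only get easier. Next we take $w_n\neq 0$ with $\abs{w_n}<\min\set{\delta_g,1/(n+1)}$. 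Because $\abs{g(w)}<\abs{w}$ on the punctured disk $D(0,\delta_g)\setminus\set{0}$ and $g$ has no zeros there, $g^k(w_n)\neq 0$ and $\abs{g^k(w_n)}\le\abs{w_n}$ for all $k$. Moreover $g^k(w_n)\to 0$ uniformly as $w_n\to 0$. So, taking $w_n$ small, we obtain the finitely many open conditions~\ref{prop:tame-perturbation-lemma-2-1},~\ref{prop:tame-perturbation-lemma-2-4} (for $k<n$, whose right-hand sides $\rho_{k+1}>0$ are already fixed),~\ref{prop:tame-perturbation-lemma-2-7} and~\ref{prop:tame-perturbation-lemma-2-8}.

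The main step is~\ref{prop:tame-perturbation-lemma-2-3}. Set $v\coloneqq h(\zeta_n,0)$ and apply \Cref{lemma:outside-jordan} to $\Omega=V_{n+1}$ and the direction $v$. This yields $x\in\partial V_{n+1}$ with $\overline{V_{n+1}}\cap\mathbb{H}(x,v)=\emptyset$; we set $\mathbb{H}_{n+1}\coloneqq\mathbb{H}(x,v)$. Let $E\coloneqq g^n(w_n)^d h(\overline{D(\zeta_n,3\rho_n)}, g^n(w_n))$ be the perturbation set. By~\ref{prop:tame-perturbation-lemma-2-5}, $E$ lies in $g^n(w_n)^d\,\mathbb{H}(v/2,v)$, which is a half-plane not containing $0$ and bounded by a line at positive distance from $0$. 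The rotation factor $g^n(w_n)^d$ is a problem, because it rotates the direction $v$. I would handle it by defining $\mathbb{H}_{n+1}$ with the rotated direction $u\coloneqq g^n(w_n)^d v$ instead of $v$. In other words, we apply \Cref{lemma:outside-jordan} only after $w_n$ is fixed, with direction $u$. Then every $e\in E$ has a positive component $\re(e\bar u)/\abs{u}\ge c>0$ in direction $u$, and $\abs{e}<\dist(\partial V_{n+1},\partial W_{n+1})$ by~\ref{prop:tame-perturbation-lemma-2-7}.

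Now choose $\zeta_{n+1}\coloneqq x$ and take $\rho_{n+1}$ tiny compared with $c$. Then $\overline{D(\zeta_{n+1},3\rho_{n+1})}+E$ is pushed strictly into $\mathbb{H}(x,u)$, so it avoids $\overline{V_{n+1}}$. It also stays within distance $\dist(\partial V_{n+1},\partial W_{n+1})$ of $x\in\partial V_{n+1}$, so it lies in $W_{n+1}$; this gives~\ref{prop:tame-perturbation-lemma-2-3}. This choice conflicts with~\ref{prop:tame-perturbation-lemma-2-2} and~\ref{prop:tame-perturbation-lemma-2-6} at index $n+1$, which need $\zeta_{n+1}\in V_{n+1}$ with $h(\zeta_{n+1},0)\ne0$. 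This is the main obstacle. I would resolve it by moving $\zeta_{n+1}$ slightly from $x$ into $V_{n+1}$, to a point at distance less than $c/4$ from $x$ avoiding the isolated zeros of $h(\cdot,0)$. Then $\mathbb{H}(x,u)-c/2\cdot u/\abs u$ contains the shifted sets, while points of $\overline{V_{n+1}}$ still have $u$-component at most that of $x$, so disjointness from $\overline{V_{n+1}}$ survives. We then pick $\rho_{n+1}$ small enough for both~\ref{prop:tame-perturbation-lemma-2-2} and the margin, and translate $\mathbb{H}_{n+1}$ accordingly, keeping it disjoint from $\overline{V_{n+1}}$. This closes the induction.
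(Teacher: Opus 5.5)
Your proposal is correct in substance and follows the paper's own proof: the same inductive order, Lemma~\ref{lemma:outside-jordan} applied with the rotated direction $u=g^n(w_n)^d h(\zeta_n,0)$, and $\zeta_{n+1}$ chosen in $V_{n+1}$ close to the extremal point $x$ while avoiding the zeros of $h(\cdot,0)$.

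There is one slip in the last step. No backward translation of $\mathbb{H}_{n+1}$ is needed, and $\mathbb{H}(x,u)-\tfrac{c}{2}\tfrac{u}{\abs{u}}$ is not disjoint from $\overline{V_{n+1}}$ (it contains $x$). The margin $c$ exceeds the displacement plus $3\rho_{n+1}$, so the shifted set still lies in $\mathbb{H}(x,u)$ itself. After the shift, containment in $W_{n+1}$ is best obtained as in the paper, from $\overline{D(\zeta_{n+1},3\rho_{n+1})}\subset V_{n+1}$ together with (vii), since your bound ``within $\dist(\partial V_{n+1},\partial W_{n+1})$ of $x$'' is no longer guaranteed after moving by up to $c/4$.
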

	\begin{proof}[Proof of Lemma~{\ref{lemma:tame-perturbation-2}}]
		The sequences are constructed inductively.

		Choose any $\zeta_0 \in V_0$ such that $h(\zeta_0, 0) \neq 0$; hence, property~\ref{prop:tame-perturbation-lemma-2-6} holds. 
        Such an element exists since $z \mapsto h(z, 0)$ is not identically zero. 
        By continuity, there exist $\rho_0 > 0$ and $w_0 \in \CC\setminus\set{0}$ such that $h(\overline{D(\zeta_0, 3\rho_0)}, \overline{D(0,\abs{w_0})}) \subset \mathbb{H}(\frac{h(\zeta_0, 0)}{2},h(\zeta_0, 0))$, i.e., property~\ref{prop:tame-perturbation-lemma-2-5} holds. 
        Further shrinking $\rho_0$ guarantees that $\overline{D(\zeta_0, 3\rho_0)} \subset V_0$, yielding property~\ref{prop:tame-perturbation-lemma-2-2}, and further shrinking $w_0$, we obtain properties~\ref{prop:tame-perturbation-lemma-2-1}, \ref{prop:tame-perturbation-lemma-2-7}, and \ref{prop:tame-perturbation-lemma-2-8}.

        Suppose $\zeta_0, \dots, \zeta_n$, $\rho_0, \dots, \rho_n$, and $w_0, \dots, w_n$ have been constructed satisfying properties~\ref{prop:tame-perturbation-lemma-2-1} through \ref{prop:tame-perturbation-lemma-2-8}.
        We construct the next elements as follows.

		Using property~\ref{prop:tame-perturbation-lemma-2-5}, we first obtain
		\begin{equation*}
			g^{n}(w_n)^d h(\overline{D(\zeta_n, 3 \rho_n)}, g^{n}(w_n)) \subset \mathbb{H}\paren*{g^{n}(w_n)^d\frac{h(\zeta_n, 0)}{2},g^{n}(w_n)^d h(\zeta_n, 0)}.
		\end{equation*}
		Applying \Cref{lemma:outside-jordan} to the Jordan domain $V_{n+1}$ with $v \coloneqq g^{n}(w_n)^d h(\zeta_n, 0) \neq 0$ yields a point $x \in \partial V_{n+1}$ such that 
		\begin{equation*}
			\overline{V_{n+1}} \cap \mathbb{H}\paren*{x, g^{n}(w_n)^d h(\zeta_n, 0)} = \emptyset.
		\end{equation*}
        Set $\mathbb{H}_{n+1} \coloneqq \mathbb{H}\paren*{x, g^{n}(w_n)^d h(\zeta_n, 0)}$.
		Then, for any $z \in V_{n+1} \cap D\paren*{x, \abs*{\frac{g^{n}(w_n)^d h(\zeta_n, 0)}{4}}} \neq \emptyset$, we obtain
		\begin{equation*}
			z + g^{n}(w_n)^d h(\overline{D(\zeta_n, 3 \rho_n)}, g^{n}(w_n)) \subset \paren*{\CC \setminus \overline{V_{n+1}}} \cap \mathbb H_{n+1}.
		\end{equation*}
		Choose any $\zeta_{n+1} \in V_{n+1} \cap D\paren*{x, \abs*{\frac{g^{n}(w_n)^d h(\zeta_n, 0)}{4}}}$ with $h(\zeta_{n+1}, 0) \neq 0$.
		Thus, property~\ref{prop:tame-perturbation-lemma-2-6} holds.
		Furthermore, let $R > 0$ be small enough so that
		\begin{equation*}
			\overline{D(\zeta_{n+1}, 3R)} \subset D\paren*{x, \abs*{\frac{g^{n}(w_n)^d h(\zeta_n, 0)}{4}}}.
		\end{equation*}
		We now show that property~\ref{prop:tame-perturbation-lemma-2-3} holds for all $\rho_{n+1} \leq R$. We have already proven that
		\begin{equation*}
			\overline{D(\zeta_{n+1}, 3\rho_{n+1})} + g^{n}(w_n)^d h(\overline{D(\zeta_n, 3 \rho_n)}, g^{n}(w_n)) \subset  \paren*{\CC \setminus \overline{V_{n+1}}} \cap \mathbb H_{n+1}.
		\end{equation*}
		Using $\overline{D(\zeta_{n+1}, 3\rho_{n+1})} \subset V_{n+1}$ and property~\ref{prop:tame-perturbation-lemma-2-7} shows that
		\begin{equation*}
			\overline{D(\zeta_{n+1}, 3\rho_{n+1})} + g^{n}(w_n)^d h(\overline{D(\zeta_n, 3 \rho_n)}, g^{n}(w_n)) \subset W_{n+1}.
		\end{equation*}
		Taken together, these inclusions yield property~\ref{prop:tame-perturbation-lemma-2-3}.
		Lastly, we construct $\rho_{n+1}$ and $w_{n+1}$. By our choice of $\zeta_{n+1}$, we have $h(\zeta_{n+1},0)\neq 0$. By continuity, there exist $w_{n+1} \in \CC\setminus\set{0}$ and $\rho_{n+1} > 0$ so small that
		\begin{equation*}
			h(\overline{D(\zeta_{n+1}, 3\rho_{n+1})}, \overline{D(0,\abs{g^{n+1}(w_{n+1})})}) \subset \mathbb{H}\paren*{\frac{h(\zeta_{n+1}, 0)}{2},h(\zeta_{n+1}, 0)}.
		\end{equation*}
		Without loss of generality, we may assume that $\rho_{n+1} \leq R$, yielding properties~\ref{prop:tame-perturbation-lemma-2-3} and \ref{prop:tame-perturbation-lemma-2-5}.
		Further shrinking $\rho_{n+1}$ yields property~\ref{prop:tame-perturbation-lemma-2-2}, and further shrinking $w_{n+1}$ yields properties~\ref{prop:tame-perturbation-lemma-2-1}, \ref{prop:tame-perturbation-lemma-2-4}, \ref{prop:tame-perturbation-lemma-2-7}, and \ref{prop:tame-perturbation-lemma-2-8}.
	\end{proof}
	\textit{Proof of Lemma~{\ref{lemma:tame-perturbation-1}} (continued).}
	We use \Cref{lemma:tame-perturbation-2}.
    Set $\delta_0 \coloneqq 1$ and define for $n\in\NN$,
	\begin{equation*}
		\delta_{n+1} \coloneqq \dist(\partial V_{n+1}, \overline{D(\zeta_{n+1}, 3\rho_{n+1})} + g^{n}(w_n)^d h(\overline{D(\zeta_n, 3 \rho_n)}, g^{n}(w_n))).
	\end{equation*}
	It follows from property~\ref{prop:tame-perturbation-lemma-2-3} and
    the compactness of
	\begin{equation*}
		\overline{D(\zeta_{n+1}, 3\rho_{n+1})} + g^{n}(w_n)^d h(\overline{D(\zeta_n, 3 \rho_n)}, g^{n}(w_n))
	\end{equation*}
    that $\delta_{n+1} > 0$.

    We will use \Cref{thm:theorem-D} to obtain wandering domains. 
	Let $\xi \in \CC \setminus \bigcup_{k\in\NN} \overline{W_k}$ and choose $r_{\xi} > 0$ so small that $D \coloneqq D(\xi, r_{\xi}) $ satisfies $\overline{D}\subset \CC \setminus \bigcup_{k\in\NN} \overline{W_k}$.

	Let $\gamma_n \coloneqq \partial V_n$. 
    Choose any Jordan curve $\Gamma_n$ completely inside $W_n \setminus \overline{V_n}$ such that $\gamma_n \sim_{\frac{\delta_n}{2}} \Gamma_n$.
    Moreover, it holds that $\overline{V_n} \subset \operatorname{int}\Gamma_n$.
    Fix any $0 < \alpha_n < \delta_n$ with $\dist(\partial W_n, \Gamma_n) > \alpha_n$ and $\dist(\partial V_n, \Gamma_n) > \alpha_n$.
	Let $W_n'$ be the Jordan domain enclosed by $\Gamma_n$, i.e., let $W_n' \coloneqq \operatorname{int}\Gamma_n$.

	Since $\Gamma_n$ is a Jordan curve, the set $\operatorname{ext}(\Gamma_n) \cap (\Gamma_n + D(0,\e))$ is path connected for every $\e > 0$. 
    Thus, for every $0 < \beta_n < \alpha_n$, we find a Jordan curve $\eta_n$ in $\operatorname{ext}(\Gamma_n) \cap W_n$ such that
	\begin{itemize}
		\item $\Gamma_n \subset \operatorname{int} \eta_n$, and
		\item $\Gamma_n \sim_{\frac{\beta_n}{2}} \eta_n$.
	\end{itemize}
    Since $\eta_n([0,1])$ is compact and lies in the open set $\operatorname{ext}(\Gamma_n) \cap W_n$, the curve $\eta_n$ has positive distance to $\Gamma_n$ and $\partial W_n$.
	We may assume that $\beta_n = o(\alpha_n)$ as $n \to \infty$, e.g., by choosing $\beta_n < \frac{\alpha_n}{n}$.

	Parametrizing $\eta_n$ as a curve defined on $[0,1]$, we find $0 < \ell_n < 1$ such that $\Gamma_n \sim_{\beta_n} \eta_n|_{[\ell_n, 1]}$.
    Let $L_n \coloneqq \eta_n([\ell_n, 1])$.
    Then,
    \begin{equation*}
        \max\set{\dist(z, L_n) \mid z \in \Gamma_n} < \beta_n = o(\alpha_n),
    \end{equation*}
    and observe further that the set $L_n$ is compact and simply connected.

	Consider the Riemann maps $\rho_{W_n'} \colon W_n' \to \DD$ and $\rho_{W_n} \colon W_n \to \DD$ with $\rho_{W_n'}(\zeta_n) = 0$ and $\rho_{W_n}(\zeta_n) = 0$. 
    By Carathéodory's Theorem for the continuation of a Riemann map to the boundary (see \cite[Theorem 6.19]{Zakeri2021}), both maps admit a bijective extension onto the boundary which we denote by the same symbols.

	Since $\rho_{W_n'}(\overline{V_n})$ is a compact subset of the open unit disk, for every $s > 0 $ there exists $d_n = d_n(s) \in \NN$ large enough that $B_n \colon \overline{\DD} \to \overline{\DD}, z \mapsto z^{d_n} $ satisfies 
	\begin{equation*}
		B_n \circ \rho_{W_n'}(\overline{V_n}) \subset \overline{D(0,s)}.
	\end{equation*}
	In particular, since $\rho_{W_{n+1}}(D(\zeta_{n+1},\rho_{n+1}))$ is open and contains $0$, there exists $d_n \in \NN$ such that
	\begin{equation*}
		B_n \circ \rho_{W_n'}(\overline{V_n}) \subset \rho_{W_{n+1}}(D(\zeta_{n+1},\rho_{n+1}))
	\end{equation*}
	or equivalently
	\begin{equation*}
		\rho_{W_{n+1}}^{-1} \circ B_n \circ \rho_{W_n'} (\overline{V_n}) \subset D(\zeta_{n+1},\rho_{n+1}).
	\end{equation*}
	Consider $\Phi_n \colon \overline{W_n'} \to \overline{W_{n+1}}$,
	\begin{equation*}
		\Phi_n(z) \coloneqq \rho_{W_{n+1}}^{-1} \circ B_n \circ \rho_{W_n'}(z).
	\end{equation*}
	Then $\Phi_n$ is continuous on $\overline{W_n'}$ and holomorphic on the interior. Furthermore,
	\begin{itemize}
		\item $\Phi_n(\overline{V_n}) \subset D(\zeta_{n+1},\rho_{n+1}) \subset \overline{D(\zeta_{n+1},2\rho_{n+1})} \subset V_{n+1}$,
		\item $\Phi_n(\Gamma_n) = \partial W_{n+1}$. In particular, $\Phi_n(\Gamma_n)$ surrounds $\Gamma_{n+1}$.
	\end{itemize}
	Choose $0 < \e_n < 1$ such that for every entire function $f$ the inequality $\norm{f - \Phi_n}_{\overline{W'_n}} < \e_n$ implies
	\begin{itemize}
		\item $f(\overline{V_n}) \subset \overline{D(\zeta_{n+1},2\rho_{n+1})}$, and
		\item $f(\Gamma_n) \subset \partial W_{n+1} + D(0,\beta_{n+1})$ and it surrounds $ \Gamma_{n+1} $.
	\end{itemize}
    From $\eta_{n+1} \sim_{\beta_{n+1}/2} \Gamma_{n+1}$ and $\Gamma_{n+1} \sim_{\delta_{n+1}/2} \gamma_{n+1}$, it follows that $\eta_{n+1} \subset \partial V_{n+1} + D(0,\delta_{n+1})$.
    Using property~\ref{prop:tame-perturbation-lemma-2-3} and the definition of $\delta_{n+1}$, we obtain
	\begin{equation*}
		\overline{D(\zeta_{n+1}, 3\rho_{n+1})} + g^{n}(w_n)^d h(\overline{D(\zeta_n, 3 \rho_n)}, g^{n}(w_n)) \subset \operatorname{ext}(\eta_{n+1}).
	\end{equation*}
	Furthermore, since this set is compact, all of its holes are bounded, and since it lies in the half-plane $\mathbb{H}_{n+1}$ outside of $V_{n+1}$, none of its holes can contain $\eta_{n+1}$. 
    Hence, the filled hull $\widetilde{K}_{n+1}$ of this set, i.e., its union with all bounded components of its complement, is a compact simply connected set with $\widetilde{K}_{n+1} \subset W_{n+1} \setminus \overline{\operatorname{int}\eta_{n+1}}$. 
    Since $\widetilde K_{n+1}$ is compact, we can choose $R_{n+1} > 0$ so small that $\widetilde{K}_{n+1} + \overline{D(0,R_{n+1})}$ still lies in the half-plane $\mathbb{H}_{n+1}$ and in $W_{n+1} \setminus \overline{\operatorname{int}\eta_{n+1}}$. 
    By the same argument as above, its filled hull $K_{n+1}$ then also satisfies $K_{n+1} \subset W_{n+1} \setminus \overline{\operatorname{int}\eta_{n+1}}$.
	Let
	\begin{equation*}
		E \coloneqq \overline{D} \cup \bigcup_{n\in\NN} L_n \cup \bigcup_{n\in\NN} \overline{W'_n} \cup \bigcup_{n\in\NN} K_n.
	\end{equation*}
	This set satisfies all conditions of Nersesjan's Theorem~\ref{thm:nersesjan}.
	Define $\Psi \colon E \to \CC$,
	\begin{equation*}
		\Psi(z) \coloneqq \begin{cases}
			\xi & \text{if } z \in \overline{D} \cup \bigcup_{n\in\NN} L_n, \\
			\Phi_n(z) & \text{if } z \in \overline{W'_n} \text{ for some }n, \\
			0 & \text{if } z \in K_n \text{ for some }n,
		\end{cases}
	\end{equation*}
	which is continuous on $E$ and holomorphic on the interior of $E$, and the piecewise constant function $\e \colon E \to (0,\infty)$,
	\begin{equation*}
		\e(z) \coloneqq \begin{cases}
			\min\set*{\frac{r_{\xi}}{2}, \frac{1}{2}} & \text{if } z \in \overline{D} \cup \bigcup_{n\in\NN} L_n \cup \bigcup_{n\in\NN} K_n, \\
			\min\set{\e_n, R_{n+1}, \rho_{n+1}} & \text{if } z \in \overline{W'_n} \text{ for some }n.\\
		\end{cases}
	\end{equation*}
	By \Cref{thm:nersesjan}, there exists an entire function $f$ such that
	\begin{equation*}
		\abs{f(z) - \Psi(z)} < \e(z)
	\end{equation*}
	for all $z \in E$.
	
	This function $f$ satisfies conditions~\textit{\ref{prop:theorem-D-1}}--\textit{\ref{prop:theorem-D-6}} of \Cref{thm:theorem-D}.
	Indeed, conditions~\textit{\ref{prop:theorem-D-1}} and \textit{\ref{prop:theorem-D-2}} follow directly from our choices for the pertinent sets. 
    Condition~\textit{\ref{prop:theorem-D-3}} follows from our choice of $\e_n$ since we have
	\begin{equation*}
		f(\gamma_n) \subset f(\overline{V_n}) \subset \overline{D(\zeta_{n+1}, 2\rho_{n+1})} \subsetneq V_{n+1} = \operatorname{int}\gamma_{n+1}.
	\end{equation*}
	Similarly, \textit{\ref{prop:theorem-D-4}}, and even more directly \textit{\ref{prop:theorem-D-5}}, follow by our choice of the error function. 
    Lastly, \textit{\ref{prop:theorem-D-6}} follows from our choice of $\beta_n$. 
    The function $f$ has to be transcendental since no nonconstant polynomial can satisfy \textit{\ref{prop:theorem-D-5}}, and $f$ is nonconstant by \textit{\ref{prop:theorem-D-3}}.
	
	Consequently, \Cref{thm:theorem-D} yields that $f$ has simply connected wandering domains $U_n$ such that 
	\begin{equation*}
		\overline{V_n} = \overline{\operatorname{int}\gamma_n} \subset U_n \subset \operatorname{int}\Gamma_n = W_n' \subset W_n.
	\end{equation*}
	Hence, property~\ref{prop:tame-perturbation-lemma-1-4} holds.
	Property~\ref{prop:tame-perturbation-lemma-1-1} follows directly from property~\ref{prop:tame-perturbation-lemma-2-1}.
	Furthermore, note that by construction, for all $z \in \overline{D(\zeta_n, 3 \rho_n)}$,
	\begin{equation*}
		\Phi_n(z) + g^n(w_n)^d h(z, g^n(w_n)) \in \widetilde{K}_{n+1},
	\end{equation*}
	and 
	\begin{equation*}
		\abs{\Phi_n(z) - f(z)} < R_{n+1}.
	\end{equation*} 
	Together with the definition of $K_{n+1}$, property~\ref{prop:tame-perturbation-lemma-1-5} follows.

    From \ref{prop:tame-perturbation-lemma-2-4}, it follows that, for $n \in \NN$, $j > n$, and $z \in \overline{D(\zeta_n, 3\rho_n)}$,
    \begin{equation*}
        \begin{split}
            f(z) + g^{n}(w_j)^d h(z, g^{n}(w_j)) &\in f(\overline{V_n}) + g^{n}(w_j)^d h(z, g^{n}(w_j)) \\
            &\subset \overline{D(\zeta_{n+1}, 2\rho_{n+1})} + D(0,\rho_{n+1}) \\
            &\subset \overline{D(\zeta_{n+1}, 3\rho_{n+1})}.
        \end{split}
    \end{equation*}
    Hence, property~\ref{prop:tame-perturbation-lemma-1-6} holds.
    Moreover, observe that by \ref{prop:tame-perturbation-lemma-2-8}, for all $n \in \NN$ and $z \in K_{n+1}$,
    \begin{equation*}
        \begin{split}
            \abs{f(z) + g^{n+1}(w_n)^d h(z,g^{n+1}(w_n))} &\leq\abs{f(z)} + \abs{g^{n}(w_n)}^d\sup\abs{h(\overline{W_{n+1}}, \overline{\DD})} < \frac{1}{2} + \frac{1}{2} = 1,
        \end{split}
    \end{equation*}
    which is property~\ref{prop:tame-perturbation-lemma-1-7}.
    Lastly, property~\ref{prop:tame-perturbation-lemma-1-extra} follows from \ref{prop:tame-perturbation-lemma-2-2}.
    This completes the proof.
\end{proof}

\phantomsection
\label{Bibliography}
\bibliographystyle{amsalpha-doi}
\bibliography{main}

\end{document}